\documentclass{article}

\usepackage{amsmath}
\usepackage{amssymb}
\usepackage{booktabs}

\usepackage[amsmath,thref,hyperref,thmmarks]{ntheorem} % Max: Using ntheorem. 'thmmarks' handles automatic QED symbols, 'amsmath' fixes symbol placement in equations, and 'thref' enables \thref commands.
\usepackage{amscd}
\usepackage[all]{xy} % Max: Changed to 'xy' because 'xypic' is a deprecated alias that triggers warnings. '[all]' loads all diagram features.
\usepackage{url}
\usepackage[shortlabels]{enumitem} % Max: Added this package. Added 'shortlabels' for backward compatibility.
\usepackage{tensor}
\usepackage[usenames,dvipsnames]{color}

\usepackage{graphicx}     % Max: Added this, it's needed for \HobbyCurve
\usepackage{cellspace}    % Space under hline in table
\usepackage{adjustbox}    % Max: Added this, it's needed for \HobbyCurve
\usepackage{microtype}      % Max: This package makes the final PDF look much more professional (better spacing, etc.)
\usepackage{hyperref} % For clickable refs/citations. Needs to be loaded late.
\usepackage{fvextra}

\usepackage{placeins}

\usepackage{tikz}
\usetikzlibrary{calc}
\usetikzlibrary{positioning}
\usetikzlibrary{hobby}

\theoremstyle{plain}
\theorembodyfont{\slshape}
\newtheorem{theorem}{Theorem}[section]
\newtheorem{corollary}[theorem]{Corollary}
\newtheorem{lemma}[theorem]{Lemma}
\newtheorem{proposition}[theorem]{Proposition}

\theorembodyfont{\upshape}
\theoremheaderfont{\bfseries}
\newtheorem{definition}[theorem]{Definition}
\newtheorem{problem}{Problem}
\newtheorem{remark}[theorem]{Remark}

\theoremstyle{nonumberplain}      % Proofs are not numbered
\theoremheaderfont{\itshape}      % Header "Proof" is Italic
\theorembodyfont{\normalfont}     % Body text is normal (not slanted)
\theoremseparator{.}              % Dot after header
\theoremsymbol{\ensuremath{\Box}} % Box symbol at the end

\theorempostwork{%
  \setcounter{case}{0}%
  \setcounter{step}{0}%
}

\newtheorem{proof}{Proof}

\newcommand{\qedhere}{\hfill\proofSymbol\NoEndMark}
\theorempostwork{} 
\theoremsymbol{}

\theoremstyle{plain}
\theoremheaderfont{\scshape} 
\theorembodyfont{\upshape}

\theoremstyle{plain}
\theoremheaderfont{\itshape}
\theorembodyfont{\upshape}

\makeatletter
\DeclareRobustCommand\em{%
  \@nomath\em                      % Error if used in math mode
  \ifdim \fontdimen\@ne\font >\z@  % Check if current font is slanted/italic
    \upshape                       % If yes: switch to UPRIGHT
  \else
    \slshape                       % If no: switch to SLANTED
  \fi
}
\makeatother

\setlist[enumerate,1]{label=(\roman*)} % Max: This is the new 'enumitem' way to set the default label.
\setlist[description]{font=\normalfont\emph} % Max: Customizing 'description' lists. This makes the labels emphasized (\emph) instead of the default bold (\textbf).

\makeatletter
\renewcommand{\subsection}{\@startsection{subsection}{2}%
	{\z@}{-2.25ex plus -1ex minus-.2ex}{-1em}{\bf}}
\makeatother

\renewcommand{\arraystretch}{1.5}

\newcommand{\NN}{\mathbb{N}}

\newcommand{\QQ}{\mathbb{Q}}

\newcommand{\PP}{\mathbb{P}}

\DeclareMathOperator{\GL}{GL}
\DeclareMathOperator{\PGL}{PGL}
\DeclareMathOperator{\SL}{SL}
\DeclareMathOperator{\Gal}{Gal}
\DeclareMathOperator{\Spec}{Spec}

\DeclareMathOperator{\Proj}{Proj}

\DeclareMathOperator{\Aut}{Aut}

\DeclareMathOperator{\Id}{Id}
\DeclareMathOperator{\Char}{char} % Max: Safe version, overwriting \char causes errors.

\newcommand{\OO}{\mathcal{O}}

\newcommand{\X}{\mathcal{X}}

\newcommand{\M}{\mathcal{M}}

\newcommand{\Hc}{\mathcal{H}}

\newcommand{\Xb}{\bar{X}}

\newcommand{\phib}{\bar{\phi}}

\newcommand{\nr}{{\mathrm{nr}}}

\newcommand{\gen}[1]{\mathopen\langle#1\mathclose\rangle}
\newcommand{\abs}[1]{\lvert#1\rvert}

\newcommand{\lexp}[2]{\tensor[^#1]{#2}{}}

\input{curve_library.tex}

\newcommand{\LL}{\mathcal{L}}       % Line bundle L
\newcommand{\LLb}{\bar{\LL}}   % Reduction of line bundle L
\DeclareMathOperator{\Res}{Res}     % Residue map
\DeclareMathOperator{\Tr}{Tr}       % Trace map
\DeclareMathOperator{\Cl}{Cl}
\begin{document}

\title{Semistable reduction of smooth quartics%
\thanks{2020 Mathematics Subject Classification. Primary 14G20; Secondary 11G20, 14H25, 14H50, 14L24, 14Q25. Key words and phrases. Semistable reduction, plane quartics, stable curves, Geometric Invariant Theory, GIT-stable models, hyperelliptic reduction, cusp resolution.}}

\author{Max Schwegele \and Kletus Stern \and Stefan Wewers}

\date{\today}

\maketitle

\begin{abstract}
We develop a method for computing stable reduction of smooth plane quartics over discretely
valued fields, including residue characteristic \(p=2\).
The method uses the GIT-semistable plane models constructed in an earlier part
of this project, together with an intrinsic description of hyperelliptic stable
curves, to characterize when the stable model is obtained from a GIT-stable
plane model by resolving its cusps.	More precisely,
for a smooth non-hyperelliptic curve of genus \(3\) with semistable reduction,
we show that it admits a GIT-stable plane model if and only if its stable
reduction is non-hyperelliptic. In that case, the stable model is obtained from
the GIT-stable plane model by replacing each cusp by a \(1\)-tail. Together with the companion paper on explicit local stable resolution of cusps,
this gives an effective approach to computing stable reduction of smooth plane
quartics. The resulting algorithms are implemented in the SageMath package
\texttt{StabilityFunction}.
\end{abstract}

%\begin{abstract}
%We study semistable reduction of smooth plane quartic curves over discretely valued fields, including residue characteristic $p=2$.
%Building on~\cite{SternWewers}, we use GIT to compute plane models whose special fibers are close to stable, providing a practical route from an explicit equation to stable reduction in the non-hyperelliptic case.
%
%We prove that a smooth plane quartic admits a unique GIT-stable plane model if and only if its stable reduction is non-hyperelliptic.
%When this holds, the stable model is obtained from the GIT-stable plane model by a canonical local modification supported at the cusps of the special fiber.
%This description isolates the geometric mechanism behind the hyperelliptic/non-hyperelliptic dichotomy.
%The explicit cusp-resolution step is carried out in the companion paper~\cite{cusp_resolution} and is implemented in \cite{KletusGitHub}.
%\end{abstract}	

% !TeX root = reduction_of_plane_quartics.tex

\section*{Introduction}

Let $K$ denote a field which is complete with respect to a discrete valuation $v_K$. Let $\OO_K$ denote the ring of integers and $k$ the residue field of $v_K$. Let $X$ be a smooth and absolutely irreducible curve over $K$, of genus $g\geq 2$.

By the Semistable Reduction Theorem, there exists a finite extension $L/K$ and an integral model $\X$ of $X_L:=X\otimes_K L$ which is \emph{semistable} in the sense of \cite{DeligneMumford69}. We may assume that the extension $L/K$ is Galois, and that the tautological action of the Galois group $G:=\Gal(L/K)$ on $X_L$ extends to the semistable model $\X$, inducing an action on the special fiber $\X_s$. This is the case, for instance, if $\X$ is the {\em stable model} of $X_L$, which is the unique minimal semistable model.

\begin{problem} \label{prob:ss-red}
    For an explicitly given curve $X/K$, compute a suitable extension $L/K$, a semistable model $\X$ of $X_L$ and the action of $\Gal(L/K)$ on $\X_s$.
\end{problem}

See \cite{oberwolfach} for an overview of recent results. To summarize these results, fix a finite morphism $\phi:X\to\PP^1_K$ of degree $n$, and let $p$ be the characteristic of the residue field $k$. Then  we have practical algorithms for solving Problem \ref{prob:ss-red} in each of the following cases:
\begin{itemize}
\item
  The order of the geometric monodromy group of $\phi$ is prime to $p$. For instance, this is the case if $p>n$, or if $p=0$. In this case, the simplest method to compute the semistable reduction of $X$ is typically to use {\em admissible reduction}, see e.g.\ \cite{L-factors}. The case of hyperelliptic curves over $p$-adic number fields with $p\neq 2$ was also considered by Dokchitser et al. (\cite{ddmm}) and Gehrunger and Pink (\cite{gehrunger-pink_p>2}).
\item 
  We have $p=n$. A general approach in this case is given in \cite{OlesThesis}. The case of smooth quartics over $3$-adic number fields has been worked out explicitly in  \cite{OssenQuartic}. The basic method goes back to Lehr and Matignon (\cite{lehr-matignon}). The case of hyperelliptic curves over $2$-adic number fields was recently tackled by  Fiore-Yelton (\cite{fiore-yelton}) and by Gehrunger and Pink (\cite{gehrunger-pink}, \cite{gehrunger}). 
\end{itemize}

The simplest case not covered by any of these methods is that of smooth plane quartics over a field $K$ as above with residue characteristic $p=2$.
The goal of this article is to develop and implement a practical method for computing semistable reduction of smooth plane quartics over $p$-adic number fields,
including the case $p=2$, under the additional assumption of \emph{non-hyperelliptic reduction}.
More precisely, we reduce the computation of the stable model to two explicit steps:
first, computing a GIT-semistable plane model, and second, resolving the cuspidal singularities which may remain in the special fiber of the resulting model.
The first step is treated in the present article (building on \cite{SternWewers,KletusDiss}), while the second step is carried out in the companion paper
\cite{cusp_resolution}.

\smallskip
A key feature of our approach is that it does not start from a chosen cover $X\to\PP^1_K$.
Instead, it exploits projective embeddings (for non-hyperelliptic genus $3$, the canonical embedding $X\hookrightarrow\PP^2_K$) and GIT to control degeneration.
This viewpoint is flexible and suggests a route to explicit semistable reduction for non-hyperelliptic curves of higher genus in small residue characteristic,
via suitable projective models and their GIT geometry.

\subsection*{Plane quartics}

Let $K$ be as before, with algebraically closed residue field $k$ of positive characteristic $p$, and $X\subset \PP^2_K$ be a smooth plane quartic,
\[
X:\; F(x_0,x_1,x_2)=0,
\]
where $F\in K[x_0,x_1,x_2]$ is homogeneous of degree $4$. 
Note that $X$ can be realized as a degree-$4$-cover of $\PP^1_K$, by projection with center any $K$-rational point of $\PP^2_K\backslash X$. So for $p>3$ we can apply the method of admissible reduction, and this is then often the most efficient way to compute the stable reduction of $X$. If $X$ has a $K$-rational point (which can be achieved by passing to a finite extension of $K$) then projecting from this point yields a degree-$3$-map to $\PP^1_K$. For $p=3$ we can then use the results of \cite{OssenQuartic}. But for $p=2$ the available methods, which rely on the single choice of a morphism $\phi:X\to\PP^1_K$, are not applicable.

The method that we propose here works for all primes $p$, and succeeds if and only if the semistable reduction of $X$ is {\em not} hyperelliptic. It relies crucially on the results of \cite{SternWewers} and \cite{hyperelliptic}. The idea is to first, as an intermediate step, find a {\em plane integral model}  which is close to the stable model.
More precisely, we use the results of \cite{SternWewers} to find a finite extension $L/K$ and a plane integral model $\X_0\subset\PP^2_{\OO_L}$ of $X_L$ (the Zariski closure of $X_L\subset\PP^2_L$, after a suitable linear change of coordinates) whose special fiber $\X_{0,s}\subset \PP^2_k$ is {\em semistable in the sense of Geometric Invariant Theory}. We call such a model $\X_0$ a {\em GIT-semistable model}. The philosophy underlying this two-step approach is strongly inspired by the work of Lercier et al. (\cite{LLLR}, \cite{vanBommelDockingLercierLorenzoGarcia2025}), which investigates the reduction of plane quartics via GIT invariants, although the methods and results used in the present article are logically independent.

At this point we encounter a crucial dichotomy. In the first case, the plane model $\X_0$ is actually {\em GIT-stable}. For plane quartics, this means that $\X_{0,s}$ is reduced and has at most nodes and cusps as singularities (plane singularities of type $A_1$ or $A_2$). By the general theory, $\X_0$ is then the unique GIT-semistable model of $X_L$. We show that, after enlarging the extension $L/K$ further, there exists a modification
\[
     \pi:\X\to\X_0
\]
such that $\X$ is the stable model of $X_L$. In fact, $\pi$ is an isomorphism except over the cusps of $\X_{0,s}$, and the exceptional divisors are precisely the {\em $1$-tails} of the stable curve $\X_s$.

The existence of the modification $\pi:\X\to\X_0$ is a consequence of the Semistable Reduction Theorem; in fact, $\X$ is the \emph{stable hull} of $\X_0$,
see \cite{liu2006stable}. What is needed for computations is an explicit construction of $\pi$ and of the finite extension required for its existence.
This second stage---the explicit resolution of the cusps of $\X_{0,s}$ by weighted blow-ups, producing the $1$-tails of the stable special fiber---is the
subject of the companion paper \cite{cusp_resolution}.

\smallskip
\noindent
\emph{Implementation.}
The algorithms of the present article and of \cite{cusp_resolution} are implemented in the SageMath~\cite{SageMath} package
\href{https://github.com/kst3rn/StabilityFunction}{\tt StabilityFunction} (\cite{KletusGitHub}).
The cusp resolution routine is contained in
\href{https://github.com/kst3rn/StabilityFunction/blob/main/semistable_model/curves/cusp_resolution.py}{\tt cusp\_resolution.py};
the main routine for computing stable reduction of smooth plane quartics is contained in
\href{https://github.com/kst3rn/StabilityFunction/blob/main/semistable_model/curves/stable_reduction_of_quartics.py}{\tt stable\_reduction\_of\_quartics.py}.

\medskip
In the second case, $\X_0$ is GIT-semistable but not GIT-stable and then our method fails. Here we show that this happens if and only if $X$ has {\em hyperelliptic reduction}, i.e.\ the special fiber $\X_s$ of the stable model lies, as a point on $\overline{\M}_3$, in the closure of the hyperelliptic locus. In this case, the relationship between GIT-semistable plane models and the stable model is considerably more subtle, and it is unclear how our basic strategy can be adapted. We hope to consider this problem in a future article.

\vspace{4ex}
The article is organized as follows.
In Section \ref{sec:stable_curves} we recall the basic notions of stable curves of genus $3$ and review the classification of their combinatorial types,
with particular emphasis on hyperelliptic versus non-hyperelliptic reduction.
In Section \ref{sec:plane_models} we study plane models of smooth quartics and recall the relevant GIT-stability criteria, leading to the notions of GIT-stable and GIT-semistable plane models.
We also recall the results of \cite{SternWewers}, which lead to an algorithm for computing GIT-semistable models of plane curves, and their implementation
(\cite{KletusDiss}, \cite{KletusGitHub}).

In Section \ref{sec:contraction} we establish the precise relationship between Deligne--Mumford stable reduction and GIT-stability for plane quartics, and prove the main theoretical result
explaining the dichotomy between non-hyperelliptic versus hyperelliptic reduction. More precisely, we show that non-hyperelliptic stable reduction is equivalent to the
existence of a GIT-stable plane model, while hyperelliptic reduction necessarily leads to strictly semistable behavior in the GIT sense. The proof relies on a detailed
analysis of dualizing sheaves, canonical maps of singular curves, and the contraction of $1$-tails, and isolates the exact geometric mechanism behind this dichotomy.

Finally, in \S\ref{subsec:explicit_cusp_resolution} we explain how the results of this article connect to the explicit construction of the stable model in the
non-hyperelliptic case: starting from a GIT-stable plane model, one resolves the cusps of the special fiber by an explicit weighted blow-up procedure.
The details of this resolution step are given in the companion paper \cite{cusp_resolution}.
We end the paper with explicit examples in Section~\ref{sec:examples}, including a brief demonstration of our implementation \cite{KletusGitHub}.

The theoretical foundations of this article were first developed in the first named author's master's thesis~\cite{MaxMaster}; the results on hyperelliptic stable curves are cited here in their refined form in~\cite{hyperelliptic}.

% !TeX root = reduction_of_plane_quartics.tex

\section{Stable curves of genus \texorpdfstring{$3$}{3}} \label{sec:stable_curves}
 
In this section we recall the basic terminology for stable pointed curves and summarize results from \cite{hyperelliptic}. Our main focus is to clarify the notion of a {\em hyperelliptic} stable curve and to revisit the classification of stable curves of genus 
$g=3$ from this perspective.

\subsection{Basic definitions}

In this section we fix an algebraically closed field $k$.

\begin{definition} \label{def:semistable_curve}
    \begin{enumerate}
        \item
        An algebraic curve $C$ over $k$ is called {\em semistable} if it is connected, projective, reduced, and its only singularities are ordinary double points.
        \item
        Let $C$ be a semistable curve over $k$. A {\em marking} of $C$ is a finite set $S$ of closed, smooth points of $C$. The pair $(C,S)$ is called a {\em marked semistable curve}, or an {\em $r$-marked semistable curve} if $r=\abs{S}$. We do not distinguish between a semistable curve and a $0$-marked semistable curve.
        \item
        Let $(C,S)$ be a marked semistable curve and $C_1,\ldots,C_n$ its irreducible components. The {\em special points} on $C_i$ are the elements of $S\cap C_i$ and the points where $C_i$ meets one of the other components.\footnote{Note that nodes where a component intersects itself do not count as special points.} Let $g_i$ denote the arithmetic genus of $C_i$ and $r_i$ the number of special points on $C_i$. We call $(C,S)$ {\em stably marked} if  $2g_i-2+r_i >0$, for all $i$.
    \end{enumerate}
\end{definition}

\begin{remark}
    Note that the number $2g_i-2+r_i$ is precisely the degree of the restriction of the log-canonical sheaf $\omega_{C/k}(S)$ to the component $C_i$. Consequently, the condition of being stably marked is equivalent to $\omega_{C/k}(S)$ being ample. It is also equivalent to the finiteness of the automorphism group $\Aut(C,S)$ (see \cite[Chapter X, 3]{ACG}).
\end{remark}

Given a semistable marked curve $(C,S)$, we define its {\em dual graph} $\Gamma=\Gamma(C,S)$ as the following marked, weighted graph:
\begin{itemize}
    \item
    The set of vertices is the set $V=\{C_1,\ldots,C_n\}$ of irreducible components.
    \item
    To each node $x\in C$ we associate an edge $e_x$ connecting the two components $C_i,C_j$ which meet in $x$ ($i=j$ is allowed).
    \item
    To each marked point $p\in S$ we associate a half edge $e_p$ which is connected to the component on which $p$ lies.
    \item
    The weight of a vertex is the geometric genus $g_i$ of the component $C_i$ to which it corresponds.
\end{itemize}

Then the arithmetic genus of $C$ is given by the following formula:
\[
g(C) = \sum_i g_i + h^1(\Gamma),
\]
where $h^1(\Gamma)$ denotes the first Betti number of the graph. The isomorphism class of this dual graph is called the \emph{combinatorial type} of the curve $(C,S)$.

\subsection{Hyperelliptic stable curves}

\begin{definition} \label{def:hyperelliptic}
    A stably marked curve $(C,S)$ is called {\em hyperelliptic} if there exists an involution $\sigma:C\to C$, $\sigma^2=\Id_C$, with the following properties:
    \begin{enumerate}
        \item
        $\sigma$ fixes every point in $S$.
        \item
        If $\Char(k)\neq 2$, then $\sigma$ has only isolated fixed points.
        \item
        The quotient curve $C/\gen{\sigma}$ has arithmetic genus zero.
    \end{enumerate}
\end{definition}

The involution $\sigma$, if it exists, is unique (see \cite[Theorem 4.5]{hyperelliptic}) and is called the \emph{hyperelliptic involution} of $(C,S)$. This definition is motivated by the following theorem.

\begin{theorem}[{{\cite[Theorem 5.5]{hyperelliptic}}}] \label{thm:hyperelliptic_locus}
    Let $C$ be a stable curve over $k$ of genus $g\geq 2$. Then $C$ is hyperelliptic if and only if the corresponding point on $\overline{\M}_g$ lies in the closure of the hyperelliptic locus $\Hc_g\subset\M_g$.
\end{theorem}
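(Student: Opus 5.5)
We prove both directions by comparing a stable curve with an admissible double cover in the sense of Harris--Mumford (and its analogue in characteristic $2$), viewed as the limit of hyperelliptic double covers of $\PP^1$.

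For the direction ``closure of $\Hc_g$ implies hyperelliptic'', let $C$ lie in $\overline{\Hc_g}$. We choose a complete discrete valuation ring $R$ with fraction field $F$, possibly after a finite extension, and a stable curve $\mathcal{C}/R$ such that the generic fibre $\mathcal{C}_F$ is smooth hyperelliptic and the special fibre is $C$. The hyperelliptic involution $\sigma_F$ of $\mathcal{C}_F$ extends to $\mathcal{C}$, because the stable model is unique and therefore functorial for automorphisms of the generic fibre. Its restriction $\sigma$ to $C$ is an involution.
\begin{enumerate}
\item The quotient $\mathcal{C}/\gen{\sigma}$ is a flat family whose generic fibre is $\PP^1_F$. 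Since arithmetic genus is constant in flat proper families, $C/\gen{\sigma}$ has arithmetic genus zero. Here we must check that taking the quotient commutes with passing to the special fibre, at least up to a finite birational map; in characteristic $2$ this needs care.
\item If $\Char(k)\neq 2$, the fixed locus of $\sigma$ is the specialization of the fixed locus of $\sigma_F$, which is finite and étale of degree $2g+2$. A component of $C$ fixed pointwise would force a divisorial fixed locus in $\mathcal{C}$, which is impossible when $2$ is invertible, since then the fixed scheme of an involution is smooth over $R$.
\end{enumerate}
Hence $C$ satisfies Definition~\ref{def:hyperelliptic}.

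For the converse, let $\sigma$ be a hyperelliptic involution on $C$ and set $D=C/\gen{\sigma}$, a tree of rational curves. We mark on $D$ the images of the branch points and of the nodes. We then deform the pair $(C,\sigma)$ equivariantly: we smooth each node compatibly with $\sigma$ and deform the $\PP^1$-tree $D$ with its markings to a smooth $\PP^1$. When $\Char(k)\neq 2$ we can do this with admissible covers. One shows that $(C,\sigma)$, after stabilization, is the special fibre of an admissible double cover of a marked stable genus-$0$ curve, and that such covers smooth out. We can also argue via deformation theory: $\mathrm{Ext}^1(\Omega_C,\OO_C)^{\sigma}$ surjects onto the local smoothing parameters of the nodes. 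Both routes realize $C$ as a limit of smooth hyperelliptic curves, so $C$ lies in $\overline{\Hc_g}$.

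The main obstacle is characteristic $2$. There $\sigma$ may fix entire components, the covers are wildly ramified, and neither the admissible-cover machinery nor the tame fixed-point arguments apply. In that case we would give an explicit local analysis instead. At each node $xy=0$ we write down the $\sigma$-action, for example $x\leftrightarrow y$ or $(x,y)\mapsto(x+\ldots,y+\ldots)$, and lift it to an equivariant smoothing $xy=t$. We then use that the unobstructedness of equivariant deformations of $C$ comes from $H^2$ vanishing on curves. Once every node has an equivariant smoothing, we glue them using formal GAGA. Finally we verify that the generic fibre is a double cover of a genus-$0$ curve, which follows from step (i) applied in reverse.
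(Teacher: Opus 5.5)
\textbf{There is a genuine gap: characteristic $2$ is not handled in either direction.} The paper does not prove this statement; it imports it from \cite[Theorem~5.5]{hyperelliptic}, so your proposal has to stand on its own. Away from characteristic $2$ your outline is the standard argument and is essentially fine. The Reynolds operator makes the quotient commute with base change, and the fixed scheme is smooth over $R$ on the smooth locus. For the converse, either admissible double covers or the surjection of $\mathrm{Ext}^1(\Omega_C,\OO_C)^{\sigma}$ onto the $\sigma$-invariant local smoothing parameters does the job. But Definition~\ref{def:hyperelliptic} treats characteristic $2$ separately (condition (ii) is dropped), and that is where the theorem goes beyond the classical picture.

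\textbf{Forward direction.} Your fallback ``at least up to a finite birational map'' is false. Suppose $\sigma$ fixes a component $Z$ of $C$ pointwise. Then the inertia group at the generic point of $Z$ is all of $\gen{\sigma}$. Since $C$ is reduced, a uniformizer of $R$ has valuation $1$ there, which forces ramification index $e=1$. Hence the residue extension is purely inseparable of degree $2$, and $C/\gen{\sigma}\to(\mathcal{C}/\gen{\sigma})_s$ restricts to a Frobenius on $Z$. This case genuinely occurs: for type \texttt{0ppp} in characteristic $2$ (hyperelliptic by Theorem~\ref{thm:hyperelliptic_classification_g3}), the involution must fix the three attachment points on the core $\PP^1$, so it is the identity there.

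The conclusion $p_a(C/\gen{\sigma})=0$ can be rescued, but it needs an argument you do not give. First, the same valuation count shows $Y_s:=(\mathcal{C}/\gen{\sigma})_s$ is generically reduced. Being a Cartier divisor on a normal surface, it is then reduced, so it is a connected reduced curve of arithmetic genus $0$, i.e.\ a tree of smooth rational curves. Second, $C/\gen{\sigma}\to Y_s$ is a finite bijection. Third, the quotient of a node by an involution is a node or a smooth point in every characteristic. Together these show that $C/\gen{\sigma}$ is a nodal tree of $\PP^1$'s.

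\textbf{Converse direction.} In characteristic $2$ your sketch does not carry out the real work.
\begin{enumerate}
\item Unobstructedness of $\sigma$-equivariant deformations does not follow from vanishing of $H^2$ on curves when $2$ is not invertible. Taking invariants is no longer exact, and group cohomology of $\gen{\sigma}$ enters the obstruction theory. The local--global principle of Bertin--M\'ezard only reduces the problem to local equivariant deformation problems at the wildly fixed points, which you would still have to solve. That framework also presupposes isolated fixed points, which fails exactly on components fixed pointwise.
\item The local equivariant smoothings $xy=t$ are asserted, not constructed. You need $\sigma(x)\sigma(y)=t$ and $\sigma^2=\Id$ simultaneously, and this is a real condition: already for $\Char(k)\neq 2$ the action $(x,y)\mapsto(-x,y)$ admits no equivariant smoothing.
\item ``Step (i) applied in reverse'' fails for the reason above. $Y_s$ is dominated by $C/\gen{\sigma}$ via a finite bijection, so it can only be more singular or inseparable. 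Hence $p_a(C/\gen{\sigma})=0$ gives no control over the genus of the generic quotient. Nothing in your argument rules out a smoothing whose generic involution has a quotient of positive genus.
\end{enumerate}
What is missing is a construction that deforms the double cover $C\to C/\gen{\sigma}$ together with its genus-$0$ target. Then the generic fibre is a double cover of $\PP^1$ by construction, rather than by an appeal to constancy of the quotient's genus, which fails here.
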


To discuss hyperelliptic stable curves, it is convenient to recall the following combinatorial terminology from \cite{hyperelliptic}.

\begin{definition} \label{def:comb_term}
Let $C$ be a semistable curve over $k$.
\begin{enumerate}
    \item A node $p$ is \emph{separating} if the partial normalization of $C$ at $p$ disconnects the curve. (Equivalently, the corresponding edge in the dual graph is a bridge.)
    \item A pair of distinct nodes $\{p, q\}$ is a \emph{separating pair} if the partial normalization of $C$ at $\{p, q\}$ disconnects the curve, but the partial normalization at either $p$ or $q$ individually does not. (Equivalently, the corresponding edges form a minimal 2-edge-cut in the dual graph.)
    \item The curve $C$ is \emph{inseparable}\footnote{This terminology is inspired by \cite{ran2014canonical}. This geometric notion should not be confused with the algebraic concept of an inseparable field extension or morphism.} if it has no separating nodes, and is \emph{separable} otherwise. It is \emph{2-inseparable} if it is inseparable and contains no separating pairs. (Combinatorially, these conditions correspond to the dual graph being 2-edge-connected and 3-edge-connected, respectively.)
\end{enumerate}
\end{definition}

\subsection{Classification of stable curves of genus \texorpdfstring{$3$}{3}}

We now restrict our attention to stable curves of genus $g=3$ and apply the results of \cite{hyperelliptic} to make the condition of hyperellipticity fully explicit in this case. It is well known that there are precisely $42$ different combinatorial types of stable curves of genus $g=3$. We refine this classification using the notion of the {\em core} and {\em 1-tails}, shedding some new light on the distinction between hyperelliptic and non-hyperelliptic curves.

\begin{definition}
    Let $C$ be a stable curve over $k$, and let $E$ be an irreducible component of $C$.
    \begin{enumerate}
        \item $E$ is called a \emph{1-tail} if it is attached to the rest of the curve at a single node and has an arithmetic genus of $1$.
        \item An \emph{elliptic tail} is a 1-tail that is smooth (i.e., has geometric genus $1$).
        \item A \emph{pigtail}\footnote{This is an actual term used in the literature; see, for instance, \cite[p. 122, 175]{harris1998moduli}.} is a 1-tail that has a single node (i.e., has geometric genus $0$).
    \end{enumerate}
    The node at which a 1-tail $E$ is attached is called the \emph{attachment point} of $E$.
\end{definition}

For stable curves of genus $g=3$, a simple argument using the additivity of the arithmetic genus shows that separating nodes are in one-to-one correspondence with 1-tails. More precisely:

\begin{lemma}[{{\cite[Lemma 2.35]{MaxMaster}}}] \label{lem:core_tail_lemma}
    Let $C$ be a stable curve over $k$ of genus $3$, and let $r$ be the number of 1-tails of $C$. Then $0 \leq r \leq 3$. The curve has exactly $r$ separating nodes, which are precisely the attachment points of the 1-tails. The partial normalization of $C$ at these nodes results in the disjoint union of $r$ 1-tails and an inseparable curve $C_c$ of genus $g(C_c) = 3 - r$.
\end{lemma}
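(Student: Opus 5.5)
The plan is to work on the dual graph $\Gamma$ of $C$, where separating nodes are exactly bridges, and to use additivity of arithmetic genus under normalization at a separating node. If $x$ is a separating node, the partial normalization at $x$ splits $C$ into two connected semistable curves $A$ and $B$ with $g(A)+g(B)=g(C)=3$, since removing a bridge does not change $h^1(\Gamma)$ and the vertex weights are distributed between the two sides. Stability forces $g(A),g(B)\geq 1$. Indeed, if say $g(A)=0$, then $A$ is a tree of smooth rational curves, and a leaf component of this tree (or $A$ itself if irreducible) has at most two special points in $C$: one from the tree and possibly the point $x$. This contradicts $2g_i-2+r_i>0$. Hence $\{g(A),g(B)\}=\{1,2\}$.

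Next I would show that the genus-$1$ side of any separating node is a single irreducible component, namely a $1$-tail. Let $A$ be that side. It is a connected semistable curve of arithmetic genus $1$, and its components carry at most one extra special point, namely $x$. If $A$ had more than one component, then either $\Gamma_A$ is a tree with one vertex of weight $1$ and the rest rational, or $h^1(\Gamma_A)=1$ with all vertices rational. In the tree case, a leaf of the tree other than the component containing $x$ is rational (or one of two leaves is), and it has fewer than $3$ special points. In the cycle case, the rational components on the cycle have $2$ special points each, plus at most one more. A component not containing $x$ then fails stability, unless the cycle has length one, which means $A$ is irreducible. Rational trees hanging off the cycle again give bad leaves. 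So $A$ is irreducible of arithmetic genus $1$, attached at exactly one node. It is therefore a $1$-tail, and $x$ is its attachment point. Conversely, the attachment node of a $1$-tail is a bridge by definition. This gives the bijection between separating nodes and $1$-tails. Distinct $1$-tails are distinct components, and each determines its own attachment node, so the bijection is well defined.

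Finally, normalize at all $r$ separating nodes. By the argument above the components are the $r$ tails and one remaining connected piece $C_c$. The tails are disjoint because two tails meeting each other would form a genus-$2$ side with nothing else, forcing $g=2$. The piece $C_c$ is connected because each tail hangs off a single node, and $C_c$ is nonempty since otherwise the total genus would be $r\leq 2$ with the tails meeting each other. Additivity of genus gives $g(C_c)=3-r$, so $r\leq 3$. $C_c$ is inseparable because its edges are not bridges in $\Gamma$, and removing leaf tails does not create bridges. The step I expect to need the most care is the irreducibility of the genus-$1$ side, specifically the leaf-counting that kills rational components. There is also the degenerate case $r=3$: there $C_c$ has genus $0$ with $3$ special points, so it is a single $\PP^1$. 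This is allowed, and inseparability of $C_c$ is vacuous in that case.
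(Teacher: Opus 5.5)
Your argument is correct and is essentially the proof the paper has in mind. The paper only remarks that the correspondence follows from additivity of arithmetic genus and defers the details to \cite[Lemma 2.35]{MaxMaster}; your splitting $g(A)+g(B)=3$ at a bridge, combined with leaf-counting from the stability inequality $2g_i-2+r_i>0$, is exactly that argument. The one step worth an extra line is injectivity of the bijection: the genus-$1$ side of the attachment node of a $1$-tail $E$ is $E$ itself, so the maps between separating nodes and $1$-tails are mutually inverse.
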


This observation motivates the following definition.

\begin{definition}
    For a stable curve $C$ over $k$ of genus $3$, we call the curve $C_c$ from \thref{lem:core_tail_lemma} the \emph{core} of $C$.
\end{definition}

We use the naming convention of \cite{octads}:
\begin{itemize}
    \item A number (\texttt{0}, \texttt{1}, \texttt{2}, or \texttt{3}) indicates an irreducible component of that \emph{geometric} genus.
    \item The letter \texttt{n} appended to a genus number (e.g., \texttt{0n}) signifies a self-intersection (a node) on that component.
    \item The letter \texttt{e} denotes an \emph{elliptic tail} attached to the previously mentioned component.
    \item The letter \texttt{m} denotes a \emph{pigtail} (from ``multiplicative reduction'') attached to the previously mentioned component.
    \item Special characters describe intersections between two components: \texttt{=} for two intersection points, \texttt{-{}-{}-} for three, and \texttt{-{}-{}-{}-} for four.
    \item \texttt{Z} is a convenient shorthand for the common configuration \texttt{0=0}, a binary curve with two nodes.
    \item Two exceptional graphs have special names, following the notation in \cite{Ciani1}: \texttt{CAVE} and \texttt{BRAID}.
\end{itemize}

Using the terminology of 1-tails and the core, the classification of all 42 combinatorial types of stable curves of genus $3$ can be formulated as follows.

\begin{proposition}[Classification of stable curves of genus $3$] \label{prop:classification_g=3}
  There are exactly $42$ combinatorial types of stable curves of genus $3$. They may be classified according to the properties of their core, as follows:
\begin{description}
    \item Core is 2-inseparable (27 cases)
    \begin{description}
        \item Core is irreducible (20 cases)
        \par
        \vspace{0.5em}
        \centerline{%
        \begin{tabular}{Sc|Sc|Sc|Sc}
        \small0 1-tails (4) & \small1 1-tail (6) & \small2 1-tails (6) & \small3 1-tails (4) \\
        \hline
        \begin{tabular}{@{}cc@{}}
            \HobbyCurve{3} & \HobbyCurve{2n} \\[-0.80em]
            \footnotesize\texttt{3} & \footnotesize\texttt{2n} \\[-0.25em]
            \HobbyCurve{1nn} & \HobbyCurve{0nnn} \\[-0.80em]
            \footnotesize\texttt{1nn} & \footnotesize\texttt{0nnn}
        \end{tabular}
        &
        \begin{tabular}{@{}cc@{}}
            \HobbyCurve{2e} & \HobbyCurve{2m} \\[-0.80em]
            \footnotesize\texttt{2e} & \footnotesize\texttt{2m} \\[-0.25em]
            \HobbyCurve{1ne} & \HobbyCurve{1nm} \\[-0.80em]
            \footnotesize\texttt{1ne} & \footnotesize\texttt{1nm} \\[-0.25em]
            \HobbyCurve{0nne} & \HobbyCurve{0nnm} \\[-0.80em]
            \footnotesize\texttt{0nne} & \footnotesize\texttt{0nnm}
        \end{tabular}
        &
        \begin{tabular}{@{}cc@{}}
            \HobbyCurve{1ee} & \HobbyCurve{1me} \\[-0.80em]
            \footnotesize\texttt{1ee} & \footnotesize\texttt{1me} \\[-0.25em]
            \HobbyCurve{1mm} & \HobbyCurve{0nee} \\[-0.80em]
            \footnotesize\texttt{1mm} & \footnotesize\texttt{0nee} \\[-0.25em]
            \HobbyCurve{0nme} & \HobbyCurve{0nmm} \\[-0.80em]
            \footnotesize\texttt{0nme} & \footnotesize\texttt{0nmm}
        \end{tabular}
        &
        \begin{tabular}{@{}cc@{}}
            \HobbyCurve{0eee} & \HobbyCurve{0mee} \\[-0.80em]
            \footnotesize\texttt{0eee} & \footnotesize\texttt{0mee} \\[-0.25em]
            \HobbyCurve{0mme} & \HobbyCurve{0mmm} \\[-0.80em]
            \footnotesize\texttt{0mme} & \footnotesize\texttt{0mmm}
        \end{tabular}
        \end{tabular}
        }

        \item Core is reducible (7 cases)
            \par
            \centerline{%
            \renewcommand{\arraystretch}{1.2}
            \begin{tabular}{Sc|Sc}
            \small0 1-tails (5) & \small1 1-tail (2) \\
            \hline
            \begin{tabular}{@{}c@{}}
                \setlength{\tabcolsep}{1pt}
                \begin{tabular}{ccccc}
                \HobbyCurve{1---0} & \HobbyCurve{0---0n} & \HobbyCurve{0----0} & \HobbyCurve{CAVE} & \HobbyCurve{BRAID} \\[-0.60em]
                \footnotesize\texttt{1-{}-{}-0} & \footnotesize\texttt{0-{}-{}-0n} & \footnotesize\texttt{0-{}-{}-{}-0} & \footnotesize\texttt{CAVE} & \footnotesize\texttt{BRAID}
                \end{tabular}
            \end{tabular}
            &
            \begin{tabular}{@{}c@{}}
                \begin{tabular}{cc}
                \HobbyCurve{0---0e} & \HobbyCurve{0---0m} \\[-0.60em]
                \footnotesize\texttt{0-{}-{}-0e} & \footnotesize\texttt{0-{}-{}-0m}
                \end{tabular}
            \end{tabular}
            \end{tabular}
            }
    \end{description}

    \item Core is 2-separable (15 cases)
    \begin{description}
        \item Core has 2 components (10 cases)
            \par
            \vspace{0.5em}
            \centerline{%
            \renewcommand{\arraystretch}{1.2}
            \begin{tabular}{Sc|Sc|Sc}
            \small0 1-tails (3) & \small1 1-tail (4) & \small2 1-tails (3) \\
            \hline
            \begin{tabular}{@{}c@{}}
                \setlength{\tabcolsep}{3pt}
                \begin{tabular}{ccc}
                \HobbyCurve{1=1} & \HobbyCurve{1=0n} & \HobbyCurve{0n=0n} \\[-0.50em]
                \footnotesize\texttt{1=1} & \footnotesize\texttt{1=0n} & \footnotesize\texttt{0n=0n}
                \end{tabular}
            \end{tabular}
            &
            \begin{tabular}{@{}c@{}}
                \setlength{\tabcolsep}{2pt}
                \begin{tabular}{cccc}
                \HobbyCurve{1=0e} & \HobbyCurve{1=0m} & \HobbyCurve{0n=0e} & \HobbyCurve{0n=0m} \\[-0.50em]
                \footnotesize\texttt{1=0e} & \footnotesize\texttt{1=0m} & \footnotesize\texttt{0n=0e} & \footnotesize\texttt{0n=0m}
                \end{tabular}
            \end{tabular}
            &
            \begin{tabular}{@{}c@{}}
                \setlength{\tabcolsep}{3pt}
                \begin{tabular}{ccc}
                \HobbyCurve{0e=0e} & \HobbyCurve{0m=0e} & \HobbyCurve{0m=0m} \\[-0.50em]
                \footnotesize\texttt{0e=0e} & \footnotesize\texttt{0m=0e} & \footnotesize\texttt{0m=0m}
                \end{tabular}
            \end{tabular}
            \end{tabular}
            }

        \item Core has at least 3 components (5 cases)
            \par
            \vspace{0.5em}
            \centerline{%
            \renewcommand{\arraystretch}{1.2}
            \begin{tabular}{Sc|Sc}
            \small0 1-tails (3) & \small1 1-tail (2) \\
            \hline
            \begin{tabular}{@{}c@{}}
                \setlength{\tabcolsep}{3pt}
                \begin{tabular}{ccc}
                \HobbyCurve{Z=1} & \HobbyCurve{Z=0n} & \HobbyCurve{Z=Z} \\[-0.50em]
                \footnotesize\texttt{Z=1} & \footnotesize\texttt{Z=0n} & \footnotesize\texttt{Z=Z}
                \end{tabular}
            \end{tabular}
            &
            \begin{tabular}{@{}c@{}}
                \begin{tabular}{cc}
                \HobbyCurve{Z=0e} & \HobbyCurve{Z=0m} \\[-0.50em]
                \footnotesize\texttt{Z=0e} & \footnotesize\texttt{Z=0m}
                \end{tabular}
            \end{tabular}
            \end{tabular}
            }
    \end{description}
\end{description}
The geometric genus of each component ($0$, $1$, $2$, or $3$) is indicated by its line thickness, from thinnest to thickest.
\end{proposition}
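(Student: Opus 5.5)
The plan is to enumerate stable genus-3 curves through their core, using \thref{lem:core_tail_lemma}. A stable curve $C$ of genus $3$ with $r$ 1-tails is obtained by gluing $r$ 1-tails to an inseparable curve $C_c$ of genus $3-r$. Each tail is either elliptic (\texttt{e}) or a pigtail (\texttt{m}). After gluing, the core together with the marked attachment points must be a stably marked curve $(C_c,S)$ with $|S|=r$; conversely, every such datum yields a stable curve of genus $3$. The attachment points are distinct smooth points of $C_c$, since separating nodes are exactly the attachment points. Hence the combinatorial types correspond bijectively to pairs (combinatorial type of an inseparable stably $r$-marked curve of genus $3-r$, multiset of tail types), where the tail types are distributed over the markings up to automorphisms of the marked dual graph.

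First I handle the small cores. For $r=3$, the core is a genus-$0$ curve with $3$ markings. It is inseparable only if it is a single $\PP^1$, since a tree of rational curves with more than one component has bridges. Distributing \texttt{e}/\texttt{m} over the three points up to symmetry gives $4$ cases. For $r=2$, the core is an inseparable genus-$1$ curve with $2$ markings. The possibilities are: irreducible (smooth \texttt{1}, or nodal \texttt{0n}), each with $3$ tail distributions; or a $2$-cycle of rational curves, one marking on each, giving \texttt{0=0} with $3$ tail distributions. A $2$-cycle carrying both markings on one component is excluded because the other component would be unstable. These are $9$ cases. For $r=1$, the core is an inseparable genus-$2$ curve with $1$ marking. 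I enumerate its irreducible types (\texttt{2}, \texttt{1n}, \texttt{0nn}), its two-component types, and its three-component types. I then check which are stable after the marking and inseparable, multiplying by $2$ for the tail type. For $r=0$, I use the standard list of inseparable stable genus-$3$ curves.

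Next I sort the cores into 2-inseparable versus 2-separable, and then by number of components. Cores with at least 3 components are cycles through \texttt{Z}. The cores \texttt{1---0}, \texttt{0---0n}, \texttt{0----0}, \texttt{CAVE} and \texttt{BRAID} have 3-edge-connected graphs. For each candidate graph I compute $h^1$ and the vertex weights, and check the stability inequality $2g_i-2+r_i>0$ at every vertex. This gives the counts in each block of the table: $4+6+6+4=20$, $5+2=7$, $3+4+3=10$, and $3+2=5$, totalling $42$.

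The main obstacle is proving completeness of the $r=0$ and $r=1$ core enumerations. These are exactly the inseparable (2-edge-connected) stable graphs of genus $3$ and $2$, respectively. I would bound the size of the graph first: stability forces at most $2g-2=4$ components, with equality only for trivalent weight-$0$ graphs. I would then enumerate 2-edge-connected multigraphs with loops by the number of vertices, at most $4$, assigning weights so that $\sum g_i+h^1=3$. Trivalent cubic graphs on $4$ vertices give \texttt{CAVE} (the doubled-edge ring) and \texttt{BRAID} ($K_4$), and the remaining bridged one is discarded. On $3$ vertices the survivors are \texttt{Z=1}, \texttt{Z=0n} and their tailed variants, after discarding those with bridges. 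Finally I would cross-check the total of $42$ against the known count of strata of $\overline{\M}_3$.
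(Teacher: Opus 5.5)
Your strategy is exactly the direct core/1-tail argument the paper defers to (\cite[Proposition~2.38]{MaxMaster}; the paper's own proof is only a citation): split off the $1$-tails, enumerate inseparable stably $r$-marked cores of genus $3-r$, and decorate the markings with \texttt{e}/\texttt{m}. Your cases $r=3$ ($4$ types) and $r=2$ ($9$ types) are correct, and so is your plan for $r=1$ ($14$ types). The gap is in the step you yourself call the main obstacle: the tailless cores with at least three components. There your enumeration misidentifies \texttt{CAVE} and as a result misses a type.

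\texttt{CAVE} is not a trivalent graph on four vertices. It is the three-vertex weight-$0$ graph with double edges $AB$, $AC$ and a single edge $BC$ (valences $4,3,3$; the conic-plus-two-lines configuration), and it is $3$-edge-connected. The ``doubled-edge ring'' on four vertices (a $4$-cycle with two opposite edges doubled) is \texttt{Z=Z}. It is $2$-separable, since its two single edges form a separating pair, so your own $3$-edge-connectivity check would fail for the graph you call \texttt{CAVE}. There is also no ``remaining bridged one''. The loopless cubic multigraphs on four vertices are exactly $K_4$ and this ring, and the three trivalent graphs with loops (\texttt{Z=0m}, \texttt{0m=0m}, \texttt{0mmm}) all have bridges. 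Your lists are the ring and $K_4$ on four vertices, and \texttt{Z=1} and \texttt{Z=0n} on three. These contain only four of the five tailless types with at least three components. Carried out as described, the enumeration gives $14$ tailless types and $41$ in total, so the block counts $5$ and $3$ you quote do not follow from it. Also, ``tailed variants'' has no meaning when $r=0$.

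To repair this, use $\sum_v(2g_v-2+\mathrm{val}_v)=4$, with valence counting loops twice and every term $\geq 1$. Note that a vertex with $\mathrm{val}_v\leq 1$ is isolated or sits on a bridge, so it is excluded.

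\begin{enumerate}
\item On four vertices every term equals $1$, so every vertex is trivalent of weight $0$. The bridgeless graphs are then \texttt{BRAID} and \texttt{Z=Z}.
\item On three vertices the terms are $(2,1,1)$. The data are therefore either all weights $0$ with valences $(4,3,3)$, or weights $(1,0,0)$ with valences $(2,3,3)$. Solve for the edge multiplicities and discard the bridged solutions: \texttt{0-{}-{}-0m}, \texttt{0n=0m}, \texttt{0nmm}, \texttt{1=0m} and \texttt{1mm}, which are counted under $r\geq 1$. What remains is exactly \texttt{CAVE}, \texttt{Z=0n} and \texttt{Z=1}.
\end{enumerate}

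This gives $4+6+3+2=15$ tailless types and $15+14+9+4=42$ overall. Your cross-check against the known count for $\overline{\M}_3$ would have flagged the discrepancy, but it cannot replace this enumeration.
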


\begin{proof}
The diagrams in the classification above are adapted from \cite[Figure 2.2]{octads}, with slight modifications made using the Hobby-Editor tool \cite{hobbyeditor}. The enumeration of all stable types of a genus $g$ curve is a well-known combinatorial problem (see for example \cite[Theorem 2.2.12]{chan2013tropical}). A direct proof for $g=3$ using the core and 1-tail decomposition is given in \cite[Proposition 2.38]{MaxMaster}.
\end{proof}

We can now leverage the results from \cite{hyperelliptic} to describe the precise conditions under which each combinatorial type is hyperelliptic. For a compact notation, we introduce some terminology to refer to similar cases at once. 
We use the wildcard \texttt{$\ast$=$\ast$} for the 15 combinatorial types with
$2$-separable core. Furthermore, since it is convenient not to distinguish whether a 1-tail is an elliptic tail (\texttt{e}) or a pigtail (\texttt{m}), we replace both with the placeholder \texttt{p}. For example, \texttt{2p} stands for either of the combinatorial types \texttt{2e} or \texttt{2m}. For a curve with $2$-inseparable core, this notation may also be read as notation for the corresponding pointed core
$(C_c,\{p_1,\ldots,p_r\})$.

\begin{theorem} \label{thm:hyperelliptic_classification_g3}
	Let $C$ be a stable curve of genus $3$. The possibility of $C$ being
	hyperelliptic depends on its combinatorial type and on the characteristic of
	the base field $k$ as follows:
	\begin{center}
		\normalfont
		\renewcommand{\arraystretch}{1.3}
		\begin{tabular}{@{}ll@{}}
			\toprule
			\textbf{Hyperellipticity} & \textbf{Combinatorial Types} \\
			\midrule
			Always hyperelliptic & \texttt{$\ast$=$\ast$} \\
			Never hyperelliptic & \texttt{1---0}, \texttt{0---0n}, \texttt{CAVE}, \texttt{BRAID}, \texttt{0---0p} \\
			Hyperelliptic iff $\Char(k) = 2$ & \texttt{0ppp} \\
			Both possible (never if $\Char(k)=2$) & \texttt{0npp} \\
			Both possible & \texttt{3}, \texttt{2n}, \texttt{1nn}, \texttt{0nnn}, \texttt{2p}, \texttt{1np}, \\
			& \texttt{0nnp}, \texttt{1pp}, \texttt{0----0} \\
			\bottomrule
		\end{tabular}
	\end{center}
	In particular, if $C$ is non-hyperelliptic, then its core $C_c$ is
	$2$-inseparable.
\end{theorem}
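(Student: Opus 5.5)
The plan is to derive the table type by type from the general structural results of \cite{hyperelliptic}, organized by the core/1-tail decomposition of \thref{lem:core_tail_lemma}. The basic tool is the reduction to the pointed core. A hyperelliptic involution $\sigma$ of $C$ must preserve each 1-tail and fix each attachment point, since it is unique and hence compatible with the combinatorics, and separating nodes are fixed because the quotient has genus $0$. On a 1-tail $E$, an involution fixing the attachment point and having quotient of arithmetic genus $0$ always exists: it is $P\mapsto -P$ on an elliptic curve, and the analogous involution on a nodal cubic. So, as I expect is proved in \cite{hyperelliptic}, $C$ is hyperelliptic if and only if the pointed core $(C_c,\{p_1,\ldots,p_r\})$ is hyperelliptic in the sense of Definition~\ref{def:hyperelliptic}, i.e.\ there is an involution of $C_c$ fixing all the $p_i$ with genus-$0$ quotient. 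This reduces everything to small pointed cores of genus $3-r$.

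\textbf{2-separable cores.} I would invoke the result of \cite{hyperelliptic} that the hyperelliptic involution swaps, or at least stabilizes, separating pairs and that a curve with a separating pair admits a natural involution on each side fixing the two nodes. Concretely, for each side $Y$ of the separating pair with the two node preimages marked, $Y$ has genus $\le1$ with two marked points, or is a chain \texttt{Z}. In each such case I would exhibit an involution swapping or fixing the two points so that the quotient is rational: the elliptic involution centered at the midpoint, $z\mapsto -z$ on $\PP^1$ for \texttt{0n} with the node points and marked points arranged suitably, and so on. Gluing these gives hyperelliptic $C$ for all 15 types.

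\textbf{Cores with few components.} For the never-hyperelliptic types \texttt{1---0}, \texttt{0---0n}, \texttt{CAVE}, \texttt{BRAID} and \texttt{0---0p}, I would argue that an involution with genus-$0$ quotient must act on a rational component meeting the rest in $\ge3$ points. It must either fix those points, which forces the identity on $\PP^1$ when combined with the other constraints, or permute them, which contradicts the genus count of the quotient; the quotient graph would retain a cycle. For \texttt{0ppp}, the core is $\PP^1$ with three marked points, and an involution of $\PP^1$ fixing three points exists only in characteristic $2$. In that case a wild involution $z\mapsto z+1$ fixes only $\infty$, but in characteristic $2$ the identity is permitted by Definition~\ref{def:hyperelliptic}(ii). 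I would check which one \cite{hyperelliptic} intends: with $\sigma=\Id$ on the core and the tail involutions elsewhere, the quotient still has genus $0$. For \texttt{0npp}, the core is $\PP^1$ with one node pair $\{a,b\}$ and two marked points. An involution fixing both marked points and swapping $a,b$ exists generically in odd characteristic, depending on the cross-ratio, whereas in characteristic $2$ an involution of $\PP^1$ has only one fixed point, which rules it out. For the remaining ``both possible'' types, I would exhibit a hyperelliptic and a non-hyperelliptic example by moduli counting.

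The main obstacle is the careful characteristic-$2$ analysis, especially for \texttt{0ppp}, \texttt{0npp} and \texttt{0----0}, where the allowed form of $\sigma$ (non-isolated fixed points, wild involutions) changes the answer. The final sentence of the statement then follows directly by reading off the table: every non-hyperelliptic type has a 2-inseparable core.
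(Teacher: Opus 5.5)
Your reduction to the pointed core and your explicit involutions for the fifteen \texttt{$\ast$=$\ast$} types take a different but workable route; the paper instead cites \cite[Lemma~4.3]{hyperelliptic}, which contracts each side of the separating pair to a stable genus-$2$ curve, and every such curve is hyperelliptic. One correction there: your involution must \emph{swap} the two nodes of the separating pair, since fixing both leaves a cycle in the quotient. The genuine gap is in the ``never hyperelliptic'' row. Your argument only treats an involution that preserves a rational component meeting the rest in $\ge 3$ points. It ignores involutions that interchange components, and these are exactly what must be excluded for \texttt{CAVE}, \texttt{BRAID} and \texttt{0---0p}. As written, the argument would equally show that \texttt{0----0} is never hyperelliptic, which is false: if some isomorphism between the two rational components matches the four nodes, swapping the components gives a hyperelliptic involution fixing all nodes. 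The missing idea is an edge-cut argument. The quotient $C_c/\sigma$ is a tree of $\PP^1$'s. For each node $q$ of this tree, every node of $C_c$ joining components over the two sides of $q$ maps to $q$, and $q$ has at most two preimages. So each such $q$ gives an edge cut of size $\le 2$ in the dual graph of $C_c$. For a $2$-inseparable core the quotient must therefore be irreducible. Then $C_c$ is either irreducible or consists of two components interchanged by $\sigma$ with every connecting node fixed, since a moved node would create a node in the quotient. Among the reducible genus-$3$ cores this leaves only \texttt{0----0}. For \texttt{0---0p}, the marked attachment point forbids the interchange. This is the content the paper imports from \cite[Theorem~4.6, Proposition~4.7]{hyperelliptic}.

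There are also smaller errors. For \texttt{0npp} with $\Char(k)\neq 2$ (including characteristic $0$, not only odd characteristic), the required involution does not exist generically. With $p_1=0$, $p_2=\infty$ it must be $z\mapsto -z$, so it exists if and only if $b=-a$. This is a codimension-one harmonic cross-ratio condition, which is why the type is ``both possible''. In characteristic $2$ you must also rule out $\sigma|_{C_c}=\Id$, which you admitted for \texttt{0ppp}. It fails condition (iii), because the self-node survives in the quotient, which then has genus $1$. For \texttt{0ppp}, the only automorphism of $\PP^1$ fixing three points is the identity in every characteristic; the point is that it is admissible only when $\Char(k)=2$. Moduli counting only produces the non-hyperelliptic examples in the ``both possible'' row; the hyperelliptic ones must be constructed. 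Examples are a Weierstrass point for \texttt{2p}, or, for \texttt{1pp} in characteristic $2$, an ordinary elliptic curve with $p_2-p_1$ of order $2$. Finally, $\sigma$ preserves each $1$-tail not because $\sigma$ is unique, but because interchanging two tails would put a genus-$1$ component into the quotient.
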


\begin{proof}
	In genus $3$, a stable curve has at most one separating pair, and hence is of semicompact type in the sense of \cite[Definition~2.3(v)]{hyperelliptic}, which is a necessary property for a stable curve to be hyperelliptic. In this case, \cite[Theorem~4.6]{hyperelliptic} tells exactly when a curve is hyperelliptic; for us, \cite[Lemma~4.3]{hyperelliptic} is useful in the $2$-separable core case.
	
	If the core is $2$-separable, the decomposition of the curve in the sense of \cite[Lemma~4.3]{hyperelliptic} at the separating pair gives two contracted components of exactly genus $2$. Since every stable curve of genus $2$ is hyperelliptic, it follows by the lemma that the curve itself is hyperelliptic. Equivalently, all curves of type \texttt{$\ast$=$\ast$} are hyperelliptic. This proves the first row and, by contraposition, the final assertion.
	
	It remains to consider the case where the core is $2$-inseparable. By
	Lemma~\ref{lem:core_tail_lemma}, the $2$-inseparable components (in the sense of \cite[Theorem~4.6]{hyperelliptic}) are the pointed
	core $(C_c,\{p_1,\ldots,p_r\})$ and the $r$ $1$-tails. The $1$-tails are
	hyperelliptic as pointed genus-$1$ components, so
	\cite[Theorem~4.6]{hyperelliptic} reduces the question to the pointed core. The
	table is then the genus-$3$ specialization of
	\cite[Theorem~4.6 and Proposition~4.7]{hyperelliptic}. The exceptional
	\texttt{0ppp} row is explained by \cite[Example~4.9]{hyperelliptic}, and the
	characteristic-$2$ exclusion in the \texttt{0npp} row by
	\cite[Example~4.10]{hyperelliptic}. The rows marked ``both possible'' depend on
	additional geometric conditions on the pointed core, not only on the
	combinatorial type.
\end{proof}

Below in \S\ref{subsec:construction_of_GIT_model}, rather than using these
additional geometric conditions case by case, we will use the pinching criterion
of \cite[Definition~4.12, Theorem~4.13]{hyperelliptic}.

% !TeX root = reduction_of_plane_quartics.tex

\section{Plane models of smooth quartics} \label{sec:plane_models}

In this section we discuss a different notion of (semi)stability of curves. It only applies to plane curves and is directly derived from Mumford's Geometric Invariant Theory (GIT). 

\subsection{Plane curves} \label{subsec:plane_curves}

Let $k$ be a field. By a {\em plane curve} of degree $d$ over $k$ we mean a hypersurface
\[
     X = V_+(F)\subset\PP^2_k,
\]
where $F\in W:=k[x,y,z]_d$, $F\neq 0$, is a homogeneous ternary form of degree $d$. We consider such plane curves up to projective equivalence. More formally, we identify the isomorphism class of the plane curve $X$ with the orbit of the homothety class $[F]\in\PP(W)$ under the natural action of $\PGL_3(k)$.

\emph{Geometric invariant theory} (GIT) distinguishes among orbits of $\PGL_3(k)$ on $\PP(W)$ according to their stability properties (see \cite{MumfordGIT}). An orbit may be stable, semistable, or unstable, where \emph{stable} implies \emph{semistable}, and \emph{unstable} simply means \emph{not semistable}. We also use \emph{strictly semistable} meaning semistable but not stable. For our purposes we do not need the general definitions, since in the case of plane quartics the distinction can be expressed by a simple geometric criterion (see Proposition~\ref{prop:semistable} at the end of this subsection). To avoid confusion with the notion of \emph{stable curves} from \S\ref{sec:stable_curves}, we shall say that a plane curve $X=V_+(F)\subset\PP^2_k$ is \emph{GIT-stable} (resp.\ \emph{GIT-semistable}, etc.) if the orbit of $[F]$ has the corresponding property.

\vspace{2ex}
Before stating the geometric criterion for GIT-(semi)stability we recall some basic terminology for plane curve singularities. Let \(X\subset\PP^2_k\) be a plane curve and \(P\in X\) a closed point. We say that \(P\) is an {\em \(n\)-fold point} of \(X\) if the multiplicity of \(X\) at \(P\) equals \(n\). In particular, an {\em ordinary double point} (or {\em node}) is a double point whose tangent cone consists of two distinct lines.

Since cusps will play an important role below, we record a precise definition, valid for any algebraic curve (not necessarily a plane curve).

\begin{definition}
    Let \(X\) be an algebraic curve over \(k\) (a scheme of finite type and pure dimension one over \(k\)), and \(P\in X\) a closed point. We call \(P\) a {\em cusp} if:
    \begin{enumerate}
        \item \(\widehat{\OO}_{X,P}\) is a domain (equivalently, \(X\) is analytically irreducible at \(P\); in particular, unibranch and generically reduced at \(P\));
        \item the {\em value semigroup} at \(P\) is \(\langle 2,3\rangle=\{0,2,3,4,5,\ldots\}\).
    \end{enumerate}
    Here the value semigroup is
    \[
    \Gamma_P:=\{\nu(f):0\neq f\in\widehat{\OO}_{X,P}\}\subset\NN_0,
    \]
    where $\nu$ is the discrete valuation on the function field of $X$ corresponding to the unique branch at $P$.
\end{definition}

It is well known that \(P\) is a cusp in this sense if and only if
\[
\widehat{\OO}_{X,P}\;\cong\;k[[x,y]]/(y^2-x^3),
\]
i.e. the singularity is analytically isomorphic to the plane curve \(y^2=x^3\) (valid in arbitrary characteristic); see, e.g., \cite{Campillo,Wall}. Within the ADE classification for plane curve singularities, nodes are precisely type \(A_1\) and cusps are type \(A_2\); see \cite{GreuelKroening}.

\vspace{2ex}

For our main result we will need to know that we can bring any cusp into a certain {\em normal form}. For a more precise statement, we use the following notation. Given a plane curve $X=V_+(F)\subset\PP^2_k$ of degree $d$, we write the equation $F\in k[x,y,z]_d$ as
\[
F = \sum_{i+j\leq d} \, a_{i,j} x^iy^jz^{d-i-j},
\]
with $a_{i,j}\in k$. Of course, this presentation depends on the chosen coordinate system $(x,y,z)$ which, as explained above, we are allowed to change by any projective linear transformation given by an element of $\PGL_3(k)$.

\begin{lemma} \label{lem:cusp_normal_form}
    Let $X\subset\PP^2_k$ be a plane curve of degree $d\geq 3$ and $P\in X$ a cusp. Then there exists a coordinate system $(x,y,z)$ for $\PP^2_k$ such that $P=(0:0:1)$ and
    \begin{equation}
        a_{i,j} = 0 \quad\text{for $2i+3j<6$}, \quad a_{3,0}=-1,\; a_{0,2}=1.
    \end{equation}
\end{lemma}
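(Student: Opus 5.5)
Since $k$ may have any characteristic (including $2$ and $3$), we avoid completing squares or cubes and use only linear coordinate changes together with the semigroup characterization of the cusp. We assume $k$ algebraically closed, as in the standing convention for special fibers; otherwise we pass to $\bar k$, since the cusp condition is geometric. The condition $a_{i,j}=0$ for $2i+3j<6$ kills the monomials $1,x,y,x^2,xy$, i.e.\ the terms $z^d, xz^{d-1}, yz^{d-1}, x^2z^{d-2}, xyz^{d-2}$. The normalization $a_{3,0}=-1$, $a_{0,2}=1$ says that the weighted-degree-$6$ part (weights $2,3$ on $x,y$) of the dehomogenized equation $f(x,y)=F(x,y,1)$ is exactly $y^2-x^3$. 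The only weight-$6$ monomials are $x^3$ and $y^2$, so the statement amounts to: in suitable affine coordinates at $P$, $f$ has no terms of weight $<6$ and its weight-$6$ part is $y^2-x^3$.

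\textbf{Step 1.} Move $P$ to $(0:0:1)$ by an element of $\PGL_3(k)$. Then $a_{0,0}=0$.

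\textbf{Step 2.} A cusp has multiplicity $2$: the maximal ideal has minimal valuation $2$, since $2=\min\Gamma_P\setminus\{0\}$. Hence $a_{1,0}=a_{0,1}=0$ and the quadratic part $q$ of $f$ is nonzero.

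\textbf{Step 3.} We claim $q$ is a square of a linear form, i.e.\ the tangent cone is a single line. If $q$ factored into two distinct linear forms, $P$ would be a node with two branches, contradicting analytic irreducibility. This is the main technical point: it is cleanest to argue that a multiplicity-$2$ point with reduced quadratic part has $\widehat{\OO}_{X,P}\cong k[[u,v]]/(uv)$ (Hensel's lemma applied to the factorization of the leading form, valid in all characteristics), which is not a domain. Because $k$ is algebraically closed, after a linear change in $x,y$ we get $q=c\,y^2$ with $c\neq0$. Then $a_{2,0}=a_{1,1}=0$, so all terms with $2i+3j<6$ vanish.

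\textbf{Step 4.} The cubic part is $a_{3,0}x^3+a_{2,1}x^2y+\dots$. We must show $a_{3,0}\neq0$. If $a_{3,0}=0$, then every monomial of $f$ other than the leading part $cy^2$ has weight $\ge 7$ in the weights $(2,3)$ ($x^4$ has weight $8$, $x^2y$ weight $7$, etc.). Along the branch, $\nu(x)=2$ and $\nu(y)\ge 3$; more precisely, $\nu(y)$ must be $3$ because $3\in\Gamma_P$ and the elements of valuation $2$ and $3$ must come from $x$ and $y$ modulo $\mathfrak m^2$. Then $\nu(cy^2)=6$ while all other terms have valuation $\ge7$, so $f$ cannot vanish identically on the branch, a contradiction.

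An alternative formulation: if $a_{3,0}=0$, then $y^2 \mid$ (weight-$\le 6$ part), and a direct Newton-polygon argument shows the singularity is of type $A_{\ge3}$ or worse, with semigroup different from $\langle2,3\rangle$. I would present whichever is shorter, citing the $A_2$ characterization quoted above.

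\textbf{Step 5.} Rescale $x\mapsto \alpha x$, $y\mapsto\beta y$, $z\mapsto z$ and multiply $F$ by a scalar to achieve $a_{0,2}=1$ and $a_{3,0}=-1$. This is possible because $k$ is algebraically closed: we need $\lambda c\beta^2=1$ and $\lambda a_{3,0}\alpha^3=-1$, and taking $\lambda=1$ we can choose $\alpha,\beta$ by extracting roots. These diagonal changes preserve the vanishing established in Steps 1--3.

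The main obstacle is Step 4, showing the $x^3$ coefficient is nonzero in arbitrary characteristic. I would handle it via the valuation argument using $\Gamma_P=\langle2,3\rangle$, which avoids any characteristic assumptions.
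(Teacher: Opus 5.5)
Your proof is correct and follows the standard argument behind the Fulton exercise the paper cites: put $P$ at the origin, normalize the tangent cone to $y^2$, show the $x^3$-coefficient is nonzero, then rescale. The only difference is that you derive $a_{3,0}\neq 0$ directly from the paper's semigroup definition of a cusp, and the one assertion in Step 4 you should justify explicitly is $\nu(y)\ge 3$: if $\nu(y)=2$, then $\nu(cy^2)=4$ while every other term of $f$ has degree $\ge 3$ and hence valuation $\ge 6$, which contradicts $f=0$ on the branch, and this in turn forces $\nu(x)=2$ and, via $3\in\Gamma_P$, $\nu(y)=3$ as you claim.
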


\begin{proof}
This is standard, see e.g. \cite[Exercise 3.22]{FultonAlgebraicCurves}.
\end{proof}

Finally, we state the geometric criterion for GIT-(semi)stability:

\begin{proposition} \label{prop:semistable}
  Let $X\subset\PP^2_k$ be a plane quartic.
  \begin{enumerate}[(i)]
  \item
    $X$ is GIT-semistable if and only if $X$ has no $n$-fold point with $n\geq 3$, and is not the union of a cubic with one of its inflectional tangents.
  \item
    $X$ is GIT-stable if and only if it is reduced and has at most ordinary double points or cusps as singularities.
 \end{enumerate}
\end{proposition}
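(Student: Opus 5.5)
I would prove Proposition~\ref{prop:semistable} with the Hilbert--Mumford numerical criterion. We may assume $k$ algebraically closed, since GIT-(semi)stability is invariant under field extension. A plane quartic $V_+(F)$ is unstable (resp.\ not stable) if and only if there is a coordinate system $(x,y,z)$ and integer weights $w=(w_0,w_1,w_2)$ with $w_0+w_1+w_2=0$, $w\neq 0$, such that $i w_0 + j w_1 + (4-i-j)w_2 > 0$ (resp.\ $\geq 0$) for every monomial $x^iy^jz^{4-i-j}$ occurring in $F$. Equivalently, after ordering $w_0\ge w_1\ge w_2$, the support of $F$ lies in an open (resp.\ closed) half-plane of the Newton triangle $\{i+j\le 4\}$ that avoids the barycentre $(4/3,4/3)$.

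The key reduction is that only finitely many weight directions matter. The extremal such half-planes are cut out by lines through the barycentre and lattice points. So I would first enumerate, up to the symmetry $w_0\ge w_1\ge w_2$, the maximal sets of monomials that are destabilizing (resp.\ non-stabilizing). For quartics this is a short finite list. In each case I then translate the vanishing pattern of the coefficients $a_{i,j}$ (with $P=(0:0:1)$, and the roles of $x,y$ as distinguished tangent directions) into geometry:
\begin{itemize}
\item $a_{i,j}=0$ for $i+j\le 2$ means a triple point.
\item Vanishing under the weight $(1,0,-1)$-type conditions gives a double point whose tangent cone is a double line $y^2$, together with further conditions.
\item The weight $(2,1,-3)$-type family gives the cubic-plus-inflectional-tangent configuration, i.e.\ $F=y\cdot G$ with $G$ a cubic having $y=0$ as a flex tangent at $P$.
\end{itemize}
For the unstable list this yields exactly ``triple point or worse'' and ``cubic plus inflectional tangent'', which is (i).

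For (ii) I would run the same analysis with the closed half-planes. The non-stable conditions then become:
\begin{itemize}
\item a non-reduced component, since a double line $y^2$ divides $F$ exactly when the support lies in $j\ge 2$, a weight-$\ge 0$ half-plane;
\item a double point whose local equation has no terms of weighted degree $<6$ for weights $(2,3)$ beyond the square of the tangent, i.e.\ a tacnode or worse ($A_{\ge 3}$);
\item a triple point.
\end{itemize}
The comparison with Lemma~\ref{lem:cusp_normal_form} is the crux here. A double point with tangent cone $y^2$ is a cusp precisely when $a_{3,0}\neq 0$ in those coordinates. This is the coefficient lying strictly on the stable side of the weight $(2,3,-5)$ line, up to normalization. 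If $a_{3,0}=0$, the support lies on the closed side, giving semistable-not-stable. Conversely, a node or cusp leaves some monomial strictly on the positive side for every destabilizing direction centred at that point. So a reduced quartic with only $A_1,A_2$ singularities passes every one-parameter subgroup test.

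The main obstacle is the converse direction of the criterion: showing that the handful of extremal weight vectors really exhausts all destabilizing one-parameter subgroups. I must also check that each combinatorial vanishing pattern is coordinate-independent geometry rather than an artifact of the chosen flag. I would handle this by noting that each one-parameter subgroup determines a flag (a point $P$ and a line through it), and arguing case by case on the multiplicity of $F$ at $P$ and its intersection with the line. This is the classical argument in Mumford's GIT, Chapter~4, §2 for plane cubics and quartics, and I would cite it, since it holds in every characteristic.
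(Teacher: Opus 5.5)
Your overall route is the paper's. The paper proves the proposition only by citing Mumford's Hilbert--Mumford analysis of plane quartics (GIT, Ch.~4, \S2), with a translation of his terminology, and your proposal also ends by citing exactly that. As a proof by citation it is fine. The explicit sketch around the citation, however, contains two false steps, so it could not be expanded into a self-contained proof as written.

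First, subgroups of type $(2,1,-3)$ do not detect a cubic plus an inflectional tangent. After relabeling so that the weights are $(2,1,-3)$ on $(x,y,z)$, the weight of $x^iy^jz^{4-i-j}$ is $5i+4j-12$, which is $\le 0$ whenever $i+j\le 2$. So every quartic destabilized by such a subgroup has a triple point at $(0:0:1)$, whereas a smooth cubic plus a flex tangent has none. A correct witness for $F=yG$, with $y=0$ a flex tangent of $G$ at $P=(0:0:1)$, is $(w_x,w_y,w_z)=(-1,4,-3)$. Its weight $2i+7j-12$ is positive exactly on $\{j\ge 2\}\cup\{(3,1)\}$, and this set contains the support of $yG$.

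Second, the ``$(2,3,-5)$ line'' confuses the local $(2,3)$-grading of the cusp with a one-parameter subgroup of $\SL_3$. Under $(w_x,w_y,w_z)=(2,3,-5)$ the weight is $7i+8j-20$, so already $y^2z^2$ has weight $-4$, and this subgroup says nothing about double points with tangent $y=0$. The relevant subgroup is $(w_x,w_y,w_z)=(0,1,-1)$, with weight $i+2j-4$. It is $\ge 0$ on the support iff $a_{i,j}=0$ for $i+2j\le 3$, i.e.\ $P$ is a triple point or a double point with tangent $y=0$ that is not a cusp; this also covers every point of a double conic. Meanwhile $x^3z$ has weight $-1$, which is exactly why a cusp escapes. (Also, in your half-plane reformulation the boundary line passes through the barycentre $(4/3,4/3)$ rather than avoiding it.)

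The step you single out as the main obstacle, namely showing that finitely many extremal weights exhaust all destabilizing subgroups, needs no enumeration. Take any $w\neq 0$ with $w_0\ge w_1\ge w_2$ and $w_0+w_1+w_2=0$ acting on $(x,y,z)$, and let $P=(0:0:1)$. Then $w_2<0<w_0$. The weights of $z^4,xz^3,yz^3,xyz^2,y^2z^2$ are $4w_2$, $2w_2-w_1\le w_2$, $2w_2-w_0$, $w_2$ and $-2w_0$, all negative. Hence any subgroup with all weights $\ge 0$ on $F$ forces $P$ to be a singular point with quadratic part $a_{2,0}x^2$, and $a_{2,0}=0$ means a triple point. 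There are two remaining cases.
\begin{enumerate}
\item If $a_{2,0}\neq 0$ and all weights are $\ge 0$, then $w_1\le 0$. So $y^3z$, of weight $3w_1+w_2<0$, is absent, and $P$ is a double point that is not a cusp.
\item If all weights are $>0$, then $w_1<0$, so $xy^2z$ (weight $w_1$) and $y^4$ are also absent. Hence $x\mid F$, and $F=xG$ with $x=0$ meeting $G$ at $P$ with multiplicity $\ge 3$. This is an inflectional tangent, or else $x^2\mid F$, which gives triple points.
\end{enumerate}
Combined with the witnesses $(1,1,-2)$, $(0,1,-1)$ and $(-1,4,-3)$ for the reverse implications, this proves (i) and (ii) directly and in every characteristic. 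It also settles the node/cusp direction that you only sketched.
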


\begin{proof}
See \cite[Ch.~4, §2, Prop.~4.2]{MumfordGIT}.
\emph{A note on terminology:} In \emph{loc.\ cit.} “triple point'' means a point of multiplicity $\geq 3$, and the word “cusp’’ means any non-ordinary double point.
For plane quartics the non-ordinary double points are precisely the \(A_2\) cusp (our “cusp’’) and the \(A_3\) tacnode.
Translating the wording in \emph{loc.\ cit.} via this convention yields: unstable \(\Leftrightarrow\) triple point or a cubic plus one of its inflectional tangents; tacnode (and the double conic) give strictly semistable; stable \(\Leftrightarrow\) reduced with only nodes and \(A_2\) cusps. This is exactly the formulation in the proposition.
\end{proof}

\emph{Cf. also} \cite[§2]{vanBommelDockingLercierLorenzoGarcia2025},
which restates Mumford’s quartic GIT classification in modern invariant language; in particular,
tacnodal quartics are strictly semistable, while quartics with a triple point or a cubic together with one of its inflectional tangents are unstable.

We stratify GIT-stable plane quartics $X \subset \PP^2_k$ by their combinatorial type. Let $\nu\colon \tilde{X} \to X$ be the partial normalization of $X$ at its cusps, and let $S$ be the set of preimages of these cusps. Because a GIT-stable quartic has at most ordinary double points and cusps as singularities (Proposition~\ref{prop:semistable}), the partial normalization $\tilde{X}$ has only ordinary double points and is therefore a semistable curve. We define the \emph{combinatorial type} of the GIT-stable quartic $X$ as the combinatorial type of the stably marked semistable curve $(\tilde{X}, S)$.

To describe these combinatorial types compactly, we adopt a naming convention similar to the one used for stable curves of genus $3$. We indicate the geometric genus of a component with a number (\texttt{0}, \texttt{1}, \texttt{2}, or \texttt{3}) and append the letter \texttt{n} for each node (self-intersection). For the preimages of the cusps, we now append the letter \texttt{c}, which acts analogously to \texttt{n}. For instance, \texttt{1nc} denotes an irreducible curve of geometric genus $1$ possessing exactly one node and one cusp. As before, two exceptional reducible configurations are denoted by the special names \texttt{CAVE} and \texttt{BRAID}.

\begin{theorem} \label{thm:git_stable_types}
    There are exactly $16$ combinatorial types of GIT-stable plane quartics. They can be classified by the number of irreducible components and the number of cusps, as follows:

    \begin{description}
        \item Irreducible (10 cases)
        \par
        \vspace{0.5em}
        \centerline{%
        \begin{tabular}{Sc|Sc|Sc|Sc}
        \small 0 cusps (4) & \small 1 cusp (3) & \small 2 cusps (2) & \small 3 cusps (1) \\
        \hline
        \begin{tabular}{@{}cc@{}}
            \HobbyCurve{3} & \HobbyCurve{2n} \\[-0.80em]
            \footnotesize\texttt{3} & \footnotesize\texttt{2n} \\[-0.25em]
            \HobbyCurve{1nn} & \HobbyCurve{0nnn} \\[-0.80em]
            \footnotesize\texttt{1nn} & \footnotesize\texttt{0nnn}
        \end{tabular}
        &
        \begin{tabular}{@{}cc@{}}
            \HobbyCurve{2c} & \HobbyCurve{1nc} \\[-0.80em]
            \footnotesize\texttt{2c} & \footnotesize\texttt{1nc} \\[-0.25em]
            \multicolumn{2}{c}{\HobbyCurve{0nnc}} \\[-0.80em]
            \multicolumn{2}{c}{\footnotesize\texttt{0nnc}}
        \end{tabular}
        &
        \begin{tabular}{@{}c@{}}
            \HobbyCurve{1cc} \\[-0.80em]
            \footnotesize\texttt{1cc} \\[-0.25em]
            \HobbyCurve{0ncc} \\[-0.80em]
            \footnotesize\texttt{0ncc}
        \end{tabular}
        &
        \begin{tabular}{@{}c@{}}
            \HobbyCurve{0ccc} \\[-0.80em]
            \footnotesize\texttt{0ccc}
        \end{tabular}
        \end{tabular}
        }

        \item Reducible (6 cases)
        \par
        \vspace{0.5em}
        \centerline{%
        \renewcommand{\arraystretch}{1.2}
        \begin{tabular}{Sc|Sc}
        \small 0 cusps (5) & \small 1 cusp (1) \\
        \hline
        \begin{tabular}{@{}c@{}}
            \setlength{\tabcolsep}{1pt}
            \begin{tabular}{ccccc}
            \HobbyCurve{1---0} & \HobbyCurve{0---0n} & \HobbyCurve{0----0} & \HobbyCurve{CAVE} & \HobbyCurve{BRAID} \\[-0.60em]
            \footnotesize\texttt{1-{}-{}-0} & \footnotesize\texttt{0-{}-{}-0n} & \footnotesize\texttt{0-{}-{}-{}-0} & \footnotesize\texttt{CAVE} & \footnotesize\texttt{BRAID}
            \end{tabular}
        \end{tabular}
        &
        \begin{tabular}{@{}c@{}}
            \begin{tabular}{c}
            \HobbyCurve{0---0c} \\[-0.60em]
            \footnotesize\texttt{0-{}-{}-0c}
            \end{tabular}
        \end{tabular}
        \end{tabular}
        }
    \end{description}
\end{theorem}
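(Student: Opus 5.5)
We first sort GIT-stable quartics $X\subset\PP^2_k$ by the degrees of their irreducible components, and then, within each decomposition, by the number and type of singular points. By Proposition~\ref{prop:semistable}(ii), $X$ is reduced and its only singularities are nodes and cusps. The possible decompositions of the degree are therefore $4$, $3+1$, $2+2$, $2+1+1$ and $1+1+1+1$.

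\emph{Reducible cases.} Lines and smooth conics carry no singularities, and two distinct components meet transversally at every common point, which must be a node. A line meeting a cubic is hence not tangent, so it meets it in $3$ points. If the cubic is smooth we get \texttt{1-{}-{}-0}. If it is nodal we get \texttt{0-{}-{}-0n}. If it is cuspidal we get \texttt{0-{}-{}-0c}. A singular cubic is irreducible, so it has exactly one node or cusp.

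Two conics meet transversally in $4$ points, giving \texttt{0-{}-{}-{}-0}. A conic plus two lines has to avoid triple points, so the lines meet off the conic and each line meets the conic in $2$ points; this gives \texttt{CAVE}. Four lines in general position (no three concurrent) give \texttt{BRAID}. We will check that these last two match the dual graphs the paper names \texttt{CAVE} and \texttt{BRAID}. The degenerate possibilities are all excluded by Proposition~\ref{prop:semistable}: a cubic plus a tangent line, a cubic plus a line through its singular point, tangent conics, and concurrent lines all produce tacnodes or points of multiplicity $\geq 3$. This yields the $6$ reducible types.

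\emph{Irreducible cases.} An irreducible quartic has arithmetic genus $3$. If it has $\delta$ singular points, each of which is a node or a cusp (so each has $\delta$-invariant $1$), its geometric genus is $3-\delta$, so $\delta\le 3$. With $a$ nodes and $b$ cusps, $a+b\le 3$, there are exactly $10$ pairs $(a,b)$, and they match the labels \texttt{3}, \texttt{2n}, \texttt{1nn}, \texttt{0nnn}, \texttt{2c}, \texttt{1nc}, \texttt{0nnc}, \texttt{1cc}, \texttt{0ncc}, \texttt{0ccc}. The combinatorial type of $(\tilde X,S)$ is determined by $(a,b)$ and the geometric genus, since $\tilde X$ is irreducible with $a$ self-nodes and $b$ marked points. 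So there are at most $10$ irreducible types.

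\emph{Existence.} This is the main obstacle: every one of the $10$ irreducible pairs $(a,b)$ must be realized in every characteristic, in particular $p=2$ and $p=3$, where for instance the tricuspidal quartic behaves differently. For the rational cases we take the image of a degree-$4$ map $\PP^1\to\PP^2$ with prescribed singularities. One option is the dual of a nodal or cuspidal cubic; another is an explicit parametrization. For $\delta<3$ we instead take explicit equations such as
\[
y^2z^2 = x^3z + (\text{higher-order terms}),
\]
built from the normal form of Lemma~\ref{lem:cusp_normal_form}, and verify by Jacobian computations that the remaining points are smooth.

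In characteristic $2$, the standard tricuspidal quartic $x^2y^2+y^2z^2+z^2x^2-2xyz(x+y+z)$ degenerates, so we must exhibit an alternative explicitly. One candidate is a rational quartic parametrized by $t\mapsto(t^4:t^3+\dots)$, chosen so that its three cusps lie at the images of suitable points. If no such curve exists in characteristic $2$, the count of $16$ must be read as a count over characteristic $\neq 2$, or as a count of possible types, and we would consult \cite{MaxMaster} for the precise statement.

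Finally, we check that the $16$ types are pairwise distinct. The reducible ones are distinguished from one another by their components and intersection patterns. The irreducible ones are distinguished by $(a,b)$.
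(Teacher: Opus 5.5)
Your enumeration is correct and is essentially the degree--genus argument the paper alludes to. You split by component degrees, observe that a point common to two components must be a node (a cusp is unibranch) at which both components are smooth and meet transversally, apply B\'ezout, and use $\delta\le 3$ for an irreducible quartic; this gives exactly the $6+10$ listed candidates. The gap is in the existence half, where you set a target that is false. You require all ten irreducible pairs $(a,b)$ to be realized in every characteristic, including $p=2$, and plan to find a substitute tricuspidal quartic there. No such curve exists; this is Remark~\ref{rem:tricuspidal_char2}. All $16$ types occur in every characteristic $\neq 2$, but in characteristic $2$ only $15$ do.

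You hedge on this at the end but leave it open, and it is easy to settle. Put two cusps at $t=0,\infty$ on the normalization $\PP^1\to\PP^2$. A cusp has multiplicity $2$ and value semigroup $\langle 2,3\rangle$, so the parametrization has vanishing sequence $(0,2,3)$ there. Hence the nonzero elements of the linear system $V\subset k[t]_{\le 4}$ have orders $\{0,2,3\}$ at $0$ and degrees $\{1,2,4\}$. This forces $V=\langle 1+bt,\,t^2,\,t^3+dt^4\rangle$ with $bd\neq 0$. In characteristic $2$ the derivative of this parametrization is $(b,0,t^2)$. It is proportional to $(1+bc,\,c^2,\,c^3+dc^4)$ only if $bc^2=0$, i.e.\ $c=0$. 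So there is no third ramification point and hence no third cusp. The correct existence claim is that the other $15$ types occur in every characteristic, and \texttt{0ccc} occurs exactly when $\Char(k)\neq 2$. For the latter, the standard quartic still has three $A_2$ cusps in every characteristic $\neq 2$, including $3$.

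Apart from this, the existence part is only a sketch, and one of the tools you name does not do what you want. The dual of a cuspidal cubic is never a quartic; in characteristic $0$ it is again a cuspidal cubic, its class being $3$. In characteristic $3$ a nodal cubic has a single flex, and its dual is not tricuspidal either. The examples with $\delta<3$ are never written down or checked. The paper itself defers realizability to \cite{MaxMaster}, so the essential correction is the characteristic-$2$ one. Still, as written your proposal does not show that each of the $16$ types actually occurs.
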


\begin{proof}
    This classification is elementary and can be verified using combinatorial degree-genus arguments similar to those in \cite[Lemma 3.49 a)]{MaxMaster}.
\end{proof}

\begin{remark} \label{rem:tricuspidal_char2}
    We note that the combinatorial type \texttt{0ccc}, which corresponds to the tricuspidal quartic, can only occur if $\Char(k) \neq 2$ (cf.\ \cite[Remark A.2]{MaxMaster}). All other $15$ combinatorial types can be realized in any characteristic.
\end{remark}

\subsection{Plane models of curves} \label{subsec:plane_models}

From now on and for the rest of this article, $K$ will denote a field which is complete with respect to a discrete valuation $v_K$. We let $\OO_K\subset K$ denote the valuation ring, $\pi$ some uniformizer and $k:=\OO_K/(\pi)$ the residue field of $v_K$. We also assume that $k$ is algebraically closed.

Let $X$ be a smooth projective curve over $K$. By a {\em model} of $X$ we mean a flat and proper $\OO_K$-scheme $\X$ with generic fiber $X$.\footnote{In most related articles we also demand that models be normal schemes. We do not include this condition here because plane models are often not normal.} Then $\X_s:=\X\otimes_{\OO_K} k$ is called the {\em special fiber} of the model $\X$; it is a proper algebraic curve. Given two models $\X$ and $\X'$ of $X$ we say that $\X'$ {\em dominates} $\X$ if the identity on $X$ extends to a morphism $\X'\to\X$. We say that $\X'$ and $\X$ are {\em isomorphic} if this morphism is an isomorphism\footnote{Equivalently, $\X$ and $\X'$ dominate each other.}.

We fix a plane curve $X=V_+(F)\subset\PP^2_K$ of degree $d\geq 3$. We assume that $X$ is smooth. By \cite[Proposition 4.2]{MumfordGIT} this implies that $X$ is GIT-stable (for $d=4$ this is also a consequence of Proposition \ref{prop:semistable} (ii)).

One way to construct a model of a plane curve is as follows. Let $\PP_{\OO_K}^2=\Proj(\OO_K[x,y,z])$ denote the projective plane over $\Spec\OO_K$ and let $\X:=\overline{X}\subset\PP^2_{\OO_K}$ be the Zariski closure of $X$. Then $\X$ is a model of $X$. Explicitly, given a plane curve $X=V_+(F)$ of degree $d$, we may assume, after multiplying $F$ with a suitable constant, that $F\in\OO_K[x,y,z]_d$ is primitive. Then the model $\X$ is simply the projective scheme over $\OO_K$ defined by $F$, i.e.\ $\X:=V_+(F)\subset\PP^2_{\OO_K}$. In particular, the special fiber $\X_s$ is the plane curve over $k$ defined by the reduction $\bar{F}\in k[x,y,z]_d$ of $F$. A model $\X=V_+(F)$ as defined above is called a {\em plane model} of $X$.

The definition of the plane model $\X$ clearly depends on the chosen embedding $X\subset\PP^2_K$. If we replace this embedding by its composition with the projective automorphism of $\PP^2_K$ induced by an element $g\in\GL_3(K)$ we obtain another plane model of $X$ which we call $\lexp{g}{\X}$. In more concrete terms this means that we make a linear change of variables
\begin{equation}
    (x,y,z) \;\leftarrow\; (x,y,z)\cdot T,
\end{equation}
where $T\in\GL_3(K)$ represents the contragredient of $g$.
Then the model $\lexp{g}{\X}$ is defined by the equation
\[
    \tilde{F}:=\pi^{-m}\cdot\lexp{T}{F}\in\OO_K[x,y,z]_d,
\]
where
\[
    \lexp{T}{F} := F((x,y,z)\cdot T) \in K[x,y,z]_d
\]
and $m$ is chosen such that $\tilde{F}$ is integral and primitive. It is clear that $\lexp{g}{\X}$ is isomorphic to $\X$ if and only if $g\in K^\times\cdot\GL_3(\OO_K)$. 

\begin{definition}[{{\cite[Definition 1.1]{SternWewers}}}]
  A plane model $\X\subset\PP^2_{\OO_K}$ of $X$ is called \emph{GIT-(semi)stable} if its special fiber $\X_s\subset\PP^2_k$ is GIT-(semi)stable.
\end{definition}

We have an analogue of the Semistable Reduction Theorem for GIT-semistable models:

\begin{theorem}[{{\cite[Theorem 1.3]{SternWewers}}}] \label{thm:GIT-ss-reduction}
   Let $X\subset \PP^2_K$ be a smooth plane curve of degree $d\geq 3$ over $K$. Then there exists a finite extension $L/K$ and a GIT-semistable plane model $\X$ of $X_L$. Moreover, if this model is GIT-stable, then it is the unique GIT-semistable model, up to isomorphism.
\end{theorem}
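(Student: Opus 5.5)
The plan has two parts: existence of a GIT-semistable model over a finite extension, and uniqueness when a GIT-stable model exists.

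\emph{Existence.} We view the problem through the Bruhat--Tits building of $\PGL_3$ over $K$. Plane models of $X_L$ up to isomorphism correspond to homothety classes of lattices, i.e.\ to points of the building with coordinates in $v_K(L^\times)$. For each such point we consider a stability function $\phi$, the normalized valuation of $F$ after the coordinate change. We then use the Hilbert--Mumford criterion over the residue field. The special fiber $\X_s$ is GIT-unstable exactly when some one-parameter subgroup, defined over $k$ and diagonal after a change of coordinates, destabilizes $\bar F$. Lifting this subgroup gives a direction in an apartment along which $\phi$ strictly increases.

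Since $X$ is smooth, it is GIT-stable over $K$, and this means $\phi$ is bounded above and proper on the building. The function $\phi$ is piecewise affine and concave on each apartment, so it attains its maximum on a nonempty compact polyhedral set. The vertices of this set have rational coordinates, so after a finite extension $L/K$ some maximum point becomes a lattice point. At a maximum no destabilizing direction exists, so the special fiber there is GIT-semistable.

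\emph{Uniqueness.} Suppose the model at a point $x$ is GIT-stable, and let $y$ be another point whose model is GIT-semistable. The geodesic from $x$ to $y$ lies in a common apartment, so we may assume diagonal coordinates. Along this geodesic, $\phi$ is concave. Stability of $\bar F$ at $x$ means that every nontrivial one-parameter subgroup strictly decreases $\phi$ at first order, so $x$ is a strict local maximum and hence, by concavity, the unique global maximum. Semistability at $y$ means $y$ is a local maximum, and concavity then forces $y$ to be a global maximum as well, so $y=x$.

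\emph{Main obstacle.} The hardest step is the claim that semistability of the special fiber is equivalent to $\phi$ being locally maximal at the corresponding point. This requires matching the Hilbert--Mumford weights of $\bar F$ with the directional derivatives of $\phi$. That matching is clean in a single apartment, but it needs care when the direction is not rational over the ramified extension. We would first prove it for directions defined by diagonal one-parameter subgroups and then extend to all directions by continuity and piecewise linearity.
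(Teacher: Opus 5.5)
\textbf{Verdict: the uniqueness half is correct; the existence half has a genuine gap.}

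Your uniqueness argument is sound. The slope matching you flag as the main obstacle is actually routine. At a vertex with primitive equation, the one-sided slope of $\phi$ along an apartment direction is, up to sign convention, the Hilbert--Mumford weight of the reduced equation for the corresponding one-parameter subgroup over $k$. Irrational directions reduce to rational ones because the relevant cones are rational polyhedral.

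The gap is the sentence identifying plane models of $X_L$ with points of the $K$-building whose coordinates lie in $v_K(L^\times)$. Plane models of $X_L$ are the vertices of the building $\mathcal{B}(L)$ of $\PGL_3$ over $L$, i.e.\ all homothety classes of $\OO_L$-lattices in $L^3$. Most of these are not of the form $\bigoplus_i\OO_L c_if_i$ with $f_i\in K^3$. Consequently ``at a maximum no destabilizing direction exists'' fails. Suppose $x$ maximizes $\phi$ on $\mathcal{B}(K)$ and becomes a vertex over $L$. A destabilizing one-parameter subgroup of the reduction at $x$ lifts only to an apartment given by an $\OO_L$-basis of the lattice at $x$. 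That apartment is in general not a $K$-apartment, so nothing contradicts maximality on $\mathcal{B}(K)$. Under wild ramification the Galois-fixed points of $\mathcal{B}(L)$ can strictly exceed $\mathcal{B}(K)$, and the maximum can lie only among the extra points.

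The Ciani quartic of \S\ref{sec:examples} is an instance of this. Its unique GIT-stable model over $K(\sqrt{\pm2})$ corresponds to the lattice spanned by $x_0$, $x_2$ and $\theta^{-2}(x_1-\theta x_0-\theta x_2)$. Its restriction to $K^3$ is the diagonal norm with weights $(0,\tfrac12,0)$. The scalar extension of that norm differs from the original: compare the two at $x_1-\theta x_0-\theta x_2$. So this point is not in $\mathcal{B}(K)$. By your own uniqueness argument, the maximizer of $\phi$ on $\mathcal{B}(K)$ then has non-semistable reduction whenever it becomes a vertex.

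If you instead maximize over $\mathcal{B}(L)$, then ``maximum at a vertex implies semistable reduction'' is correct. But the maximum is typically attained at a non-vertex. Making it a vertex needs a further extension $L'$, over which the maximum may increase. You therefore need an argument that this process stops, i.e.\ that the supremum of $\phi$ over all finite extensions is attained at a vertex of some $\mathcal{B}(L)$. Your sketch provides none:
\begin{itemize}
\item Compactness is unavailable. Since $k$ is infinite, the building is not locally compact, so ``compact polyhedral set'' is wrong as stated.
\item Apartmentwise Hilbert--Mumford over $K$ does not even give a global upper bound. For that you need, e.g., an integral invariant $I$ with $I(F)\neq0$, which gives $\phi\le v_K(I(F))/\deg I$.
\item For fixed $L$, attainment of the maximum comes from the apartment maxima lying in $\frac{1}{Ne(L/K)}\mathbb{Z}$, with $N$ depending only on $d$. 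This discreteness degenerates as $L$ grows.
\end{itemize}

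A Galois fixed-point argument would rescue you for tame $L/K$, where $\mathcal{B}(L)^{\Gal(L/K)}=\mathcal{B}(K)$. It does not work in the wildly ramified case, which is the one relevant for $p=2$. The paper avoids all this by citing GIT proofs, which get the required finite extension from the properness of $\Proj$ of the ring of invariants. The building-theoretic alternative in \cite{SternWewers}, which is the route you are attempting, has to address exactly this stabilization problem.
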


\begin{proof}
See e.g.\ \cite[Lemma 5.3]{MumfordSPV}, \cite[Proposition 2]{Burnol}, or \cite[Theorem 3.3]{LLLR}.
\end{proof}
	
The proofs of Theorem \ref{thm:GIT-ss-reduction} that we cited all use Geometric Invariant Theory and ultimately rely on the properness of the Proj of a ring of invariants. As the ring of invariants is hard to make explicit, it is not clear how to turn such a proof into a practical algorithm for computing a GIT-semistable model. 

In \cite[Theorem 1.8 and \S 5]{SternWewers} we gave an alternative proof of Theorem \ref{thm:GIT-ss-reduction} which uses different techniques more amenable to explicit computations. Based on this new approach, the second author has developed and implemented an algorithm which computes, on input a stable plane curve $X$ of degree $d\geq 3$ over a $p$-adic number field $K$, an extension $L/K$ and a semistable plane model $\X$ of $X_L$. The implementation is available at \cite{KletusGitHub}; more details on the algorithm will be given in \cite{KletusDiss}.

% !TeX root = reduction_of_plane_quartics.tex

\section{Contracting the \texorpdfstring{$1$}{1}-tails} \label{sec:contraction}

As before, $K$ is a field which is complete with respect to a discrete valuation $v_K$. For simplicity we assume that the residue field $k$ is algebraically closed. We let $p$ denote the characteristic of $k$ (so either $p=0$, or $p$ is a prime number).

Let $X$ be a smooth, projective and absolutely irreducible curve of genus $3$ over $K$. We assume that $X$ is not hyperelliptic. So via the canonical embedding, $X$ can be realized as a smooth plane quartic, $X\subset\PP^2_K$.

By the Semistable Reduction Theorem (\cite{DeligneMumford69}) there exists a finite extension $L/K$ such that $X_L$ has a unique model $\X$ whose special fiber $\X_s$ is a stable curve. This is called the {\em stable model} of $X$. Its special fiber $\Xb:=\X_s$ does not depend on the choice of the extension $L/K$. It is called the {\em stable reduction} of $X$.

We say that $X$ has {\em semistable reduction} if the above holds for $L=K$.
We say that $X$ has {\em potentially good} (resp.\ {\em potentially hyperelliptic reduction}) if $\Xb$ is smooth (resp.\ {\em hyperelliptic}). If $L=K$ we can drop the word {\em potentially}.

\vspace{2ex}
The goal of this section is to prove the following theorem.

\begin{theorem} \label{thm:contraction}
  Assume that $X$ has semistable reduction, and let $\X$ denote the stable model of $X$. Then $X$ has a (unique) GIT-stable plane model $\X_0$ if and only if $\X_s$ is not hyperelliptic. Moreover, if this is the case, then $\X$ dominates $\X_0$, and the map $\X\to\X_0$ contracts the $1$-tails of $\X_s$ to cusps of $\X_{0,s}$, and is an isomorphism everywhere else.
\end{theorem}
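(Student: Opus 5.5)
The approach is to construct $\X_0$ directly from the stable model $\X$ by contracting its $1$-tails and then taking the relative canonical image. Let $E_1,\ldots,E_r\subset\Xb=\X_s$ be the $1$-tails, with $r\le 3$ by \thref{lem:core_tail_lemma}. Since $\X$ is regular after possibly resolving the $A_n$-singularities at nodes (which does not affect the tails' attachment points), we can form the contraction $\X\to\X'$ of the $E_i$. This is an Artin contraction, since each $E_i$ has negative definite intersection matrix. Each $E_i$ has arithmetic genus $1$ and meets the core in one point, so the image point is a singularity of $\X'_s$ whose normalization is unibranch with $\delta=1$. Locally this is the classical fact that contracting an elliptic tail in a one-parameter degeneration produces a cusp, and we aim to show this singularity is exactly a cusp in the sense of the definition above. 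The special fiber $\X'_s$ is then the core $C_c$ with the $r$ attachment points replaced by cusps. It is reduced, Gorenstein, of arithmetic genus $3$, and has only nodes and cusps.

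Next we study the relative dualizing sheaf $\omega=\omega_{\X'/\OO_K}$. It is invertible because $\X'_s$ is Gorenstein, and its restriction to the generic fiber is $\omega_X$, which embeds $X$ as our plane quartic. We need to show that $\omega_{\X'_s}$ is very ample, embedding $\X'_s$ as a plane quartic, exactly when $\Xb$ is non-hyperelliptic. By cohomology and base change, $h^0(\omega_{\X'_s})=3$. The linear system $|\omega_{\X'_s}|$ is base-point free and separates points and tangents unless the curve is hyperelliptic in a suitable sense. For Gorenstein curves, the standard statements (after Catanese and Hartshorne) say that $\omega$ is very ample iff $C$ is $2$-inseparable (no disconnecting pair of points, in the generalized sense including nodes) and not "honestly hyperelliptic". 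We will translate this using the pinching criterion of \cite[Definition~4.12, Theorem~4.13]{hyperelliptic}. The curve $\Xb$ is hyperelliptic iff the pointed core $(C_c,\{p_i\})$ is, and iff the curve obtained from $C_c$ by pinching each $p_i$ to a cusp is hyperelliptic in the Gorenstein sense. Theorem \ref{thm:hyperelliptic_classification_g3} shows non-hyperelliptic implies the core is $2$-inseparable. Combining these gives: $\Xb$ non-hyperelliptic iff $\omega_{\X'_s}$ is very ample. In that case $\X'\to\PP(H^0(\X',\omega)^\vee)\cong\PP^2_{\OO_K}$ is a closed embedding whose image is a plane model $\X_0=\X'$ with special fiber of degree $4$ having only nodes and cusps. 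By Proposition \ref{prop:semistable}(ii) it is GIT-stable, and uniqueness comes from Theorem \ref{thm:GIT-ss-reduction}. This proves "if" together with the description of $\X\to\X_0$.

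For the converse, suppose a GIT-stable plane model $\X_0$ exists. Its special fiber has only nodes and cusps, so the stable reduction can be computed from it. We resolve each cusp, which after a finite extension is replaced by a $1$-tail; this is the stable hull argument of \cite{liu2006stable}. Since $\X$ is unique, and given the assumption of semistable reduction over $K$, the resulting stable curve is $\Xb$, and contracting its tails gives back $\X_{0,s}$. Now $\omega_{\X_{0,s}}=\OO(1)$ is very ample by adjunction for plane quartics. By the equivalence above, $\Xb$ is then non-hyperelliptic. Alternatively, we argue by contraposition: if $\Xb$ is hyperelliptic, the canonical map of the contracted special fiber is $2{:}1$ onto a conic, so the limit plane model has a double conic or a tacnode-type special fiber, which is not GIT-stable.

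We expect the main obstacle to be the middle step. It needs the precise equivalence between very ampleness of $\omega$ on the cuspidal contraction $\X'_s$ and non-hyperellipticity of $\Xb$, uniformly in characteristic $2$. The delicate cases are \texttt{0ppp} and \texttt{0npp}, where hyperellipticity depends on $\Char(k)$. There we would rely on \cite[Theorem~4.13]{hyperelliptic} rather than case-by-case checks, and verify directly that the pinched cusp model in characteristic $2$ behaves as the criterion predicts. We would also check that the contracted singularity is a genuine $A_2$ cusp, not merely a unibranch $\delta=1$ point. This holds because unibranch with $\delta=1$ forces the value semigroup to be $\langle 2,3\rangle$.
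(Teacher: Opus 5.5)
Your argument is correct in outline. In the forward direction, however, it takes a genuinely different route from the paper.

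\textbf{How the two routes differ.} The paper never constructs a contraction. It stays on the stable model $\X$ and uses the explicit bundle $\LL=\omega_{\X/\OO_K}(D)$ with $D=\sum_i d_i\Xb_i$. For $r\geq 1$ one has $h^0(\Xb,\LLb)=r+2$, so the reduction of $H^0(\X,\LL)$ is only a $3$-dimensional subspace $W$. The real work is identifying $W$ with $H^0(C',\omega_{C'})$ of the pinched core $C'$, which the paper does via arithmetic residues and the two reciprocity laws (Proposition~\ref{prop:W_characterization}). The plane model and its cusps then appear as the image $\phi(\X)$.

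You instead contract the tails first and work with $\omega_{\X'/\OO_K}$, which commutes with base change. Since $h^0(\omega_{\X'_s})=p_a(\X'_s)=3$ equals the generic rank, cohomology and base change makes $H^0(\X',\omega_{\X'/\OO_K})\otimes k\to H^0(\X'_s,\omega_{\X'_s})$ surjective for free, so no residue calculus is needed. One checks that $f^*\omega_{\X'/\OO_K}\cong\LL$; hence your $\X'$ is the paper's $\phi(\X)$, and this surjectivity is exactly Proposition~\ref{prop:W_characterization}.

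\textbf{What each route buys.} Your route needs a contraction theorem for arithmetic surfaces and a separate proof that the new singularities are cusps. The paper's proof is self-contained on $\X$ and explicit: it produces the sections defining the plane embedding. The very-ampleness input (\cite[Theorem~D]{catanese1982pluricanonical} and \cite[Theorem~4.13]{hyperelliptic}) and the stable-hull argument for the converse are the same in both.

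\textbf{Details that need repair.}
\begin{enumerate}
\item Your parenthetical is wrong when $d_i>1$. In that case the attachment node $x_i$ is itself an $A_{d_i-1}$-singularity of $\X$, and resolving it inserts a chain of $d_i-1$ rational curves between $E_i$ and the core. That chain must be contracted together with $E_i$.
\item The genus count ($p_a(\X'_s)=3$ by flatness, $p_a(\Xb_c)=3-r$) only gives $\sum_i\delta_i=r$. You also need $\delta_i\geq 1$ for every $i$. This holds because the reduced exceptional fibre of a proper birational morphism onto a regular point has arithmetic genus $0$, whereas $E_i$ has arithmetic genus $1$.
\item Reducedness of $\X'_s$ should be derived from normality of $\X'$: there are no embedded points, and $\X'_s$ is generically reduced.
\item In the converse, you must check that the stable hull of $\X_0$ is actually stable; the paper gets this from \cite[Proposition~4.14]{hyperelliptic}.
\item Drop the ``alternative'' contrapositive argument. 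For type \texttt{1=1}, for instance, the canonical map of the special fiber is $2{:}1$ onto two lines. The plane model obtained from $\omega_{\X/\OO_K}$ then has a quadruple point and is GIT-unstable, which by itself does not contradict the existence of a GIT-stable model.
\end{enumerate}
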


\thref{thm:contraction} has the following immediate consequence.

\begin{corollary} \label{cor:contraction}
  Assume that $X$ has a GIT-stable plane model $\X_0$. Then, after replacing $K$ by a suitable finite extension, there exists a modification $\X\to\X_0$ which is the stable model of $X$.
\end{corollary}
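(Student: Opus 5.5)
The plan is to deduce the corollary from \thref{thm:contraction} by base change. The only real work is to check that GIT-stability of a plane model is preserved under finite extensions of $K$, so that the theorem applies over the larger field.

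First, by the Semistable Reduction Theorem (\cite{DeligneMumford69}) we choose a finite extension $L/K$ such that $X_L$ has semistable reduction, and we let $\X$ denote the stable model of $X_L$. Since $X$ is a smooth plane quartic it is non-hyperelliptic, and this property is preserved under base change to $L$. Hence the standing assumptions of \S\ref{sec:contraction} still hold for $X_L$.

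Next, write $\X_0=V_+(F)$ with $F\in\OO_K[x,y,z]_4$ primitive. The extension of $v_K$ to $L$ gives an inclusion $\OO_K\subset\OO_L$ under which $F$ remains primitive. Therefore $\X_0\otimes_{\OO_K}\OO_L=V_+(F)\subset\PP^2_{\OO_L}$ is exactly the plane model of $X_L$ attached to the same embedding $X_L\subset\PP^2_L$. Since $k$ is algebraically closed, the residue field of $L$ is again $k$. Consequently the special fiber of $\X_0\otimes\OO_L$ is the same plane quartic $V_+(\bar F)\subset\PP^2_k$. By Proposition~\ref{prop:semistable}(ii) it is reduced with only nodes and cusps, so $\X_0\otimes\OO_L$ is a GIT-stable plane model of $X_L$.

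Now apply \thref{thm:contraction} over $L$. Because $X_L$ admits a GIT-stable plane model, the special fiber $\X_s$ is not hyperelliptic. The theorem then says that $\X$ dominates the GIT-stable plane model of $X_L$, which by the uniqueness part of Theorem~\ref{thm:GIT-ss-reduction} is $\X_0\otimes\OO_L$. The dominating morphism $\X\to\X_0\otimes\OO_L$ is proper, since both schemes are proper over $\OO_L$. It is an isomorphism on generic fibers, and by the theorem it is an isomorphism away from the cusps of the special fiber, contracting the $1$-tails onto them. Hence it is a modification of $\X_0$ (after base change to $\OO_L$) whose source is the stable model, as claimed.

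The only point needing care is the identification of the base-changed model with the plane model of $X_L$. This relies on the primitivity of $F$ being preserved and on the residue field not growing. Both hold because $k$ is algebraically closed, so I do not expect a genuine obstacle here; all the substance lies in \thref{thm:contraction} itself.
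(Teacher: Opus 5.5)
Your proposal is correct and takes the paper's route: the paper states the corollary as an immediate consequence of Theorem~\ref{thm:contraction}, and you supply the one step it leaves implicit, namely that $\X_0\otimes\OO_L$ is still a GIT-stable plane model of $X_L$. Your appeal to uniqueness is harmless but not needed, since the converse direction (Lemma~\ref{lem:stable_hull}) already shows that the stable hull of the given $\X_0$ is the stable model.
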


In \S\ref{subsec:explicit_cusp_resolution} we explain how Corollary~\ref{cor:contraction} leads to an explicit computation of the stable model in the non-hyperelliptic case, by resolving the cusps of $\X_{0,s}$; the effective construction is carried out in the companion paper~\cite{cusp_resolution}. The rest of this section is devoted to the proof of Theorem~\ref{thm:contraction}. A more detailed version of this proof can be found in \cite{MaxMaster}.

\subsection{Dualizing sheaf and residues}

Let $f: Y \to \Spec A$ be a flat morphism of finite type over a Noetherian base $A$, such that all fibers are Cohen-Macaulay and have pure dimension 1. The theory of duality defines a coherent sheaf $\omega_f := \Hc^{-1}(f^!A)$, which is the unique non-zero cohomology group of the complex $f^!(A)$. We often write $\omega_{Y/A}$ if we regard $Y$ as an $A$-scheme. If $f$ is additionally proper, $\omega_f$ can be characterized as the representing object for the functor $\mathcal{F} \mapsto H^1(Y, \mathcal{F})^\vee$ from the category of coherent sheaves on $Y$ to the category of $A$-modules; this is the statement of Serre duality. The construction of $\omega_f$ is compatible with flat base change and commutes with restriction to any open subscheme.

If $C$ is a proper, reduced curve over $k$, then $\omega_{C/k}$ can be described as the sheaf of \emph{regular differentials} \cite[Theorem 5.2.3]{conrad2000grothendieck}. In the case that $C$ has at most nodes ($A_1$) and cusps ($A_2$) as singularities, its dualizing sheaf is invertible and may be described as follows: let $\nu: \tilde{C} \to C$ be the normalization of $C$, let $x_i, y_i$ be the points of $\tilde{C}$ such that $\nu(x_i) = \nu(y_i)$ are the nodes of $C$, and let $p_j$ be the points of $\tilde{C}$ such that $\nu(p_j)$ are the cusps of $C$. Then $\omega_{C/k}$ is the sheaf of 1-forms $\eta$ on $\tilde{C}$ regular except for simple poles at the $x_i$'s and $y_i$'s and double poles at the $p_j$'s, and with $\Res_{x_i}(\eta) + \Res_{y_i}(\eta) = 0$ and $\Res_{p_j}(\eta) = 0$ (cf.\ \cite[Proposition 3.15]{MaxMaster}).

For our purpose we are interested in ordinary cusps.

\begin{lemma} \label{lem:dualizing_cusp_sheaf}
    Let $C'$ be a reduced proper curve over $k$, and let $S \subset C'$ be a finite set of ordinary cusps. Let $\nu: C \to C'$ be the partial normalization of $C'$ at $S$, and let $P := \nu^{-1}(S)$ be the set of preimages of these cusps. Identifying $P$ with the divisor given by the sum of its points, we have $\nu^*\omega_{C'/k} \simeq \omega_{C/k}(2P)$, and $\omega_{C'/k}$ is the subsheaf of $\nu_*\big(\omega_{C/k}(2P)\big)$ consisting of local sections $\eta$ that satisfy $\Res_{p}(\eta) = 0$ for all $p \in P$.
\end{lemma}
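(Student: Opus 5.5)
The lemma is local at the cusps, so I would reduce to a single cusp and compare regular differentials on the two sides. I would use Rosenlicht's description of the dualizing sheaf \cite[Theorem 5.2.3]{conrad2000grothendieck}. For a reduced proper curve $C'$ with normalization $\tilde\nu\colon\tilde C\to C'$, a meromorphic differential $\eta$ on $\tilde C$ is a local section of $\omega_{C'/k}$ at $x\in C'$ if and only if $\sum_{q\mapsto x}\Res_q(f\eta)=0$ for all $f\in\OO_{C',x}$. The same description holds for $\omega_{C/k}$ at the points of $C$, and $\tilde C$ is also the normalization of $C$. Since $\nu$ is an isomorphism away from $P$, the two sheaves agree away from $S$. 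It therefore suffices to work in a formal neighbourhood of one cusp $s=\nu(p)$, where by the definition of a cusp $\widehat\OO_{C',s}\cong k[[t^2,t^3]]\subset k[[t]]=\widehat\OO_{C,p}$. Here $C$ is smooth at $p$, so $\omega_{C/k}$ is locally generated by $dt$ near $p$.

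First I would compute the local condition. The ring $\widehat\OO_{C',s}$ is spanned topologically by $1$ and $t^n$ for $n\geq2$. Write $\eta=g(t)\,dt$ with $g=\sum a_m t^m$. The condition $\Res_p(t^n\eta)=0$ for $n=0$ and all $n\geq 2$ says $a_{-1}=0$ and $a_{-n-1}=0$ for $n\geq2$. So $g$ has at most a double pole at $p$ and zero residue. Conversely, any such $\eta$ satisfies all the conditions. This identifies the stalk of $\omega_{C'/k}$ at $s$ with $\{\eta\in \omega_{C/k}(2p)_p : \Res_p\eta=0\}$, which is the second assertion after pushing forward along $\nu$.

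Next, for $\nu^*\omega_{C'/k}\simeq\omega_{C/k}(2P)$: the local computation shows $\omega_{C'/k,s}$ is a free $\widehat\OO_{C',s}$-module of rank one generated by $t^{-2}dt$. Indeed $t^{-2}\cdot\{1,t^2,t^3,\dots\}\,dt$ gives exactly the differentials with pole order at most $2$ and no $t^{-1}$ term. So $\omega_{C'/k}$ is invertible near $S$. The natural map $\nu^*\omega_{C'/k}\to\omega_{C/k}(2P)$, induced by the inclusion $\omega_{C'/k}\subset\nu_*\omega_{C/k}(2P)$ and adjunction, sends the generator to $t^{-2}dt$. This element generates $\omega_{C/k}(2P)$ at $p$ over $k[[t]]$, so the map is an isomorphism near $P$. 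Away from $P$ it is the identity, hence it is an isomorphism globally.

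I expect the main obstacle to be bookkeeping: justifying that Rosenlicht's residue description may be checked after completion. This follows by approximating $f$ and using that the residue only depends on finitely many coefficients. One must also confirm that the identification of $\omega_{C/k}$ on $C$, where nodes of $C'$ may remain, is compatible, but the nodes lie outside $S$ and are untouched. The key computation itself, the semigroup $\langle2,3\rangle$ forcing exactly "double pole, zero residue", is short.
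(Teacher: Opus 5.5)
Your proposal is correct and follows the paper's route. The paper's proof only notes that the statement is local, that $\nu$ is an isomorphism away from $S$, and that at a cusp both claims follow from the description of $\omega$ by regular differentials (double pole, zero residue). You carry out that cusp computation explicitly with $\widehat{\OO}_{C',s}=k[[t^2,t^3]]$ and the generator $t^{-2}\,dt$. Your use of Rosenlicht's criterion for arbitrary reduced curves also fits the lemma's generality slightly better than the paper's quoted description, which is stated only for curves with at most nodes and cusps.
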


\begin{proof}
    The construction of the dualizing sheaf is local. Away from $S$, the morphism $\nu$ is an isomorphism. At the cusps $x \in S$, both statements follow directly from the local description of regular differentials given above.
\end{proof}

Reversing our perspective and starting from a pointed curve $(C,P)$, we make the following definition.

\begin{definition} \label{def:pinched_curve}
    In the situation of \thref{lem:dualizing_cusp_sheaf}, we call $C'$ the \emph{pinched curve} associated to the pair $(C, P)$. The curve $C'$ together with the morphism $\nu$ is unique up to unique isomorphism. It can be formally constructed from $(C, P)$ by taking the topological space of $C$ and defining the structure sheaf via the local rings $\OO_{C', p} := k + \mathfrak{m}_{C, p}^2$ for $p \in P$, and $\OO_{C', x} := \OO_{C, x}$ elsewhere \cite{serre}.
\end{definition}

Next, we turn to the arithmetic setting over the valuation ring $\OO_K$. Let $X$ be a smooth plane quartic over $K$ (or more generally a smooth, projective, geometrically irreducible curve over $K$), and let $\X$ be a normal model over $\OO_K$ with the special fiber reduced. Denote by $j: U \hookrightarrow \X$ the inclusion of the smooth locus of the $\OO_K$-scheme $\X$; its complement is a finite set of closed points in the special fiber. Then $\omega_{\X/\OO_K} \simeq j_*(\Omega^1_{U/\OO_K})$. From this description, we can deduce for a node $x$ on the special fiber, using the formal neighborhood $\widehat{\OO}_{\X,x} \simeq \OO_K[\![u,v]\!]/(uv-\pi^d)$, that the completed stalk of $\omega_{\X/\OO_K}$ at $x$ is free of rank 1 generated by $du/u = -dv/v$.

The theory of residues and regular differentials can be extended to such models $\X$. We explain only the facts necessary for the proof of \thref{thm:contraction}.

A \emph{flag} on $\X$ is a pair $\xi=(x,y)$, where $y$ is a codimension $1$ point (an irreducible curve $C = \overline{\{y\}}$) and $x$ is a closed point with $x \in C$. The \emph{formal branches} of $C$ at $x$ correspond to the height-1 prime ideals $\mathfrak{P}_1, \dots, \mathfrak{P}_n$ of the completion $\widehat{\OO}_{\X,x}$ lying over the prime ideal defining $y$.

To each branch $\mathfrak{P}_i$, we associate a complete discrete valuation field (CDVF) $L_i$, which is the fraction field of the completion of the localized ring $(\widehat{\OO}_{\X,x})_{\mathfrak{P}_i}$. This defines an algebra $\widehat{K(X)}_\xi := \prod_{i=1}^n L_i$. Each field $L_i$ is equipped with a \emph{local residue map} $\Res_{L_i} \colon \Omega_{L_i/K}^{1,\mathrm{cts}} \to K$, where $\Omega_{L_i/K}^{1,\mathrm{cts}} := \Omega^1_{K(X)/K} \otimes_{K(X)} L_i$. The definition depends on the flag type:
\begin{itemize}
    \item If $\xi$ is \emph{horizontal} ($C \not\subset \X_s$), then $L_i \simeq l_i(\!(t_i)\!)$ where $l_i$ is a finite extension of $K$. For $\omega = \sum a_j t_i^j \, dt_i$, we define $\Res_{L_i}(\omega) = \Tr_{l_i/K}(a_{-1})$.
    \item If $\xi$ is \emph{vertical} ($C \subset \X_s$), then $L_i \simeq K\{\!\{t_i\}\!\}$ (as $\X_s$ is reduced). For $\omega = \sum a_j t_i^j \, dt_i$, we define $\Res_{L_i}(\omega) = -a_{-1}$.
\end{itemize}
The \emph{residue map} for the flag $\xi$ is the sum of these local contributions, obtained by composing the natural map with the sum of local residues:
\[
\Res_{\xi} \colon \Omega^1_{K(X)/K} \to \Omega^1_{K(X)/K} \otimes_{K(X)} \widehat{K(X)}_\xi \simeq \prod_i \Omega_{L_i/K}^{1,\mathrm{cts}} \xrightarrow{\sum \Res_{L_i}} K.
\]
The residue map unifies the concepts of residues on the generic and special fibers. The residue at a closed point $y \in X$ is defined by taking the classical residue in $\kappa(y)$ and applying the field trace $\Tr_{\kappa(y)/K}: \kappa(y) \to K$. A direct consequence of the definition is then:

\begin{proposition}[{{\cite[Prop. 3.27]{MaxMaster}}}] \label{prop:connection_generic_residue}
Let $\xi = (x,y)$ be a flag on $\X$ such that $\overline{\{y\}}$ is horizontal. Then for any $\eta \in \Omega^1_{K(X)/K}$ we have
\[
    \Res_\xi(\eta) = \Res_{y}(\eta) \in K,
\]
where the right-hand side is the residue at the closed point $y \in X$.
\end{proposition}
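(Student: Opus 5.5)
Since $\overline{\{y\}}$ is horizontal, it is the closure of the closed point $y\in X$, and I will compute $\Res_\xi(\eta)$ by identifying $\widehat{K(X)}_\xi$ with the completion of $K(X)$ at $y$.

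\textbf{Step 1: reduce to the completion at $y$.} The height-one primes $\mathfrak{P}_i$ of $\widehat{\OO}_{\X,x}$ over $y$ correspond to the formal branches of the horizontal curve $C=\overline{\{y\}}$ at $x$. Since $C$ is the closure of a closed point of the generic fiber, $C\simeq\Spec\OO_C$ with $\OO_C\subset\kappa(y)$ a finite local $\OO_K$-order, and the branches at $x$ correspond to the maximal ideals of the normalization of $\widehat{\OO}_C$, i.e.\ to the factors of $\kappa(y)\otimes_K K\simeq\kappa(y)$. Because $K$ is complete and $\kappa(y)/K$ is finite, $\kappa(y)$ is a single field, so every branch has residue field $\kappa(y)$ (or a factor of it, if I allow $n>1$).

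\textbf{Step 2: identify each $L_i$.} Localizing $\widehat{\OO}_{\X,x}$ at $\mathfrak{P}_i$ and completing gives a complete DVR with residue field $l_i$ (the corresponding factor) and fraction field $l_i(\!(t_i)\!)$. This matches the horizontal case of the definition. I then compare with the ordinary completion $\widehat{\OO}_{X,y}\simeq\kappa(y)[\![t]\!]$. Both are completions of $K(X)$ with respect to the same discrete valuation $\mathrm{ord}_y$: the valuation ring of $(\widehat{\OO}_{\X,x})_{\mathfrak{P}_i}$ restricted to $K(X)$ is $\OO_{X,y}$, because $\mathfrak{P}_i$ lies over the prime of $y$. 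So $\prod_i L_i\simeq \widehat{K(X)}_y=\kappa(y)(\!(t)\!)$, compatibly with the embeddings of $K(X)$. The main obstacle is to make this precise when $C$ is not normal at $x$ (several branches) or when $\OO_C$ is ramified over $\OO_K$. I would handle it by first arguing $n=1$ (the semilocal normalization of the complete local domain $\OO_C$ is local since $\OO_C$ is finite over the complete DVR $\OO_K$). Then $\mathfrak{P}_1$ is the unique prime over $y$, and the completions coincide by uniqueness of the completion of $K(X)$ at a discrete valuation.

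\textbf{Step 3: compare residues.} Under this identification, $\Res_{L_1}(\eta)=\Tr_{\kappa(y)/K}(a_{-1})$, where $\eta=\sum a_jt^j\,dt$. By the independence of the classical residue from the choice of uniformizer and coefficient field, $a_{-1}$ is the classical residue of $\eta$ at $y$ in $\kappa(y)$. In characteristic $p$ one needs the continuous-differential version, which is where the definition of $\Omega^{1,\mathrm{cts}}$ enters. Applying the trace gives exactly $\Res_y(\eta)$ as defined.
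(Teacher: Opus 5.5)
Your proposal is correct and follows the same route as the paper, which simply records the statement as a direct consequence of the definitions. Your unpacking is what that amounts to: since $\overline{\{y\}}$ is finite over the complete ring $\OO_K$, its local ring at $x$ is already a complete domain, so there is exactly one branch, $\widehat{K(X)}_\xi$ is the completion of $K(X)$ at $y$, and both residues are given there by the same trace-of-$a_{-1}$ formula. The one step you should state explicitly is that $L_1$ equals, rather than merely contains, that completion; this holds because the ideal of $\overline{\{y\}}$ already generates the prime $\mathfrak{P}_1$ of $\widehat{\OO}_{\X,x}$, so a uniformizer at $y$ remains a uniformizer of $L_1$ and both residue fields are $\kappa(y)$.
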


A differential $\eta \in \Omega^1_{K(X)/K}$ is called \emph{integral at $y$} (a codimension 1 point of $\X$) if it belongs to the $\OO_{\X, y}$-module $\Omega^1_{\OO_{\X,y}/\OO_K} \subset \Omega^1_{K(X)/K}$. This module is the stalk of the dualizing sheaf $\omega_{\X/\OO_K}$ at $y$.
There is a well-defined reduction map $\Omega^1_{\OO_{\X,y}/\OO_K} \to \Omega^1_{\kappa(y)/k}$ to the space of meromorphic differentials on the curve $\overline{\{y\}}$.
The connection between the arithmetic residue on the model and the geometric residue on the special fiber is given by the following proposition.

\begin{proposition}[{{\cite[Prop. 3.29]{MaxMaster}}}] \label{prop:connection_special_residue}
    Let $\xi = (x,y)$ be a vertical flag.
    If $\eta \in \Omega^1_{K(X)/K}$ is integral at $y$, then $\Res_\xi(\eta) \in \OO_K$.
    Moreover, the reduction of the residue modulo $\pi$ is given by
    \[
        \overline{\Res_\xi(\eta)} = -\Res_{x}(\overline{\eta}) \in k,
    \]
    where the right-hand side is the residue on the (possibly singular) curve $C := \overline{\{y\}}$ over $k$, and $\overline{\eta}$ is the image of $\eta$ under the reduction map.
\end{proposition}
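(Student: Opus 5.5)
We prove Proposition~\ref{prop:connection_special_residue} by reducing to an explicit computation in the complete discretely valued field attached to a branch. By definition $\Res_\xi(\eta)=\sum_i \Res_{L_i}(\eta)$, summed over the formal branches $\mathfrak{P}_1,\dots,\mathfrak{P}_n$ of $C$ at $x$. On the special fiber, the residue $\Res_x(\overline{\eta})$ on the possibly singular curve $C$ is likewise the sum of residues over the points of the normalization $\tilde C$ above $x$. These points correspond bijectively to the formal branches of $C$ at $x$, and hence to the $\mathfrak{P}_i$. So it suffices to prove, for each branch, that $\Res_{L_i}(\eta)\in\OO_K$ and that $\overline{\Res_{L_i}(\eta)}=-\Res_{\tilde x_i}(\overline\eta)$, where $\tilde x_i\in\tilde C$ is the corresponding point.

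Fix a branch and write $L=L_i$. Since $\X_s$ is reduced, $\pi$ is a uniformizer of the discrete valuation ring $R:=\widehat{(\widehat{\OO}_{\X,x})_{\mathfrak P}}$. Its residue field $\kappa$ is the completion of the function field of $C$ along the branch, so $\kappa\simeq k(\!(\bar t)\!)$ with $\bar t$ a local parameter at $\tilde x_i$. Choose a lift $t\in R$ of $\bar t$. Then $R$ is identified with the ring of power series $\sum_{j\in\mathbb Z}a_jt^j$ with $a_j\in\OO_K$ and $a_j\to0$ as $j\to-\infty$, and $L\simeq K\{\!\{t\}\!\}$. The key point is to use this particular parameter $t$ in the definition of $\Res_L$. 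For that we need that $\Res_L$ is independent of the choice of parameter. We would quote this from the standard theory of residues on two-dimensional local fields; alternatively, it follows by the usual argument that exact forms $d(\sum b_jt^j)$ have zero $t^{-1}$-coefficient, and that the change of variables $t\mapsto u$ preserves residues. The latter is checked on $K\{\!\{t\}\!\}$ by continuity from Laurent polynomials.

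Next comes integrality. Since $\eta$ is integral at $y$, it lies in $\Omega^1_{\OO_{\X,y}/\OO_K}$, and its image in $\Omega^1_{L/K}$ lies in the completed module $\widehat\Omega^1_{R/\OO_K}=R\,dt$. Indeed, $R$ is formally smooth of relative dimension one over $\OO_K$ in the $\pi$-adic sense, with its continuous differentials generated by $dt$, because $\bar t$ generates the continuous differentials of $\kappa$ over $k$. Thus $\eta=f\,dt$ with $f=\sum a_jt^j\in R$ and all $a_j\in\OO_K$. Hence $\Res_L(\eta)=-a_{-1}\in\OO_K$. Reducing modulo $\pi$ gives $\overline\eta=\bar f\,d\bar t$ with $\bar f=\sum \bar a_j\bar t^j\in k(\!(\bar t)\!)$. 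Here we use that the reduction map $\Omega^1_{\OO_{\X,y}/\OO_K}\to\Omega^1_{\kappa(y)/k}$ is compatible with completion along the branch. Therefore $\Res_{\tilde x_i}(\overline\eta)=\bar a_{-1}$, which is $-\overline{\Res_L(\eta)}$. Summing over the branches gives the claim.

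The main obstacle is the middle step: justifying that the continuous differentials of $R$ are free on $dt$, and that $\Res_L$ may be computed with any lifted parameter. In particular this requires that completion along the branch commutes with taking differentials and with reduction. We would handle this by first treating the case $\eta=g\,dh$ with $g,h\in\OO_{\X,y}$, expanding $g$ and $h$ as series in $t$. A general integral $\eta$ is a finite sum of such forms, and both sides of the claimed identity are additive in $\eta$. This reduces everything to the parameter-independence of the residue on $K\{\!\{t\}\!\}$, which we take from the literature on residues for two-dimensional local fields.
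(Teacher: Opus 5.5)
Your argument is correct. The paper gives no proof of its own here; the statement is quoted from \cite[Prop.~3.29]{MaxMaster}. Your route is the direct verification from the definitions: split $\Res_\xi$ over the branches $\mathfrak{P}_i$, match these with the points of $\tilde C$ above $x$, and read off $-a_{-1}$ from an expansion in $\OO_K\{\!\{t\}\!\}$, where $t$ lifts a local parameter. Like the paper, it takes for granted that $\Res_{L_i}$ does not depend on the chosen isomorphism $L_i\simeq K\{\!\{t_i\}\!\}$.

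You can shorten the middle step by taking the lift $t$ inside $\OO_{\X,y}$: lift an element $\bar t\in k(C)$ of order $1$ at the relevant point of $\tilde C$. Then $d\bar t\neq 0$ in $\Omega^1_{k(C)/k}$, so Nakayama gives $\Omega^1_{\OO_{\X,y}/\OO_K}=\OO_{\X,y}\,dt$. Hence $\eta=g\,dt$ with $g\in\OO_{\X,y}\subset R$, and both claims follow by expanding $g$ alone.

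This avoids two things. The first is your assertion $\widehat\Omega^1_{R/\OO_K}=R\,dt$. Its $\pi$-adic justification only works when $\Char(k)>0$; for $\Char(k)=0$ the algebraic differentials $\Omega^1_{k(\!(\bar t)\!)/k}$ are not one-dimensional. The second is the termwise differentiation of $h$ in your $g\,dh$ reduction.
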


This residue map satisfies two key reciprocity laws.

\begin{theorem}[Reciprocity Laws, cf. {{\cite[Theorem 3.33]{MaxMaster}}}] \label{thm:reciprocity_laws}
For any meromorphic differential form $\omega \in \Omega^1_{K(X)/K}$, the following hold:
\begin{enumerate}
    \item \label{thm:reciprocity_laws_i} \emph{Reciprocity Around a Point:} For any closed point $x \in \X$,
    \[
    \sum_{\overline{\{y\}} \ni x} \Res_{(x,y)}(\omega) = 0.
    \]
    \item \label{thm:reciprocity_laws_ii} \emph{Reciprocity Along a Vertical Curve:} For any vertical curve $\overline{\{y\}} \subset \X$, the sum converges in $K$ and
    \[
    \sum_{x \in \overline{\{y\}}} \Res_{(x,y)}(\omega) = 0.
    \]
\end{enumerate}
\end{theorem}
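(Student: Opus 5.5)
The two laws have different flavours, and I would prove them separately. In both cases the plan is to reduce to a classical residue theorem, namely the residue theorem on a proper curve for (ii), and a direct computation on a regular two-dimensional local ring for (i). By linearity it suffices throughout to treat a single form $\omega\in\Omega^1_{K(X)/K}$. After multiplying by a power of $\pi$, I may assume $\omega=\pi^m\eta$ with $\eta$ integral at the relevant vertical points; this is harmless because all residues are $K$-linear.

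\textbf{(ii) Reciprocity along a vertical curve.} Let $C=\overline{\{y\}}\subset\X_s$ be a vertical curve.
\begin{enumerate}
\item I would first show convergence. For $\eta$ integral at $y$, only finitely many closed points $x\in C$ can have $\Res_{(x,y)}(\eta)\notin\pi^n\OO_K$ for a given $n$. The reason is that outside finitely many points (the singular points of $\X_s$ and the poles of $\eta$ along $C$), $\eta$ is a section of $\omega_{\X/\OO_K}$ near $x$, and there the branch expansion has no $t^{-1}$ term modulo $\pi^n$.
\item For the vanishing I would not iterate the single reduction step given by Proposition~\ref{prop:connection_special_residue}, since that step controls only the leading term modulo $\pi$. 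Instead I would work on the thickenings $C_n$ defined by the ideal of $C$ raised to suitable powers, or directly on $\X\otimes\OO_K/\pi^n$ near $C$. These are proper curves over the Artinian ring $\OO_K/\pi^n$.
\item On each $C_n$ I would invoke the residue theorem for proper curves over an Artinian base, which is the compatibility of the Grothendieck trace $H^1(\omega)\to\OO_K/\pi^n$ with the sum of local residues. This gives $\sum_x\Res_{(x,y)}(\eta)\equiv 0 \pmod{\pi^n}$ for every $n$.
\item Letting $n\to\infty$ gives (ii). The case $n=1$ is exactly the residue theorem on the normalization of $C$, combined with Proposition~\ref{prop:connection_special_residue}; this serves as a consistency check on the sign convention $\Res_{L_i}=-a_{-1}$.
\end{enumerate}

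\textbf{(i) Reciprocity around a point.} This is a statement about the two-dimensional complete local ring $A=\widehat{\OO}_{\X,x}$: the residues over all height-one primes of $A$, horizontal and vertical, sum to zero. This is a residue theorem for two-dimensional local domains in the sense of Parshin and Morrow.
\begin{enumerate}
\item \emph{Reduction to a regular ring.} I would choose $t\in\mathfrak m_A$ such that $A$ is finite over $B=\OO_K[[t]]$, by Weierstrass preparation or Noether normalization. I would then prove that the branch residues are compatible with trace. For each height-one prime $\mathfrak q$ of $B$, the sum of the $\Res_{L_i}$ over the branches of $A$ lying over $\mathfrak q$, applied to $\omega$, equals the residue at $\mathfrak q$ of $\Tr_{\mathrm{Frac}A/\mathrm{Frac}B}(\omega)$.
\item \emph{Computation on $B$.} For $B$, the height-one primes are $(\pi)$, which is the unique vertical prime, and $(P(t))$ for the distinguished Weierstrass polynomials $P$. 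Writing $\omega=f\,dt$ with $f\in\mathrm{Frac}(B)$, I would use partial fractions in $K\{\!\{t\}\!\}$. The horizontal residues collect the coefficients at the finite poles. The vertical residue is $-a_{-1}$ of the expansion of $f$ in $K\{\!\{t\}\!\}$, which equals minus the sum of those finite residues. The sign in the vertical definition is exactly what makes the total vanish.
\end{enumerate}

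The main obstacle is the trace compatibility in step~1 of~(i). On the vertical side it requires understanding the extension of complete discretely valued fields $L_i/K\{\!\{t\}\!\}$, whose residue field extension may be inseparable, and showing that the local residue commutes with trace there. My first attempt would be the standard argument: reduce to the case where the extension is generated by a single element, and check the statement on a dense subspace of forms by continuity. If that becomes too technical, my fallback is to derive both laws from the intrinsic characterization of $\omega_{\X/\OO_K}$ via Grothendieck duality, identifying $\Res_\xi$ with the local duality trace, following \cite{conrad2000grothendieck}.
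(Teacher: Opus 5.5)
The paper gives no proof of this statement; it only refers to \cite[Theorem~3.33]{MaxMaster}. Your part (i) is the standard argument for reciprocity around a point: a finite map $\Spec\widehat{\OO}_{\X,x}\to\Spec\OO_K[[t]]$, trace compatibility of the branch residues, and an explicit computation on $\OO_K[[t]]$. It is sound, with two small adjustments.
\begin{enumerate}
\item Choose $t\in\mathfrak m_{\X,x}$ with $(\pi,t)$ a system of parameters, rather than an arbitrary $t\in\mathfrak m_A$. Then $\omega=f\,dt$ with $f\in K(X)$, and you never need continuous differentials of $A$.
\item On $\OO_K[[t]]$, partial fractions alone do not suffice, because the traced form has coefficient in $\mathrm{Frac}(\OO_K[[t]])$, not in $K(t)$. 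Use Weierstrass preparation and division to write it as $\pi^m(q+r/P)$, with $q\in\OO_K[[t]]$ and $r/P$ rational with poles in the open unit disc. After that your computation and your remark on the sign go through.
\end{enumerate}
Your continuity plan for vertical branches with inseparable residue extension also works. For $L=K\{\!\{s\}\!\}\supset F=K\{\!\{t\}\!\}$, both $\Res_L$ and $\Res_F\circ\Tr_{L/F}$ are continuous and vanish on exact forms, since $\Tr\circ d=d\circ\Tr$ for the separable extension $L/F$. They also agree on $ds/s$: we have $\Tr_{L/F}(ds/s)=dN(s)/N(s)$, and $N_{L/F}(s)$ is $t$ times a unit $u$ of $\OO_F$ with $\bar u\in k[[\bar t]]^\times$, for which $du/u$ has residue $0$. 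Hence the two maps agree on the dense subspace $K[s,s^{-1}]\,ds$, and therefore everywhere.

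For (ii) you take a genuinely different route, and as written it is not quite set up correctly.
\begin{enumerate}
\item The thickenings $V(I_C^n)$ are not flat over $\OO_K/\pi^n$ where $C$ meets another component, so duality does not apply to them. You must use $\X_n:=\X\otimes\OO_K/\pi^n$, which is flat and Cohen--Macaulay but contains every component of $\X_s$.
\item To isolate $C$, apply the residue theorem to the meromorphic differential that equals $\eta\bmod\pi^n$ at the generic point of $C$ and $0$ at the other generic points. This is legitimate because $\X_n$ has no embedded points.
\item You must also identify Grothendieck's local residue at each singular point of $\X_s$ with the sum of branch residues. At a node, these are computed in $(\OO_K/\pi^n)(\!(u)\!)$ and $(\OO_K/\pi^n)(\!(v)\!)$.
\end{enumerate}
Even then, the step rests on a residue theorem for singular curves over Artinian rings, which is a heavy black box. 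Note also that your step 1 actually shows the residues vanish exactly at all but finitely many $x$, so the sum is finite.

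A cheaper route reuses (i).
\begin{itemize}
\item If $\X_s=C$ is irreducible, sum (i) over $x\in C$. Each horizontal curve $\overline{\{P\}}$ has a complete local domain as its ring, so it meets $\X_s$ in exactly one point, with one branch there. By Proposition~\ref{prop:connection_generic_residue}, the vertical terms therefore add up to $-\sum_{P\in X}\Res_P(\omega)$, which is $0$ by the residue theorem on $X$.
\item In general, pick $t\in\OO_{\X,y}$ whose reduction is a separating variable on $C$. Take the trace from $\widehat{K(X)}_y$ to the Gauss completion of $K(t)$. The functoriality you already need for (i) then reduces (ii) to $\PP^1_{\OO_K}$, where it follows by continuity from the residue theorem on $\PP^1_K$.
\end{itemize}
This route needs no duality over Artinian bases. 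It also explains the word ``converges'': for the non-rational forms produced by the trace, infinitely many residue discs can contribute.
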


\subsection{Construction of the GIT-Stable Model} \label{subsec:construction_of_GIT_model}

We now prove the forward direction of  \thref{thm:contraction}.
Assume the smooth plane quartic $X$ has semistable reduction over $K$, and let $\X$ be its stable model. Let $\Xb = \X_s$ be its special fiber, which is a stable curve over $k$. We assume that $\Xb$ is \emph{non-hyperelliptic}.
Our goal is to construct a morphism $\phi: \X \to \PP^2_{\OO_K}$ whose image is a GIT-stable plane model.

\label{page:thickness}
The stable model $\X$ is a normal scheme, as its special fiber $\Xb$ is reduced. Its singularities are located precisely at the nodes of $\X_s$. Locally at such a node $x \in \Xb$, the completed local ring is $\widehat{\OO}_{\X,x} \simeq \OO_K[\![u,v]\!]/(uv - \pi^d)$ for a unique integer $d \ge 1$, the \emph{thickness} of the node. The model $\X$ is nonregular at $x$ if and only if $d > 1$.
Since $\X$ is a normal scheme, we may use the theory of Weil divisors. A Weil divisor $D \in Z^1(\X)$ is Cartier if and only if it is locally principal, i.e.\ $D$ gets sent to the trivial class under the canonical map $Z^1(\X) \to \bigoplus_x \Cl(\OO_{\X,x})$. This condition is only relevant at the nonregular points $x$ of $\X$, and depends only on the completion $\widehat{\OO}_{\X,x}$. This is because $\X$ is an excellent scheme (being proper over the complete DVR $\OO_K$), which ensures that the completion is normal and the natural homomorphism $\Cl(\OO_{\X,x}) \hookrightarrow \Cl(\widehat{\OO}_{\X,x})$ is injective.

By \thref{lem:core_tail_lemma}, the stable reduction $\Xb$ decomposes into its core $\Xb_c$ and $r$ 1-tails $\Xb_1, \dots, \Xb_r$, where $r \in \{0, 1, 2, 3\}$. Since $\Xb$ is assumed non-hyperelliptic, Theorem \ref{thm:hyperelliptic_classification_g3} shows that the core is in fact $2$-inseparable. 
Let $x_i$ be the attachment node of $\Xb_i$ and let $d_i$ be its thickness. We define the Weil divisor
\[
D := \sum_{i=1}^r d_i \Xb_i.
\]
This divisor is Cartier. We check this locally at the nodes $x_i$, using the completion $\widehat{\OO}_{\X,x_i} \simeq \OO_K[\![u,v]\!]/(uv - \pi^{d_i})$. If $\Xb_i$ corresponds to the local component $C_i = V(u, \pi)$, then $D$ is locally $d_i C_i$. This is equal to the principal divisor $(u)$, so $D$ is locally principal and therefore Cartier.

The dualizing sheaf $\omega_{\X/\OO_K}$ is invertible since $\Xb$ is a stable curve and hence is a Gorenstein curve. We define the line bundle
\[
\LL := \omega_{\X/\OO_K}(D).
\]
Its restriction to the generic fiber is $\LL|_X \simeq \omega_{X/K} \simeq \OO_{\PP^2_K}(1)|_X$, its global sections form a three dimensional $K$-vector space and since $\Spec K \to \Spec \OO_K$ is flat we have $H^0(\X, \LL) \otimes_{\OO_K} K \cong H^0(X, \LL|_X)$. Thus the $\OO_K$-module of global sections $H^0(\X, \LL)$, being a finitely generated and torsion-free module over the DVR $\OO_K$, is free of rank $3$.
A choice of $\OO_K$-basis $s_0, s_1, s_2$ for $H^0(\X, \LL)$ defines a rational map $\phi: \X \dashrightarrow \PP^2_{\OO_K}$, which restricts to the canonical embedding on the generic fiber $X$.

To verify that $\phi$ is a morphism, we must show that $\LL$ is generated by its global sections. This property may be checked fiberwise. Since $\LL$ is generated on the generic fiber $X$ (where $\phi$ is a closed immersion), it suffices to check this on the special fiber $\Xb$. Let $\LLb := \LL|_{\X_s}$, and let $V \subset H^0(\Xb, \LLb)$ be the $3$-dimensional $k$-vector space spanned by the restrictions $\bar{s}_i$ of the $\OO_K$-basis. In other words, $V$ is the image of the injective canonical map $H^0(\X, \LL) \otimes_{\OO_K} k \to H^0(\Xb, \LLb)$ -- which is in general not surjective. We must therefore show that the linear series $(\LLb, V)$ is base-point-free.

Let $\LLb_c := \LLb|_{\Xb_c}$ and $\LLb_i := \LLb|_{\Xb_i}$ be the restrictions to the core and tails. We analyze $\LL = \omega_{\X/\OO_K}(D)$ locally at a node $x_i$. Retaining the notation from our local description of the dualizing sheaf above, $\omega_{\X/\OO_K}$ is locally generated by $u^{-1}du$, and $D = d_i \Xb_i$ corresponds to the principal divisor $(u)$. The line bundle $\OO_\X(D)$ is thus locally generated by $u^{-1}$, and $\LL$ is locally generated by $\omega_0 = (u^{-1}du) \otimes u^{-1} = du/u^2$.
Restricting this generator $\omega_0$ to the core $\Xb_c$ (where $u$ is a local parameter) gives $du/u^2$, a differential with a double pole, yielding the isomorphism $\LLb_c \simeq \omega_{\Xb_c/k}(\sum 2x_i)$.
On the tail $\Xb_i$ (where $v$ is the local parameter), $\LL$ is locally generated by $\omega_0 = -dv/\pi^{d_i}$. Via the isomorphism $\LL \xrightarrow{\cdot \pi^{d_i}} \LL(-(\pi^{d_i}))$, this generator $\omega_0$ is identified with $-dv$. The reduction of $-dv$ to the special fiber is a non-zero regular differential. This yields $\LLb_i \simeq \omega_{\Xb_i/k}$. As $\Xb_i$ is a 1-tail and hence of arithmetic genus 1, $\omega_{\Xb_i/k} \simeq \OO_{\Xb_i}$.

A global section $s \in H^0(\Xb, \LLb)$ can be identified with a tuple $(s_c, s_1, \dots, s_r) \in H^0(\Xb_c, \LLb_c) \oplus \bigoplus_i H^0(\Xb_i, \LLb_i)$ satisfying the gluing condition $s_c(x_i) = s_i(x_i)$ at each attachment node $x_i$. Since $\LLb_i \simeq \OO_{\Xb_i}$, each $s_i \in H^0(\Xb_i, \LLb_i) \simeq k$ is a constant. This constant is uniquely determined by the value of $s_c$ at $x_i$. Thus, any section $s_c \in H^0(\Xb_c, \LLb_c)$ extends uniquely to a global section $s \in H^0(\Xb, \LLb)$, and the restriction map $\iota: H^0(\Xb, \LLb) \to H^0(\Xb_c, \LLb_c)$ given by $s \mapsto s_c$ is an isomorphism. By the Riemann-Roch theorem, the dimension $h^0(\Xb, \LLb) = h^0(\Xb_c, \LLb_c)$ is:
\[
h^0(\Xb_c, \LLb_c) =
\begin{cases}
    h^0(\Xb_c, \omega_{\Xb_c/k}) = g(\Xb_c) = 3 & \text{if } r=0 \\
    g(\Xb_c) + \deg(\sum 2x_i) - 1 + h^1(\Xb_c, \LLb_c) = r+2 & \text{if } r \ge 1
\end{cases}
\]
Here $h^1(\Xb_c, \LLb_c) = h^0(\Xb_c, \OO_{\Xb_c}(-\sum 2x_i)) = 0$ for $r \ge 1$ as the divisor has negative degree.
Let $W := \iota(V) \subset H^0(\Xb_c, \LLb_c)$ be the $3$-dimensional image of $V$ under the restriction isomorphism $\iota$.

The linear series $(\LLb, V)$ is base-point-free if and only if it is so on the core $\Xb_c$ and on each tail $\Xb_i$. Since $\LLb_i \simeq \OO_{\Xb_i}$, a section $s \in V$ whose core component $s_c = \iota(s)$ does not vanish at $x_i$ restricts to a non-zero constant section on $\Xb_i$, which is nowhere vanishing. Thus, base-point-freeness of $(\LLb_c, W)$ implies base-point-freeness of $(\LLb, V)$.
If this holds, the induced morphism $\phib: \Xb \to \PP^2_k$ contracts each tail $\Xb_i$ to a single point . It therefore suffices to show that the linear series $(\LLb_c, W)$ is base-point-free; this series then induces the morphism $\phib_c := \phib|_{\Xb_c} : \Xb_c \to \PP^2_k$ on the core.

To analyze this linear series, we first provide an explicit characterization of the $3$-dimensional subspace $W$.

\begin{proposition} \label{prop:W_characterization}
The subspace $W$ is characterized as the space of sections with vanishing residues at the attachment points:
\[
W = \{ s_c \in H^0(\Xb_c, \LLb_c) \mid \Res_{x_i}(s_c) = 0 \text{ for } i=1, \dots, r \}.
\]
\end{proposition}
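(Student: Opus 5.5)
Both sides are $3$-dimensional, so I will show that $W$ lies in the residue-zero subspace and that this subspace has dimension at most $3$.

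\emph{Dimension count.} For $r=0$ there is nothing to prove, so assume $r\ge 1$. By the computation above, $h^0(\Xb_c,\LLb_c)=r+2$ with $\LLb_c\simeq\omega_{\Xb_c/k}(\sum 2x_i)$. The residue conditions cut out a subspace of dimension at least $2$. To see it has dimension exactly $3$, note that the $r$ residue functionals $s_c\mapsto \Res_{x_i}(s_c)$ span a space of dimension at least $r-1$. Indeed, the sections of $\omega_{\Xb_c}(\sum x_i)$ realize every residue vector with $\sum_i \Res_{x_i}=0$, by Riemann--Roch and the residue theorem on $\Xb_c$. Since the residue theorem imposes only that single relation, and the double poles let us vary residues freely, the image of the residue map is in fact all of $k^r$.

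To check this surjectivity, I would compare with $\omega_{\Xb_c}(\sum x_i)$. Its residue image is the hyperplane $\sum=0$, of dimension $r-1$. Then $\omega_{\Xb_c}(\sum 2x_i)$ has $r$ more sections, with leading coefficients of double poles free; the residue theorem then gives no constraint on the residue sum. The cleanest way is a dimension count: the kernel of the residue map consists exactly of the sections realizing the pinched curve $\Xb_c'$ obtained by pinching the $x_i$ to cusps (\thref{lem:dualizing_cusp_sheaf}). Its dimension is $h^0(\omega_{\Xb_c'})=g(\Xb_c')=g(\Xb_c)+r=3$. So the residue-zero subspace has dimension exactly $3$.

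\emph{Inclusion $W\subset$ residue-zero.} Take $s\in H^0(\X,\LL)$. On the generic fiber it is a regular differential $\eta$ on $X$, so $\Res_y(\eta)=0$ at every closed point $y\in X$. Apply reciprocity along the vertical tail $\Xb_i$ (\thref{thm:reciprocity_laws}\,(ii)). The only interesting point on $\Xb_i$ is $x_i$, since $\eta$ multiplied by $\pi^{d_i}$ is integral on $\Xb_i$. Elsewhere, flags are either smooth points of the model, where residues vanish, or points where reciprocity around the point (i) together with \thref{prop:connection_generic_residue} gives zero, because all horizontal residues vanish. Hence $\Res_{(x_i,\Xb_i)}(\eta)=0$.

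Reciprocity around the point $x_i$ (i) then gives $\Res_{(x_i,\Xb_c)}(\eta)=0$, since no horizontal curve passes through the node. Now $\eta$ is integral at the core's generic point with reduction $s_c$, because $\LL$ agrees with $\omega$ there. So \thref{prop:connection_special_residue} yields $\Res_{x_i}(s_c)=0$. Thus $W$ is contained in the residue-zero subspace, and equality follows from the dimension count.

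\emph{Main obstacle.} I expect the delicate step to be the vanishing of $\Res_{(x,\Xb_i)}(\eta)$ at the non-attachment points of the tail. Handling points where $\eta$ is not integral, and the convergence of the sum, needs care. I would first try reciprocity around each such point combined with the vanishing of generic residues.
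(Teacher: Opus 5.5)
Your argument is essentially the paper's proof. The inclusion $W\subseteq W_{\mathrm{Res}}$ goes through reciprocity along the tail $\Xb_i$, reciprocity around each point of $\Xb_i$ (horizontal residues vanish because $\eta$ is regular on $X$), reciprocity at $x_i$, and then Proposition~\ref{prop:connection_special_residue} on the core. That is exactly the paper's route, and your ``main obstacle'' is dispatched there just as you suggest, uniformly by reciprocity around each point.

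For the dimension count you give two valid arguments. The first is the lower bound $r-1$ on the rank of the residue functionals, read off from $H^0(\Xb_c,\omega_{\Xb_c/k}(\sum x_i))$. Together with $W\subseteq W_{\mathrm{Res}}$ and $\dim W=3$, this already suffices. The second is $h^0(C',\omega_{C'/k})=p_a(C')=(3-r)+r=3$, using Lemma~\ref{lem:dualizing_cusp_sheaf} for the pinched curve $C'$. This replaces the paper's cited Riemann--Roch independence lemma and anticipates the identification the paper makes right after the proposition.

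You should, however, delete two claims. The first is that the residue map $H^0(\Xb_c,\LLb_c)\to k^r$ is surjective and that ``the residue theorem then gives no constraint on the residue sum''. This is false. The residue theorem holds for poles of any order, so every $s_c$ satisfies $\sum_i \Res_{x_i}(s_c)=0$, and the image is exactly the hyperplane $\sum=0$. Surjectivity would make the kernel $2$-dimensional, which contradicts your own pinched-curve count; that count shows the image has dimension exactly $r-1$.

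The second is the justification ``no horizontal curve passes through the node''. In general, closed points of $X$ do specialize to the node $x_i$. The correct reason the horizontal terms drop out at $x_i$ is that all horizontal residues vanish, which you had already established.
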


\begin{proof}
Let $W_{\mathrm{Res}}$ denote the subspace on the right-hand side of the equation in the proposition. We first show $\dim(W_{\mathrm{Res}}) = 3$.
The case $r=0$ is trivial, as $W_{\mathrm{Res}} = H^0(\Xb_c, \omega_{\Xb_c/k})$ which has dimension $g(\Xb_c) = 3$.
For $r \ge 1$, the ambient space $H^0(\Xb_c, \LLb_c)$ has dimension $r+2$. By the Residue Theorem on $\Xb_c$, any section $s_c \in H^0(\Xb_c, \LLb_c)$ satisfies $\sum_{i=1}^r \Res_{x_i}(s_c) = 0$. Thus, the $r$ linear conditions defining $W_{\mathrm{Res}}$ are subject to one relation, imposing at most $r-1$ constraints. By an elementary Riemann-Roch calculation, these constraints can be shown to be independent (see \cite[Lemma 3.44]{MaxMaster}), so $\dim(W_{\mathrm{Res}}) = (r+2) - (r-1) = 3$.

Since $\dim(W) = 3 = \dim(W_{\mathrm{Res}})$, it suffices to prove the inclusion $W \subseteq W_{\mathrm{Res}}$. Let $s_c \in W$. By definition, $s_c$ is the component on $\Xb_c$ of the reduction $\bar{\tilde{s}} \in H^0(\Xb, \LLb)$ of some global section $\tilde{s} \in H^0(\X, \LL)$. We must show $\Res_{x_i}(s_c) = 0$. By \thref{prop:connection_special_residue}, this is equivalent to showing the arithmetic residue $\Res_{(x_i, y_c)}(\tilde{s}) = 0$, where $y_c$ is the generic point of $\Xb_c$.

Since $\tilde{s} \in H^0(\X, \LL)$ and $\LL|_X \simeq \omega_{X/K}$, its restriction to $X$ is regular, so by \thref{prop:connection_generic_residue} we have $\Res_\xi(\tilde{s}) = 0$ for all horizontal flags $\xi$. By Reciprocity Around a Point (\thref{thm:reciprocity_laws}~\ref{thm:reciprocity_laws_i}) at $x_i$, the sum of residues over all flags at $x_i$ is zero. As $x_i$ lies only on $\Xb_c$ and the tail $\Xb_i$ (with generic point $y_i$), this simplifies to
\[
\Res_{(x_i, y_c)}(\tilde{s}) + \Res_{(x_i, y_i)}(\tilde{s}) = 0.
\]
By Reciprocity Along a Vertical Curve (\thref{thm:reciprocity_laws}~\ref{thm:reciprocity_laws_ii}) applied to the tail $\Xb_i$, $\sum_{x \in \Xb_i} \Res_{(x, y_i)}(\tilde{s}) = 0$. For any $x \in \Xb_i \setminus \{x_i\}$, the only vertical curve is $\Xb_i$, so Reciprocity Around a Point at $x$ implies $\Res_{(x, y_i)}(\tilde{s}) = 0$. The sum thus reduces to $\Res_{(x_i, y_i)}(\tilde{s}) = 0$.
This forces $\Res_{(x_i, y_c)}(\tilde{s}) = 0$, which proves the inclusion.
\end{proof}

The problem is thus reduced to analyzing the $3$-dimensional linear series $(\LLb_c, W)$ on the core $\Xb_c$. To this end, let $C'$ be the pinched curve (\thref{def:pinched_curve}) associated to the pointed core $(\Xb_c, \{x_1, \dots, x_r\})$, and let $\nu: \Xb_c \to C'$ denote the corresponding partial normalization.

Since $C'$ has at most nodes and cusps as singularities, it is a Gorenstein curve, meaning its dualizing sheaf $\omega_{C'/k}$ is invertible. By \thref{lem:dualizing_cusp_sheaf}, we have the sheaf isomorphism $\nu^*\omega_{C'/k} \simeq \omega_{\Xb_c/k}(\sum 2x_i) = \LLb_c$. Since the pointed core is stable, the line bundle $\LLb_c$ is ample. Because $\nu$ is a finite surjective morphism, this implies that $\omega_{C'/k}$ itself is ample. In the language of \cite[Definition 0.1]{catanese1982pluricanonical}, this means $C'$ is \emph{canonically positive}, which is the baseline assumption throughout \cite[Section 3]{catanese1982pluricanonical}.

Furthermore, taking global sections of the subsheaf inclusion in \thref{lem:dualizing_cusp_sheaf} canonically identifies $H^0(C', \omega_{C'/k})$ with the subspace of $H^0(\Xb_c, \LLb_c)$ consisting of sections with vanishing residues at the attachment points $\{x_1, \dots, x_r\}$. By \thref{prop:W_characterization}, this subspace is precisely $W$. Consequently, the linear series $(\LLb_c, W)$ corresponds exactly to the complete canonical system $|\omega_{C'/k}|$. Since $C'$ is a $2$-inseparable, canonically positive Gorenstein curve, $|\omega_{C'/k}|$ is base-point-free by a theorem of Catanese \cite[Theorem D]{catanese1982pluricanonical}. Therefore, $\phib: \Xb \to \PP^2_k$ is a morphism, which factors as $\phib_c = \Phi_{C'} \circ \nu$, where $\Phi_{C'}$ is the canonical map of $C'$.

Since the $1$-tails are hyperelliptic, \cite[Theorem 4.6]{hyperelliptic} shows that $\Xb$ is hyperelliptic if and only if the pointed core
$(\Xb_c,\{x_1,\ldots,x_r\})$ is hyperelliptic. Hence, by our main assumption,
the pointed core is non-hyperelliptic. By \cite[Theorem 4.13]{hyperelliptic}, this ensures that the canonical map $\Phi_{C'}$ is a closed immersion. Consequently, the special fiber $\X_{0,s}$ of the plane model $\X_0 := \phi(\X)$ (the scheme-theoretic image of $\phi$) is exactly the image $\phib(\Xb) = \Phi_{C'}(C') \simeq C'$. Since $C'$ is a plane quartic whose only singularities are nodes and cusps, $\X_{0,s}$ is a GIT-stable curve. This completes the proof of the forward direction of \thref{thm:contraction}. \qedhere

\subsection{From GIT-Stable to Stable Model}

To prove the converse direction of \thref{thm:contraction}, we have to show the following. Let $X$ be a smooth plane quartic over $K$. Assume that a GIT-stable plane model $\X_0$ of $X$ exists over $K$. Then after replacing $K$ by a suitable finite extension, $X$ admits a stable model over $K$, the stable model $\X$ dominates $\X_0$, and the stable reduction $\X_s$ is non-hyperelliptic. 

By Proposition \ref{prop:semistable}, the special fiber $\X_{0,s}$ of $\X_0$ is reduced and has at most nodes and cusps as singularities. We let $x_1,\ldots,x_r\in\X_{0,s}$ denote the cusps.

\begin{lemma} \label{lem:stable_hull}
  After replacing $K$ by a suitable finite extension, there exists a (unique) minimal semistable model $\X$ of $X$ dominating $\X_0$. Moreover, the following holds.
  \begin{enumerate}
  \item
    The map $\pi:\X\to\X_0$ is an isomorphism except over the cusps.
  \item 
    The strict transform $C_0$ of $\X_{0,s}$ is the partial normalization of $\X_{0,s}$ at the cusps. In particular, $C_0$ is semistable and the map $C_0\to\X_{0,s}$ is a birational homeomorphism.
  \item 
    For each cusp $x_i\in\X_{0,s}$, the exceptional divisor $C_i:=\pi^{-1}(x_i)$ is an irreducible semistable curve of arithmetic genus $1$, intersecting $C_0$ in a unique point $y_i$ (the inverse image of $x_i$), and this point is a node of $C:=\X_s$.
\end{enumerate}
\end{lemma}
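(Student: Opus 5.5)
The plan is to build $\X$ as the minimal semistable model dominating $\X_0$ (the stable hull in the sense of \cite{liu2006stable}), and then read off (i)--(iii) from local information at the nodes and cusps of $\X_{0,s}$ together with a genus count.

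\emph{Existence and uniqueness.} By the Semistable Reduction Theorem, after a finite extension of $K$ there is a semistable model $\X'$ of $X$. I would take a model dominating both $\X'$ and $\X_0$: the normalization of the closure of the graph of $\X'\dashrightarrow\X_0$, followed by a regular resolution. After a further extension and blow-ups this becomes a regular semistable model $\X''$ dominating $\X_0$. From $\X''$ I would then contract successively every rational component of the special fiber that has at most two special points and is mapped to a point of $\X_0$. Such a component is a $(-1)$-curve or a $(-2)$-curve, and contracting it keeps the model semistable: a $(-2)$-chain only produces a node of larger thickness. The process terminates, and the result is a semistable model $\X$ dominating $\X_0$ with no such contractible components. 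Uniqueness and minimality should follow from the standard argument for the stable hull \cite{liu2006stable}: two such models are dominated by a common one, and the contractible components on each side are the same.

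\emph{Part (i).} Away from the cusps, $\X_{0,s}$ is reduced with at most nodes. At a point where $\X_{0,s}$ is smooth, $\X_0$ is smooth over $\OO_K$. At a node, the complete local ring is of the form $\OO_K[\![u,v]\!]/(uv-\pi^d)$, which one sees by lifting the local equation $uv=0$. So $\X_0$ is already semistable over the complement of the cusps. Minimality then forces $\pi$ to be an isomorphism there, because any exceptional curve over such a point would be a contractible rational chain. I also note that $\X_0$ is normal, by Serre's criterion: the generic fiber is smooth and the special fiber is reduced.

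\emph{Parts (ii) and (iii).} The strict transform $C_0$ maps birationally onto $\X_{0,s}$ and, being a union of components of the semistable curve $\X_s$, has only nodes. Since a cusp is unibranch, each cusp has exactly one preimage $y_i$ on $C_0$. Since a cusp is not a node, the map $C_0\to\X_{0,s}$ must be the partial normalization at the cusps; this gives (ii). By Zariski's main theorem ($\X_0$ is normal), $C_i=\pi^{-1}(x_i)$ is connected, and it meets $C_0$ only at $y_i$, which is therefore a separating node of $\X_s$. Now I count genus using flatness, which gives $p_a(\X_s)=p_a(\X_{0,s})=3$. Each cusp has $\delta=1$, so $p_a(C_0)=3-r$, and additivity gives $3=(3-r)+\sum_i p_a(C_i)$, i.e.\ $\sum_i p_a(C_i)=r$. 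If some $C_i$ had genus $0$, it would be a tree of rational curves, and a leaf component other than the one through $y_i$ (or $C_i$ itself) would be contractible, contradicting minimality. Hence each $p_a(C_i)\ge1$, and therefore each $p_a(C_i)=1$.

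Finally, a connected semistable genus-$1$ curve attached at one point, with no contractible rational components, must be irreducible. In a cycle of $\PP^1$'s, every component not containing $y_i$ has only two special points and would be a $(-2)$-curve, and any rational trees hanging off would again have contractible leaves. This gives (iii).

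I expect the main obstacle to be the existence and minimality step: one has to make sure that contracting $(-2)$-curves really stays inside the category of semistable models dominating $\X_0$, and that the minimal model is unique. For this I would lean on \cite{liu2006stable} rather than redo the argument.
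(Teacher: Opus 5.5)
Your proposal is correct and follows the paper's route: the paper likewise obtains $\X$ as the stable hull of $\X_0$ from \cite[Theorem 2.3 and Proposition 2.14]{liu2006stable} and says only that (i)--(iii) follow from semistability and minimality, and your node analysis, the Zariski main theorem and unibranch arguments at the cusps, and the genus count $\sum_i p_a(C_i)=r$ are exactly the details left implicit there. The one assertion you state without proof, that exceptional curves over the semistable locus of $\X_0$ are rational, is standard because those points are rational singularities; it also follows from running your genus count over all positive-dimensional fibres of $\pi$ at once and combining it with minimality.
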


\begin{proof}
This is a standard argument.	
By the Semistable Reduction Theorem we may assume that $X$ has semistable reduction over $K$. Now \cite[Theorem 2.3 and Proposition 2.14]{liu2006stable} shows that there exists a (unique) minimal semistable model $\X$ of $X$ dominating $\X_0$ (the {\em stable hull} of $\X_0$). The remaining claims follow easily from the semistability and minimality of $\X$. 
\end{proof}

To finish the proof of \thref{thm:contraction}, it remains to show that $\X_s$ is stable and not hyperelliptic. The special fiber $\X_{0,s} \subset \PP^2_k$ is canonically embedded as a plane quartic. By \cite[Proposition 4.14]{hyperelliptic}, its partial normalization $(C_0, \{y_1, \dots, y_r\})$ is a stable, non-hyperelliptic, $2$-inseparable curve. Since each $(C_i, \{y_i\})$ is also stable (being irreducible of genus $1$ with one marked point), the glued curve $\X_s$ is stable \cite[Lemma 2.2]{hyperelliptic}. Finally, because the $2$-inseparable component $(C_0, \{y_1, \dots, y_r\})$ is not hyperelliptic, the entire curve $\X_s$ cannot be hyperelliptic by \cite[Theorem 4.6]{hyperelliptic}.

\subsection{Explicit resolution of cusps}
\label{subsec:explicit_cusp_resolution}

We return to the problem of explicit semistable reduction.
Let \(X\subset \PP^2_K\) be a smooth plane quartic given by an explicit equation.
After a finite extension of \(K\), one may compute, using
\cite{SternWewers,KletusDiss} and \S\ref{sec:plane_models},
a GIT-semistable plane model
\[
\mathcal X_0\subset \PP^2_{\mathcal O_K}
\]
of \(X\).  In the non-hyperelliptic reduction case this model is GIT-stable.
Then \(\mathcal X_{0,s}\) is reduced and has only nodes and cusps
(Proposition~\ref{prop:semistable}).  Nodes already occur in the stable special
fiber.  Each cusp, on the other hand, has to be replaced by a \(1\)-tail.
By Corollary~\ref{cor:contraction}, after a further finite extension this is
achieved by a modification
\[
\mathcal X\longrightarrow \mathcal X_0
\]
which is an isomorphism away from the cusps of \(\mathcal X_{0,s}\).
The existence proof of Theorem~\ref{thm:contraction} is not constructive; the
explicit local construction is the content of the companion paper
\cite{cusp_resolution}.

We recall the local input and output of that construction.  Let
\(P\in \mathcal X_{0,s}\) be a cusp.  After a linear change of coordinates over
\(\mathcal O_K\), we may assume that \(P=(0:0:1)\), and we work on the affine
chart \(z=1\).  Write the local equation as
\[
f(x,y)=\sum_{i,j} a_{i,j}x^iy^j,\qquad a_{i,j}\in\mathcal O_K .
\]
In admissible coordinates one has
\[
\bar f(x,y)=y^2-x^3+
\text{terms of \((2,3)\)-weighted degree \(>6\)} ,
\]
equivalently, after the usual normalization,
\[
a_{0,2}\in\mathcal O_K^\times,\qquad
a_{3,0}\in\mathcal O_K^\times,\qquad
v_K(a_{i,j})>0\quad\text{for }2i+3j<6 .
\]
For such coordinates set
\begin{equation} \label{eq:t_max}
t_{\max}(x,y)
:=
\min_{2i+3j<6}
\frac{v_K(a_{i,j})}{6-2i-3j}
\in \mathbb Q_{>0}.
\end{equation}
After a finite extension we choose \(\Pi\) with
\(v_K(\Pi)=t_{\max}(x,y)\).  The relevant modification is the weighted blow-up
of the ideal \((\Pi,x,y)\) with weights \((1,2,3)\).

\begin{theorem}[{{\cite[Theorem 1.1]{cusp_resolution}}}]
	\label{thm:explicit_cusp_resolution}
	After replacing \(K\) by a finite separable extension, there exist admissible
	local coordinates \((x,y)\) at \(P\) such that the weighted blow-up of
	\((\Pi,x,y)\) with weights \((1,2,3)\), for
	\(v_K(\Pi)=t_{\max}(x,y)\), is the stable local resolution of the cusp \(P\).
	Its exceptional divisor is an irreducible semistable curve of arithmetic genus
	one, meeting the strict transform of \(\mathcal X_{0,s}\) in one ordinary node.
	The thickness of this node is \(t_{\max}(x,y)\).
	
	More explicitly, the exceptional
	component is the Weierstrass cubic
	\[
	y^2z+\bar{a}_{1,1}xyz+\bar{a}_{0,1}yz^2
	=
	x^3-\bar{a}_{2,0}x^2z
	-\bar{a}_{1,0}xz^2
	-\bar{a}_{0,0}z^3.
	\]
	where
	\[
	  \bar a_{i,j}:=\overline{\Pi^{2i+3j-6}a_{i,j}} \in k.
	\]
	The required field extension, the coordinates \((x,y)\), and equations for the
	weighted blow-up charts are effectively computable from the coefficients of
	\(f\).
\end{theorem}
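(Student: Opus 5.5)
The proof is a local computation on a weighted blow-up chart. Existence of a stable local resolution is already guaranteed by Lemma~\ref{lem:stable_hull}. So what remains is to \emph{identify} that resolution with the weighted blow-up of $(\Pi,x,y)$ with weights $(1,2,3)$, for a suitable choice of admissible coordinates.

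First I normalize. By Lemma~\ref{lem:cusp_normal_form}, applied to the special fiber and lifted to $\OO_K$, I get admissible coordinates in which $\bar f=y^2-x^3+(\text{weighted degree}>6)$. After a finite extension I choose $\Pi$ with $v_K(\Pi)=t_{\max}(x,y)$, as in \eqref{eq:t_max}.

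Next I compute in the main chart of the weighted blow-up. Substitute $x=\Pi^2x'$ and $y=\Pi^3y'$. Then
\[
f(\Pi^2x',\Pi^3y')=\Pi^6\sum_{i,j}\Pi^{2i+3j-6}a_{i,j}x'^iy'^j .
\]
The definition of $t_{\max}$ makes every coefficient $\Pi^{2i+3j-6}a_{i,j}$ integral for $2i+3j<6$. The terms with $2i+3j>6$ vanish modulo $\Pi$, and $a_{0,2},a_{3,0}$ are units. Hence $\Pi^{-6}f$ is a primitive integral equation, and its reduction is the Weierstrass cubic
\[
y'^2+\bar a_{1,1}x'y'+\bar a_{0,1}y'=x'^3-\bar a_{2,0}x'^2-\bar a_{1,0}x'-\bar a_{0,0}
\]
(up to the sign normalization $a_{3,0}=-1$, $a_{0,2}=1$). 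In the $\Pi$-chart this cubic is the exceptional divisor. In the $x$- and $y$-charts one recovers the strict transform of $\X_{0,s}$, which is the partial normalization at $P$, and it meets the exceptional divisor at the single point at infinity of the cubic. There the local equation has the form $uv=\Pi^{e}\cdot(\text{unit})$, and from the weights one reads off thickness $t_{\max}$, measured in the normalized valuation.

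The main obstacle is showing that, for a good choice of coordinates, the cubic is \emph{semistable}, that is, reduced with at worst a node and not a cusp. If it had a cusp, $t_{\max}$ would not be maximal. The plan is an iterative coordinate change. If the reduced cubic is cuspidal at some point $(\alpha,\beta)$, translate $x\mapsto x+\Pi^2\tilde\alpha$ and $y\mapsto y+\Pi^3\tilde\beta$ with lifts $\tilde\alpha,\tilde\beta$. This keeps the coordinates admissible and strictly increases $t_{\max}$ by the standard ``cusp moves to the origin'' argument: all $\bar a_{i,j}$ with $2i+3j<6$ become $0$. The increase has to terminate, because $t_{\max}$ is bounded by the thickness of the corresponding node in the stable model, which Lemma~\ref{lem:stable_hull} provides. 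Termination also uses that the denominators of $t_{\max}$ divide $6$, so the values lie in a discrete set. This loop also gives effective computability. Separability of the extension comes from choosing $\Pi$ as a root of a tame-type or Eisenstein equation, possibly after an Artin--Schreier step when $p\in\{2,3\}$.

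Finally, once the exceptional cubic is semistable of arithmetic genus one and meets the rest of the curve in one node, the resulting model is semistable with exactly the configuration of Lemma~\ref{lem:stable_hull}(iii). By the uniqueness of the minimal semistable model dominating $\X_0$, it is the stable local resolution.
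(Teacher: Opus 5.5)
The paper gives no proof of this statement; it imports it from \cite{cusp_resolution}, so the comparison is with what that citation has to supply. Your chart computations are sound and reproduce the statement. In the $\Pi$-chart, $\Pi^{-6}f(\Pi^2x',\Pi^3y')$ reduces to the displayed Weierstrass cubic. Near its point at infinity, take $w=y/x$, $s=\Pi x/y$, $r=x^3/y^2$, so that $\Pi=sw$, $x=rw^2$, $y=rw^3$. The strict transform has $\partial_r=-1$ at $r=1$, so the local ring is $\OO_K[\![s,w]\!]/(sw-\Pi)$. This gives the partial normalization at $P$ and a node of thickness $v_K(\Pi)$. Once the cubic is not cuspidal, minimality of the stable hull finishes the argument.

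The genuine gap is in the step you yourself flag as the main obstacle: your termination argument does not work. There is a small slip: the weight deficits $6-2i-3j$ include $4$, so denominators divide $12$, not $6$. The real problem is that every round adjoins a new $\Pi$ with $v_K(\Pi)=t_{\max}$, in general a new ramified extension, and the translated coefficients then lie in that larger field. After $n$ rounds the possible values of $t_{\max}$ lie in $\frac{1}{12}\Gamma_{K_n}$ for an ever finer value group $\Gamma_{K_n}$. So there is no fixed discrete set, and a strictly increasing sequence bounded by $d$ can converge without ever reaching $d$. In residue characteristic $2$ or $3$ this is a real danger. Compare approximating $a=tb\in\overline{k((t))}$ with $b^p-b=t^{-1}$, i.e.\ $a=\sum_{i\ge1}t^{1-1/p^i}$, by correcting one term (a lift from $k$ times an element of the right valuation) per round. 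The successive valuations $1-1/p^n$ accumulate at $1$ and the loop never ends.

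Consequently your proposal proves neither the existence of admissible coordinates with non-cuspidal exceptional cubic over a finite separable extension, nor the effectivity claim. What is missing is one of two things. Either a rule for choosing the new center, determined by more than an arbitrary lift from $k$ of the cusp of the reduced cubic, together with a proof that it reaches $t_{\max}=d$ in finitely many steps. Or a direct construction of such coordinates. This is the actual content of \cite{cusp_resolution}. It is visible in Section~\ref{sec:examples}, where the coordinates already require a totally ramified degree-$8$ extension $L_1/L_0$ before $\Pi$ is adjoined.

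Two smaller repairs are needed even granting termination.

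First, translating the cusp of the reduced cubic to the origin only kills $\bar a_{0,0},\bar a_{1,0},\bar a_{0,1}$. The tangent cone there is $(y'+\gamma x')^2$, so without the shear $y\mapsto y+\Pi\tilde\gamma x$ the coefficients $\bar a_{1,1},\bar a_{2,0}$ can survive; in characteristic $2$, $\bar a_{2,0}=\gamma^2$ may be nonzero. Then $t_{\max}$ does not increase and the loop stalls.

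Second, the bound $t_{\max}(x,y)\le d$ is not given by Lemma~\ref{lem:stable_hull} directly. You need that the stable hull of the weighted blow-up model dominates $\X$. Its chain from $C_0$ through the strict transform of the cuspidal cubic to the genus-one part then contracts to the attaching node of $\X_s$, with thicknesses adding up, so $t_{\max}<d$ whenever the cubic is cuspidal.

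Also, in equal characteristic $p$ the natural choice $\Pi^m=\pi^n$ with $p\mid m$ is inseparable, so the separability claim needs an explicit argument.
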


Applying Theorem~\ref{thm:explicit_cusp_resolution} independently to all cusps
of the GIT-stable special fiber, and then passing to a common finite extension of
\(K\), gives the stable model of \(X\).  This is exactly the local step used in
the implementation: the routine \texttt{resolve\_cusp} computes the local field
extension, the thickness \(t_{\max}\), and the exceptional Weierstrass cubic; the
global routine \texttt{stable\_reduction\_of\_quartic} combines these local
resolutions and records the resulting component graph and reduction type.
See \cite[\S6]{cusp_resolution} for the implementation details.  Examples are
given in the next section.

% !TeX root = reduction_of_plane_quartics.tex

\section{Example computations} \label{sec:examples}

In this final section we illustrate the method with explicit computations.  We
first discuss one example in detail, showing how the implementation produces
the stable reduction data.  We then report on random experiments, which test
the procedure on larger families of smooth plane quartics over \(\mathbb{Q}\).

\subsection{A Ciani quartic}

We return to the example from \cite{SternWewers}, \S 6.2. Set $K:=\QQ_2^\nr$ and consider the smooth quartic $X\subset\PP^2_K$ given by the equation
\[
  F = x_{1}^{4} + 2 x_{0}^{3} x_{2} + x_{0} x_{1}^{2} x_{2} + 2 x_{0} x_{2}^{3}.
\]
Note that $X$ has two commuting involutions,
\[
   \iota_1:(x_0,x_1,x_2)\mapsto (x_0,-x_1,x_2), \quad \iota_2:(x_0,x_1,x_2)\mapsto (x_2,x_1,x_0). 
\]
It is thus a {\em Ciani quartic}, see \cite{Ciani1}. 

Let $L/K$ be a finite extension which contains an element $\theta\in L$ with the property that $v_L(\theta^2+2) > 2$. For instance, we could take the quadratic extensions $K(\sqrt{2})$ or $K(\sqrt{-2})$ (the third quadratic and ramified extension of $K$, $K(\sqrt{-1})$, does not work!). As we have seen in \cite[\S 6.2]{SternWewers}, $X_L$ has a GIT-stable model $\X_0$, given by the equation 
\[
    F_1:= \frac{1}{4}\cdot F(x_0,\theta x_0+\theta^2x_1+\theta x_2, x_2).
\]
Its special fiber $\X_{0,s}$ is the quartic with equation
\[
   \bar{F}_1 = x_0^4 +  x_0x_1^2x_2 + x_0^2x_2^2 + x_2^4.
\]
It is a reduced and irreducible curve with three singularities:
\begin{itemize}
\item
  the node $P_0=(0:1:0)$, with tangent cone $x_0x_2$, and
\item 
  two cusps, $P_1=(\bar{\zeta}:0:1)$ and $P_2=(\bar{\zeta}+1:0:1)$, with tangent cones $(x_0+\bar{\zeta}x_1)^2=0$ and $(x_0+(\bar{\zeta}+1)x_1)^2=0$, respectively. 
\end{itemize}
Corollary \ref{cor:contraction} predicts that, if we choose the right extension $L/K$, there exists a modification
\[
     \pi:\X\to\X_0,
\]
which is a certain weighted blow-up in the two cusps $P_1,P_2$, such that $\X$ is the stable model of $X_L$. The morphism $\pi$ contracts the $1$-tails $C_1,C_2\subset\X_s$ to the cusps $P_1,P_2\in\X_{0,s}$.
Using our implementation, we can compute a suitable extension $L/K$ and the two $1$-tails, as follows. Sage performs the computations over number fields equipped with a unique $2$-adic valuation; we interpret the output as taking place over the strict completion - in line with our standing hypothesis of algebraically closed residue field.

\medskip
\begin{Verbatim}[breaklines=true, breakanywhere=true]
  sage: from semistable_model.curves.stable_reduction_of_quartics import stable_reduction_of_quartic
  sage: R.<x0,x1,x2>=QQ[]
  sage: F = x1^4 + 2*x0^3*x2 + x0*x1^2*x2 + 2*x0*x2^3
  sage: SR = stable_reduction_of_quartic(F, QQ.valuation(2))
\end{Verbatim}

Now {\tt SR} is an object containing information on the stable reduction of the quartic $X$. For instance, we can retrieve the special fiber and the field of definition of the GIT-stable model $\X_0$.

\medskip
\begin{Verbatim}[breaklines=true, breakanywhere=true]
  sage: SR.git_special_fiber
  Projective Plane Curve with defining polynomial x1^4 + (z2 + 1)*x0^2*x1*x2 + z2*x1^3*x2 + z2*x0^2*x2^2 + (z2 + 1)*x1^2*x2^2 + (z2 + 1)*x2^4 over Finite Field in z2 of size 2^2
  sage: SR.git_extension
  Number Field in a1 with defining polynomial x^4 - 2*x^3 + x^2 - 6*x + 9
\end{Verbatim}

So $\X_0$ is defined over the ring of integers of a quartic extension $L_0/\QQ_2$, with ramification index $2$ and inertia degree $2$. The extension of residue fields is made so that the two cusps $P_1$, $P_2$ become rational points of the special fiber $\X_{0,s}$:

\medskip
\begin{Verbatim}[breaklines=true, breakanywhere=true]
  sage: P1, P2 = SR.tail_data.keys()    # the two cusps
  sage: P1
  (z2 : 1 : 1)
  sage: P2
  (z2 + 1 : z2 : 1)
\end{Verbatim}

\noindent
For each cusp $P\in\X_{0,s}$, its {\em tail datum} is a tuple $v_L,t,E,e$, where 
\begin{itemize}
\item
  $v_L$ is the $p$-adic valuation on a field extension $L_1/L_0$ necessary to resolve the cusps $P$ (see \S \ref{subsec:explicit_cusp_resolution}),
\item 
  $t$ is the thickness of the node connecting the tail to the core (equivalently, the parameter $t=t_{\max}(x,y)$ from \eqref{eq:t_max}),
\item 
  $E$ is the tail component, and
\item 
  $e$ is a positive integer, such that the resolution of the cusp $P$ is defined over any extension $L/L_1$ with ramification index $e$. 
\end{itemize}
Here is what happens at the cusp $P_1$:

\medskip
\begin{Verbatim}[breaklines=true, breakanywhere=true]
  sage: v_L, t, E, e = SR.tail_data[P1]  # the tail datum for P1
  sage: v_L.domain()
  Number Field in alpha with defining polynomial a^8 + (8*a1^3 + 8*a1 + 16)*a^7 + (8*a1^3 + 8*a1^2 + 16*a1 - 8)*a^6 + 16*a1^2*a^5 + (8*a1^2 + 32*a1 - 12)*a^4 + (48*a1^3 - 16*a1 + 64)*a^3 + (16*a1^3 + 40*a1 + 60)*a^2 + (-24*a1^3 - 48*a1^2 + 8*a1 + 32)*a + 56*a1^3 - 16*a1 + 60 over its base field
  sage: t
  1/6
  sage: E
  Projective Plane Curve over Finite Field of size 2 defined by x^3 + y^2*z + y*z^2
  sage: e
  3
\end{Verbatim}

This shows that we can resolve the cusp $P_1$ over a totally ramified extension $L/L_0$ of degree $24$. It is constructed as a relative extension $L/L_1/L_0$ such that the admissible coordinate system $(x,y)$ from Theorem \ref{thm:explicit_cusp_resolution} is defined over $L_1$ and such that $L$ has an element $\Pi$ with valuation $t=1/6$. The tail component $C_1$ is the cubic
\begin{equation} \label{eq:C_1}
   C_1:\: x_1^3 + x_0^2x_2 + x_0x_2^2 = 0.
\end{equation}

Since the involution $\iota_2$ swaps the two cusps, it is sufficient to resolve one of them (see also the proof of Proposition \ref{prop:monodromy} below). The resolution of $P_2$ can therefore be defined over the same extension $L/L_0$, and the tail component $C_2$ is isomorphic to $C_1$.

\smallskip
All together, we see that the stable reduction of the quartic $X$ has combinatorial type  {\tt 0nee} (Proposition \ref{prop:classification_g=3}):
\smallskip
\begin{Verbatim}[breaklines=true, breakanywhere=true]
  sage: SR.reduction_type
  '0nee' 
\end{Verbatim}
See Figure \ref{fig:contraction_special_fiber} for a schematic representation of the contraction map. 

\begin{figure}
    \centering
    
    \newsavebox{\topFiberBox}
    \newsavebox{\bottomFiberBox}
    
    \sbox{\topFiberBox}{\resizebox{4cm}{!}{\HobbyCurve{0nee}}}
    \sbox{\bottomFiberBox}{\resizebox{4cm}{!}{\HobbyCurve{0ncc}}}

    \begin{tikzpicture}[node distance=1.8cm] 
        
        % --- Top Panel: Source Fiber ---
        \node (topNode) [inner sep=0pt] {\usebox{\topFiberBox}};
        \node [right=0.3cm of topNode] {$\mathcal{X}_s$};

        % Labels für C1 und C2
        \node [font=\small] at ([xshift=-1.0cm, yshift=1.1cm]topNode.center) {$C_1$};
        \node [font=\small] at ([xshift=1.0cm, yshift=1.1cm]topNode.center) {$C_2$};

        % --- Bottom Panel: Target Fiber ---
        \node (bottomNode) [below=of topNode, inner sep=0pt] {\usebox{\bottomFiberBox}};
        \node [right=0.3cm of bottomNode] {$\mathcal{X}_{0,s}$};

        % Labels für P1 und P2
        \node [font=\small] at ([xshift=-1.1cm, yshift=0.25cm]bottomNode.center) {$P_1$};
        \node [font=\small] at ([xshift=1.1cm, yshift=0.25cm]bottomNode.center) {$P_2$};

        % --- Contraction Morphism Arrow ---
        \draw[->, thick] ([yshift=-0.3cm]topNode.south) -- node[midway, right=0.1cm] {$\pi_s$} ([yshift=0.5cm]bottomNode.north);

    \end{tikzpicture}
    \caption{The special fiber of the contraction morphism} \label{fig:contraction_special_fiber}
\end{figure}

\medskip

The following proposition shows that $L/L_0$ is the minimal extension of $L_0$ over which $X$ has semistable reduction. (Recall that we consider $L,L_0,\ldots$ as finite extensions of $K=\QQ_2^\nr$.)

\begin{proposition} \label{prop:monodromy}
  The extension $L/L_0$ is a totally ramified Galois extension, with Galois group $\Gal(L/L_0)\cong\SL_2(3)$ of order $24$. It acts faithfully on $\X_s$, fixing all three components $C_0,C_1,C_2$. It acts trivially on $C_0$. For $i=1,2$, the action on $C_i$ induces an isomorphism
  \[
       \Gal(L/L_0)\cong\Aut_k(C_i,x_i).
  \]	
\end{proposition}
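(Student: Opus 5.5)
The plan is to combine the uniqueness of the stable model with the fact that $C_1$ is supersingular in characteristic $2$. By \eqref{eq:C_1} the tail $C_1$ has equation $y^2z+yz^2=x^3$, with $j$-invariant $0$ in characteristic $2$. So $C_1$ is the supersingular elliptic curve, and $\Aut_k(C_1,x_1)\cong\SL_2(3)$ has order $24$, where $x_1$ is the origin, i.e.\ the attachment point.

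\textbf{Galois action.} Since $\X$ is the stable model of $X_L$ and is unique, the tautological action of $G:=\Gal(\tilde L/L_0)$ extends to $\X$ for a Galois closure $\tilde L$. We get an action on $\X_s$ by $k$-automorphisms, since $k$ is algebraically closed and the extension is totally ramified, hence acts trivially on $k$.

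\textbf{Components.} $G$ fixes $C_0$, the unique component of genus $\geq 1$ in the core. The action on $C_0$ is compatible with the contraction $\X\to\X_0$ of Theorem~\ref{thm:contraction}. The model $\X_0$ is already defined over $L_0$, so $G$ acts trivially on $\X_{0,s}$. Since $C_0\to\X_{0,s}$ is birational, $G$ acts trivially on $C_0$. In particular $G$ fixes the points $P_1,P_2$, hence the tails $C_1,C_2$ and the attachment points $x_1,x_2$.

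\textbf{Faithfulness and the isomorphism.} This is the heart of the argument. We get homomorphisms $\rho_i\colon G\to\Aut_k(C_i,x_i)$. I would show that $\rho_1$ is injective using the standard criterion (Serre--Tate / Raynaud style): the stable model is already defined over the fixed field $L_0'$ of $\ker\rho$ of the full action on $\X_s$. More precisely, an element of inertia acting trivially on the stable reduction acts trivially on $\X$. Otherwise quotienting by it would produce a semistable model over a smaller field, contradicting the minimality of $L$. Minimality in turn needs an independent argument: the explicit resolution of Theorem~\ref{thm:explicit_cusp_resolution} requires an element $\Pi$ with $v(\Pi)=t=1/6$, forcing ramification divisible by $3$ over $L_0$ (whose index is $2$). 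The quaternion part of order $8$ should come from the extension $L_1/L_0$ of degree $8$ needed for admissible coordinates; the minimality of $L_1$ I expect to read off from the computation.

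Cleaner, and what I would try first: the faithful action of $G$ on $\X_s$ is forced by general theory. The Galois action on the $\ell$-adic Tate module of the Jacobian factors through the action on the stable reduction (Deligne--Mumford). Inertia acts through a finite quotient; the minimal splitting extension is exactly the kernel field. The toric part coming from the node of $C_0$ is unipotent, and $C_0$ has trivial action, so this finite inertia quotient embeds into $\Aut(C_1,x_1)\times\Aut(C_2,x_2)$.

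\textbf{Conclusion.} Since $\iota_2$ is defined over $K$ and swaps $C_1,C_2$, conjugation by it identifies $\rho_1$ and $\rho_2$. So $\ker\rho_1=\ker\rho_2$, and the image of $G$ in the product is the graph of an isomorphism, giving $G\hookrightarrow\SL_2(3)$. Since $[L:L_0]=24=|\SL_2(3)|$ by the computation, and $L$ is the minimal field, $G\cong\SL_2(3)$ and $\rho_i$ is an isomorphism.

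\textbf{Galois property.} $L/L_0$ is Galois because it equals the fixed field of the kernel of the monodromy representation, which is canonical. The main obstacle is justifying minimality of $L$, i.e.\ that the computed degree-$24$ extension is not larger than the monodromy field. I would settle it by the order count above together with surjectivity of $\rho_1$. Surjectivity follows if the wild part is nontrivial: by Serre--Tate, the characteristic-$2$ supersingular tail with $3\mid e$ and non-abelian monodromy forces the full $\SL_2(3)$.
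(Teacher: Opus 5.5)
\textbf{There is a genuine gap: you get the upper bound $G\hookrightarrow\SL_2(3)$ but not the matching lower bound $\abs{G}=24$.}

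Your second half follows the paper's proof. $G$ acts trivially on $\X_{0,s}$ because $\X_0$ is defined over $L_0$, hence trivially on $C_0$. So $G$ fixes both tails and embeds into $\Aut_k(C_1,x_1)\times\Aut_k(C_2,x_2)$. The involution $\iota_2$ makes this embedding diagonal, so $G\hookrightarrow\SL_2(3)$. That is only an upper bound. The missing lower bound is that $L/L_0$ is the \emph{minimal} extension over which the resolution of $P_1$ exists, equivalently that the monodromy group has order $24=[L:L_0]$.

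Both your conclusion (``since $L$ is the minimal field'') and your Galois argument (``$L$ equals the fixed field of the kernel'') assume this. The degree count alone gives nothing, because the algorithm's extension can be larger than the monodromy field. The paper notes right after the proposition that this actually happens for $L/K$: degree $48$, monodromy of order $24$.

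Your partial substitutes do not close the gap:
\begin{itemize}
\item The thickness argument is sound if you phrase it intrinsically. The thickness $1/6$ is an invariant of the stable model, so every field of semistable reduction containing $L_0$ has ramification index over $L_0$ divisible by $3$. But this only gives $3\mid\abs{G}$, and $\SL_2(3)$ has the proper subgroups $C_3$ and $C_6$ of such order.
\item ``Reading off'' the minimality of $L_1$ from the computation is not an argument.
\item The appeal to Serre--Tate presupposes non-abelian monodromy, which is exactly what has to be shown. For a supersingular $j=0$ tail in characteristic $2$, smaller inertia images are a priori possible.
\end{itemize}

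The paper closes the gap with an input from the companion paper. The computed resolution of $P_1$ is strongly rigidified in the sense of \cite[Definition~5.6]{cusp_resolution}, because the exceptional cubic \eqref{eq:C_1} is in strong rigidified Weierstrass normal form. Then \cite[Theorem~5.9 and Proposition~5.8]{cusp_resolution} certify that $L/L_0$ is the minimal extension over which the local stable resolution of $P_1$ is defined. Equivalently, the monodromy action on $C_1$ is faithful. Only with this certificate does $\abs{H_1}=24=[L:L_0]$ force $G=H_1$ and $\tilde L=L$. To repair your proof you need this result, or an explicit determination of the action of $\Gal(\tilde L/L_0)$ on $C_1$ showing that its image has order $24$.
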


\begin{proof}
The local stable resolution of the cusp $P_1$ which produced the extension $L/L_0$ is {\em strongly rigidified} in the sense of
\cite[Definition~5.6]{cusp_resolution}: indeed, the exceptional cubic
\eqref{eq:C_1} is in strong rigidified Weierstrass normal form (\cite[Definition 2.5]{cusp_resolution}).  Hence
\cite[Theorem~5.9, together with Proposition~5.8]{cusp_resolution}, applied
to the cusp \(P_1\) over the base field \(L_0\), shows that \(L/L_0\) is the
minimal extension over which the local stable resolution of \(P_1\) is defined.
Equivalently, the induced monodromy action on the tail \(C_1\) is faithful.
By the involution \(\iota_2\), the same statement holds for the cusp \(P_2\).
	
Let $\tilde{L}/L$ denote the Galois closure of $L/L_0$, and let  $G\subset\Aut_k(\X_s)$ denote the image of $\Gal(\tilde{L}/L_0)$ under the monodromy action. The action of $G$ on $\X_s$ commutes with the contraction map $\X_s\to\X_{0,s}$. Since the GIT-stable model $\X_0$ was originally defined over $L_0$, $G$ acts trivially on $\X_{0,s}$ and hence trivially on $C_0$. It also follows that $G$ fixes the two tails $C_1,C_2$. We obtain an embedding
\begin{equation} \label{eq:group_embedding}
    G\hookrightarrow H_1\times H_2,
\end{equation}
where $H_i:=\Aut_k(C_i,\tilde{P}_i)$. 

The involution $\iota_2$ on $X$ induces an involution $\iota_{2,s}$ of $\X_s$ which commutes with the action of $G$. It is easy to check that $\iota_{2,s}$ swaps the two tails $C_1,C_2$. It follows that \eqref{eq:group_embedding} becomes a diagonal embedding, once we identify $H_1$ and $H_2$ via the isomorphism induced by $\iota_{2,s}$, and $G$ with a subgroup of $H_1$ via the restriction of its action to $C_1$. 

It is well known that the automorphism group of the cubic \eqref{eq:C_1} is a group of order $24$, isomorphic to $\SL_2(3)$ (it is the supersingular elliptic curve with $j=0$ in Characteristic $2$, see e.g.\ \cite[Theorem III.10.1 and Exercise 3.12]{SilvermanAEC}). In particular this order is equal to the degree of $L/L_0$. We conclude that $G=H_1$ and that $L/L_0$ is already a Galois extension, i.e. $\tilde{L}=L$. Now everything is proved.
\end{proof}

Even though the extension $L/L_0$ is minimal with the property that $X$ has semistable reduction over $L$, this is not true for the extension $L/K$. Indeed, $L/K$ has degree $48$ which is larger than the order of the monodromy group. It follows that there exists a subextension $L'\subset L$ of index $2$ over which $X$ has semistable reduction. This is a drawback of our general approach: in the first step we need some extension over which we have a GIT-stable model. But there is no natural candidate, so the extension that we find will often be disjoint from the minimal extension over which the curve has semistable reduction.

\subsection{Random experiments}
\label{subsec:random-experiments}

The preceding subsection treated a single curve in detail.  We now report on
larger-scale computations for random smooth plane quartics over \(\mathbb{Q}\),
with respect to the \(p\)-adic valuations for \(p=2,3,5\).  The aim is to demonstrate that
the implementation can handle many examples and to record the stable reduction
types observed in these experiments.

For each run, the output directory contains the
per-sample records, the aggregate statistics and the run parameters.  Thus the
tables below can be checked directly from the archived data files. For instructions on how to reproduce the data, see \href{https://github.com/swewers/quartic_reduction_data/tree/main#}{the README file of the repository}.

\begin{table}[ht]
	\centering
	\small
	\begin{tabular}{c|c|c|c}
		Run & \(p\) & output directory & code commit \\ \hline
		A & \(2\) &
		\href{https://github.com/swewers/quartic_reduction_data/tree/55b34e4c7fe958575829aa9eeb3b893f34483e5c/p2/p2_500_22-5-2026}{\texttt{p2\_500\_22-5-2026}} &
		\href{https://github.com/swewers/StabilityFunction/tree/2841d56b5d8f71e076de5999d79db37f8e7272d0}{\texttt{2841d56b5d8f}} \\
		B & \(3\) &
		\href{https://github.com/swewers/quartic_reduction_data/tree/55b34e4c7fe958575829aa9eeb3b893f34483e5c/p3/p3_500_22-5-2026}{\texttt{p3\_500\_22-5-2026}} &
		\href{https://github.com/swewers/StabilityFunction/tree/2841d56b5d8f71e076de5999d79db37f8e7272d0}{\texttt{2841d56b5d8f}} \\
		C & \(5\) &
		\href{https://github.com/swewers/quartic_reduction_data/tree/55b34e4c7fe958575829aa9eeb3b893f34483e5c/p5/p5_500_22-5-2026}{\texttt{p5\_500\_22-5-2026}} &
		\href{https://github.com/swewers/StabilityFunction/tree/2841d56b5d8f71e076de5999d79db37f8e7272d0}{\texttt{2841d56b5d8f}}
	\end{tabular}
	\caption{Archived random-experiment runs.  The output directories contain the corresponding
		\texttt{data.jsonl}, \texttt{stats.json}, and \texttt{run\_parameters.json}
		files.}
	\label{tab:random-runs}
\end{table}

In all three runs we used the same sampling parameters: \(500\) smooth
quartics, \(8\) randomly chosen monomials, coefficient bound \(10\), seed
\(55496\), and no \(p\)-adic coefficient bias.  Only the prime \(p\) was varied.

The status counts are summarized in Table~\ref{tab:random-status}.  Here
\texttt{ok} means that the computation produced a stable reduction type.  The
status \texttt{hyperelliptic} means that the computation reached the strictly
semistable GIT case; by Theorem~\ref{thm:contraction}, this is precisely the
case where the present method does not proceed to a stable model.  The status
\texttt{fail} records an exception raised during the computation, while
\texttt{timeout} records a computation which exceeded the prescribed time
limit, here \(120\,\mathrm{s}\) per quartic.

\begin{table}[ht]
	\centering
	\small
	\begin{tabular}{c|c|c|c|c|c|c}
		Run & \(p\) & smooth quartics & \texttt{ok} & \texttt{hyperelliptic}
		& \texttt{fail} & \texttt{timeout} \\ \hline
		A & \(2\) & \(500\) & \(295\) & \(189\) & \(0\) & \(16\) \\
		B & \(3\) & \(500\) & \(422\) & \(60\) & \(1\) & \(17\) \\
		C & \(5\) & \(500\) & \(465\) & \(35\) & \(0\) & \(0\)
	\end{tabular}
	\caption{Status counts for the random experiments.}
	\label{tab:random-status}
\end{table}

\FloatBarrier

The only \texttt{fail} entry was caused by a \emph{PARI stack overflow}.
Together with the \(33\) timeouts, this appears to be due to coefficient swell in
intermediate computations.  These cases reflect limitations of the current
implementation rather than additional geometric reduction types, and could
likely be handled by further optimization of the code.

Among the successful computations, we recorded the stable reduction type using
the notation of Proposition~\ref{prop:classification_g=3}.  The distribution is
shown in Table~\ref{tab:random-reduction-types}.  We only include samples with
status \texttt{ok} in this table.

\begin{table}[h]
	\centering
	\small
	\begin{tabular}{c|r|r|r|r}
		reduction type & Run A & Run B & Run C & total \\ \hline
		\texttt{0----0} & 7 & 3 & 0 & 10 \\
		\texttt{0---0e} & 38 & 26 & 2 & 66 \\
		\texttt{0---0m} & 2 & 0 & 0 & 2 \\
		\texttt{0---0n} & 0 & 6 & 6 & 12 \\
		\texttt{0eee} & 0 & 19 & 0 & 19 \\
		\texttt{0mee} & 0 & 1 & 0 & 1 \\
		\texttt{0nee} & 2 & 0 & 0 & 2 \\
		\texttt{0nne} & 0 & 2 & 0 & 2 \\
		\texttt{0nnn} & 0 & 2 & 3 & 5 \\
		\texttt{1---0} & 42 & 37 & 31 & 110 \\
		\texttt{1ee} & 25 & 5 & 0 & 30 \\
		\texttt{1me} & 1 & 0 & 0 & 1 \\
		\texttt{1ne} & 3 & 7 & 8 & 18 \\
		\texttt{1nn} & 8 & 11 & 16 & 35 \\
		\texttt{2e} & 26 & 49 & 37 & 112 \\
		\texttt{2m} & 1 & 2 & 1 & 4 \\
		\texttt{2n} & 61 & 94 & 92 & 247 \\
		\texttt{3} & 77 & 146 & 264 & 487 \\
		\texttt{BRAID} & 0 & 0 & 1 & 1 \\
		\texttt{CAVE} & 2 & 12 & 4 & 18 \\ \hline
		total & 295 & 422 & 465 & 1182
	\end{tabular}
	\caption{Stable reduction types among the successful random examples.}
	\label{tab:random-reduction-types}
\end{table}

\FloatBarrier

These computations give a practical stress test for the two-step procedure
described in \S\ref{subsec:explicit_cusp_resolution}: first compute a
GIT-semistable plane model, and then, in the GIT-stable case, resolve the cusps
to obtain the stable model. The observed frequencies depend on the sampling
parameters and should therefore not be interpreted as intrinsic densities in
moduli.  They nevertheless provide a reproducible collection of examples,
including the raw equations and the computed reduction data.

All computations reported in this subsection were carried out with SageMath
10.8, using Python 3.12.3, and the version of \texttt{StabilityFunction}
specified in Table~\ref{tab:random-runs}. The computations were run on a laptop with an Intel Core i7-10510U processor
and 16\,GiB RAM, under Ubuntu~24.04.4~LTS.

\vspace{2ex}\noindent
{\bf Data and code availability statement:}
The data generated and analyzed in Section~\ref{subsec:random-experiments}
are available at
\url{https://github.com/swewers/quartic_reduction_data}.  The repository contains
the raw output files, aggregate statistics, and run parameters for the random
experiments.  The computations were carried out using the SageMath package
\texttt{StabilityFunction}, available at
\url{https://github.com/swewers/StabilityFunction}; the precise code commit is
recorded in Table~\ref{tab:random-runs}.

\vspace{2ex}\noindent
{\bf Conflict of interest statement:}
The authors declare that they have no conflict of interest.

\bibliographystyle{plain}
\bibliography{references}

\medskip
{\small
\noindent
\textit{Address:}
Institute of Algebra and Number Theory,
Ulm University,
Helmholtz\-strasse 18,
89081 Ulm, Germany

\smallskip

\noindent
\textit{Email addresses:}
\texttt{max.schwegele@uni-ulm.de},
\texttt{kletus.stern@uni-ulm.de},\\
\texttt{stefan.wewers@uni-ulm.de}

}

\end{document}